\newtheorem{theorem}{Theorem}
\newtheorem{proposition}{Proposition}
\newtheorem{lemma}{Lemma}
\newtheorem{definition}{Definition}
\newtheorem{corollary}{Corollary}
\newcommand{\ga}{\alpha}
\newcommand{\gb}{\beta}
\newcommand{\gc}{\gamma}
\newcommand{\gd}{\delta}
\newcommand{\gl}{\lambda}
\newcommand{\gs}{\sigma}
\newcommand{\hess}{\nabla^2}
\newcommand{\gra}{\nabla}
\newcommand{\de}{\partial}
\newcommand{\bpf}{\begin{proof}}
\newcommand{\epf}{\end{proof}}
\newcommand{\beq}{\begin{equation}}
\newcommand{\eeq}{\end{equation}}
\begin{document}
\title{Conformal Deformation on Manifolds \\
 with Boundary}
\vskip 1em
 \author{Szu-yu Sophie Chen
 \footnote{The author was supported in part by the Miller Institute for Basic Research in Science.}}
 \date{}
\maketitle
\begin{abstract}
We consider natural conformal invariants arising from the
Gauss-Bonnet formulas on manifolds with boundary, and study
conformal deformation problems associated to them.
 \end{abstract}

The purpose of this paper is to study 
 conformal deformation problems associated to conformal
invariants on manifolds with boundary.  From analysis point of view, the problem 
becomes a non-Dirichlet boundary value problems for fully nonlinear equations.
This may be  compared   to a work by Lieberman-Trudinger \cite{LieT86} on the oblique-type boundary value problems.

Let $(M, g)$ be a compact, connected Riemannian manifold of
dimension $n \geq 3$ with boundary $\de M$. We denote the
Riemannian curvature, Ricci curvature, scalar curvature, mean
curvature, and the second fundamental form by $Riem, Ric, R , h$,
and $L_{\alpha \beta},$ respectively.

 The Yamabe constant for compact manifolds with boundary is a conformal
  invariant, defined as
 $$Y(M, \de M, [g]) = \inf_{\hat{g} \in [g], V_{\hat g} = 1}  ( \int_M R_{\hat{g}} +
 \oint_{\partial M} h_{\hat{g}} ),$$
 where $[g]$ is the conformal class of $g.$ 
  It was proved by Escobar \cite{Es92} that for
most compact manifolds with boundary, the Yamabe problem is solvable; i.e., there exists
a conformal metric  such that the scalar
curvature is constant and the mean curvature is zero.

To study a nonlinear version of the Yamabe problem, we consider
the Schouten tensor defined as
$$ A_g =\frac{1}{n-2} ( Ric - \frac{R}{2(n-1)} g ) .$$
 The problem  consists in
 finding a
 metric $\hat{g}= e^{-2u} g$  such that the $\gs_k(A_{\hat {g}})$ curvature
 is constant, where $\gs_k$ is the $k$th elementary symmetric
 function of the eigenvalues of $A_{\hat{g}}.$ When $k= 1,$
  the problem reduces to the original
 Yamabe problem.

 In dimension four, the $\gs_2 (A_g)$ curvature is related
 to the Gauss-Bonnet formula
  and  $\int_M \gs_2 (A_g)$ is a conformal
invariant on closed manifolds. Chang-Gursky-Yang
\cite{CGY02}, \cite{CGY02a} proved that if the Yamabe constant
$Y(M, [g])$ and $\int_M \gs_2 (A_g)$ are both positive, then we
can find a conformal metric $\hat g$ such that $\gs_2(A_{\hat g})$
is a positive constant; see also  \cite{GV03}.
For locally conformally flat closed manifolds,
 Li-Li \cite{LL03} and Guan-Wang  \cite{GW03a} proved that
  if $\gs_i(A_g) > 0$, for $1 \leq i \leq k,$ then we can
  find a conformal metric $\hat g$ such that $\gs_k(A_{\hat g})$ is
  constant. When $2k > n,$ the result was generalized by Gursky-Viaclovsky \cite{GV05}
   to non locally conformally flat closed manifolds;
   see also Trudinger-Wang \cite{TrWx05}.  Other  related works include
   Guan-Lin-Wang \cite{GLW04}, Ge-Wang \cite{GeW05} and Sheng-Trudinger-Wang \cite{STW07}.


 Let $M$ be a four-manifold  with boundary. The Gauss-Bonnet formula is 
 \beq \label{e:cgb}
  32 \pi^2 \chi (M, \partial M) = \int_M |\mathcal{W}|^2 + 16 (\int_M
  \gs_2(A_g) + \frac{1}{2} \oint_{\de M} \mathcal{B}_g),
  \eeq where $\mathcal{B}_g
   = \frac{1}{2} Rh - R_{nn} h - R_{\gc \ga \gc \gb} L^{\ga \gb} +
 \frac{1}{3} h^3 - h |L|^2+ \frac{2}{3} tr L^3,$
 and  $\int_M \gs_2 + \frac{1}{2} \oint_{\de M}
 \mathcal{B}_g$ is a conformal invariant. 
 We have the following existence result.
 Recall that the boundary $\de M$ is called umbilic if  $L_{\ga \gb} = \mu(x)
g_{\ga \gb},$ which is a conformal invariant condition.

 \begin{theorem}  \label{t:main}
 Let $(M, g)$ be a compact connected four-manifold with umbilic
 boundary. If $Y(M, \de M, [g])$ and
 $\int_M \gs_2 (A_g) + \frac{1}{2} \oint_{\de M}  \mathcal{B}_g$ are both positive,
 then there exists a metric $\hat{g} \in [g]$ such
 that  $\gs_2(A_{\hat g})$  is a positive constant and
  $\mathcal{B}_{\hat g}$ is zero.
 \end{theorem}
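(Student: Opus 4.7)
First, by \cite{Es92} and the positivity $Y(M, \de M, [g]) > 0$, I would fix $g_0 \in [g]$ with $R_{g_0}$ a positive constant and $h_{g_0} = 0$. The umbilic condition is conformally invariant, so $L_{g_0, \ga\gb} = \mu g_{0, \ga\gb}$; combined with $h_{g_0} = 3\mu = 0$ this forces $L_{g_0} \equiv 0$. Substituting $h = 0$ and $L = 0$ into the defining formula for $\mathcal{B}$ gives $\mathcal{B}_{g_0} \equiv 0$, and the conformal invariance of $\int_M \gs_2(A_g) + \tfrac{1}{2}\oint_{\de M}\mathcal{B}_g$ then yields $\int_M \gs_2(A_{g_0}) > 0$.

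Second, the key observation is that, starting from $g_0$ with $L_{g_0} = 0$, any conformal change $\hat g = e^{-2u}g_0$ satisfying the Neumann condition $\de_\nu u = 0$ on $\de M$ gives
\[ L_{\hat g, \ga\gb} = -e^{-u}(\de_\nu u)\, g_{0,\ga\gb} = 0, \qquad h_{\hat g} = -3 e^u \de_\nu u = 0, \]
so $\mathcal{B}_{\hat g} \equiv 0$ automatically. Hence the proof reduces to the Neumann boundary value problem: find $u$ with $\gs_2(A_{\hat g})$ equal to a positive constant in $M$ and $\de_\nu u = 0$ on $\de M$. This converts the originally oblique, second-order boundary condition $\mathcal{B}_{\hat g} = 0$ into a first-order condition, which I expect to be considerably easier to handle analytically.

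The next step is to solve this Neumann problem by a continuity method along the path
\[ \gs_2(A_{g_t}) = f_t \text{ in } M, \qquad \de_\nu u_t = 0 \text{ on } \de M, \]
joining an admissible starting metric $g_1 \in [g]$ (with $A_{g_1} \in \Gamma_2^+$ pointwise and $\de_\nu u_1 = 0$) to the target constant. Openness would come from the implicit function theorem within the oblique boundary framework of \cite{LieT86}; closedness from a priori $C^k$ estimates up to the boundary together with Evans-Krylov regularity. Constructing $g_1$ would adapt the Chang-Gursky-Yang scheme \cite{CGY02} to the boundary setting, using $R_{g_0} > 0$ and $\int_M \gs_2(A_{g_0}) > 0$ in a regularization argument that preserves the Neumann condition.

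The principal difficulty will be the a priori $C^2$ boundary estimate for the fully nonlinear $\gs_2$-equation with Neumann boundary condition: the pure second normal derivative $\de_{\nu\nu}u$ is not controlled directly by the boundary data, so one must combine the admissibility-cone ellipticity developed by Guan-Wang \cite{GW03a} with boundary barrier constructions in the spirit of \cite{LieT86}. A second, more geometric obstacle is to verify that the regularization used to produce the admissible starting metric $g_1$ respects the Neumann condition throughout the limit; this is where the umbilic hypothesis is essential.
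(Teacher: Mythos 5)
Your reduction to a Neumann problem is exactly the paper's strategy: after passing to a Yamabe metric $g_0$ with totally geodesic boundary (umbilicity plus $h = 0$ forces $L = 0$), any conformal factor with $\partial_\nu u = 0$ keeps the boundary totally geodesic, and Lemma~\ref{l:b=0} then gives $\mathcal{B}_{\hat g} = 0$ for free. Your two-step plan --- first produce an admissible metric with $A \in \Gamma_2^+$ and totally geodesic boundary, then deform to prescribed $\gs_2$ --- also matches the paper's split into Propositions~\ref{p:pos} and~\ref{p:f}. So the overall architecture is right. However, there are two genuine gaps.

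The first, and most serious, is that you never address the $C^0$ a priori bound from below. In the continuity/degree argument one must rule out $\inf_M u \to -\infty$ along the deformation, and this is where the hypothesis $\int_M \gs_2 + \tfrac12 \oint \mathcal{B}_g > 0$ and the positivity of the Yamabe constant actually enter quantitatively. The paper's proof of this bound is a blow-up analysis: rescaling around a minimum point (with separate treatment of interior versus boundary minima) produces a limit solution on $\mathbb{R}^4$ or $\overline{\mathbb{R}}^4_+$, which by the Chang--Gursky--Yang uniqueness theorem must be the spherical metric, forcing $\int_M \gs_2 \geq 2\pi^2$; but Lemma~\ref{l:conf} shows $\int_M \gs_2 + \tfrac12 \oint \mathcal{B}_g \leq 2\pi^2$ with equality only for the standard hemisphere, giving the contradiction (or the trivial case). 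Without this rigidity argument, your continuity method cannot be closed. You also need an upper bound on $u$ and a Harnack-type estimate, but those are comparatively routine.

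The second gap is your plan for the boundary $C^2$ estimate. You propose constructing barriers à la Lieberman--Trudinger. The paper explicitly does not do this: it states that no barrier is built, and instead derives a uniform lower bound on the third normal derivative $u_{nnn}$ on the boundary (in terms of $\Delta u$), which forces the maximum of a suitable weighted test function $\eta(\Delta u + |\nabla u|^2 + \cdots)e^{a x_n}$ to occur in the interior, where local interior techniques apply. Whether a barrier construction of the Lieberman--Trudinger type could be made to work for this non-divergence-form, fully nonlinear Neumann problem is unclear; their structure conditions are not obviously satisfied by the $\gs_2$-equation with the conformal nonlinearity, and that uncertainty is precisely why the paper develops the third-derivative technique. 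Your proposal leaves this as the central unresolved difficulty, so as written the proof does not go through.

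A minor point: for the admissible starting metric the paper uses the Gursky--Viaclovsky continuity path $A^t = A + \tfrac{1-t}{2}(\operatorname{tr} A)\, g$ with $t$ running from $-\Theta$ (where $A^{-\Theta}$ is positive definite) to $1$, solving the $\gs_2$-Neumann problem along the way; calling this a ``regularization'' is imprecise, but the idea is close enough.
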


 We will prove a more general result than
 Theorem~\ref{t:main}.
 \begin{theorem} \label{t:main'}
 Let $(M,g)$ be a  compact connected four-manifold with umbilic
 boundary. Suppose that $(M, g)$ is not
  conformally equivalent to $(\mathbb{S}^+_4, g_c),$ where $g_c$ is the
  standard metric on the hemisphere. If  $Y(M, \de M, [g])$ and
 $\int_M \gs_2 + \frac{1}{2} \oint_{\de M} \mathcal{B}_g$ are both positive, then
 given a positive function $f$, there exists a metric $\hat{g} \in [g]$ such
 that  $\gs_2(A_{\hat g}) = f $  and
  $\mathcal{B}_{\hat g}$ is zero.
\end{theorem}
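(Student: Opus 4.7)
The plan is to run a continuity method inside the positive $\gs_2$-cone $\Gamma_2^+$, paralleling the closed-manifold strategy of Chang-Gursky-Yang, using the umbilic hypothesis to reduce the boundary equation $\mathcal{B}_{\hat g}=0$ to a linear Neumann condition. Since umbilicity is conformally invariant, along any deformation $\hat g = e^{-2u}g$ the boundary stays umbilic, $L^{\hat g}_{\ga\gb} = \mu_{\hat g}\,g^{\hat g}_{\ga\gb}$. Substituting $L = \mu g$ into the formula for $\mathcal{B}_g$ and collecting terms yields
\begin{equation*}
\mathcal{B}_{\hat g} = \mu_{\hat g}\cdot Q_{\hat g}
\end{equation*}
for an explicit $Q_{\hat g}$ in ambient and boundary curvatures, so the branch of $\mathcal{B}_{\hat g}=0$ selected by continuity from a totally geodesic background is $\mu_{\hat g}=0$, equivalently $h_{\hat g}=0$. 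Under the conformal change law for mean curvature this translates into $\de_\nu u = h_g/3$, which reduces to the pure Neumann condition $\de_\nu u = 0$ once the background is totally geodesic.

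To produce such a background I would invoke Escobar's boundary Yamabe theorem, using $Y(M,\de M,[g])>0$, to obtain $g_0 \in [g]$ with $R_{g_0}$ a positive constant and $h_{g_0}=0$; umbilicity then forces $L_{g_0}\equiv 0$. Using the positivity of $\int_M\gs_2 + \frac{1}{2}\oint_{\de M}\mathcal{B}_g$ together with a Chang-Gursky-Yang-type argument, run on the doubled manifold $M\cup_{\de M}M$ (naturally closed and $C^{1,1}$ via the totally geodesic doubling), I would then deform $g_0$ further within $[g]$ into $\Gamma_2^+$ pointwise while keeping $\de M$ totally geodesic. The continuity family
\begin{equation*}
\gs_2(A_{u_t}) = t f + (1-t)\gs_2(A_{g_0}),\qquad \de_\nu u_t = 0,
\end{equation*}
admits $u_0\equiv 0$ at $t=0$, and openness at any $u_t\in\Gamma_2^+$ follows from the uniform ellipticity of the linearization of $\gs_2$ on $\Gamma_2^+$ combined with the standard solvability of the linearized Neumann problem.

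Closedness demands uniform $C^{2,\ga}$ bounds up to $\de M$. Interior $C^1$ and $C^2$ estimates follow from the fully nonlinear Yamabe theory developed by Guan-Wang and Chang-Gursky-Yang. The boundary $C^1$ and $C^2$ bounds I would obtain by even reflection: the Neumann condition guarantees that the even extension of $u_t$ across the totally geodesic $\de M$ is a $C^{1,1}$ weak solution of the same fully nonlinear equation on the doubled closed four-manifold, so interior estimates on the double descend to boundary estimates on $M$ (alternatively one may work directly in the oblique framework of Lieberman-Trudinger \cite{LieT86}). The $C^0$ estimate is the crux: a concentration-compactness argument aims to show that the only admissible blow-up profile, after doubling, is the standard round $(\mathbb S^4,g_c)$, which corresponds precisely to the excluded hemisphere $(\mathbb S^+_4,g_c)$, so the exclusion hypothesis rules out blow-up and delivers a uniform $C^0$ bound.

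The main technical obstacle is this $C^0$/blow-up step. It requires a sharp integral inequality controlling a suitable conformal energy by the Gauss-Bonnet-Chern invariant $\int_M\gs_2 + \frac{1}{2}\oint_{\de M}\mathcal{B}$, which plays the role of $\int\gs_2$ from the closed case, together with an Obata-type classification of critical solutions on the hemisphere with totally geodesic boundary; both must be established in this boundary setting before the closed-manifold machinery transfers. Once the $C^0$ bound is in place, the reflection argument delivers the higher estimates and the continuity method closes at $t=1$, producing the desired metric $\hat g$ with $\gs_2(A_{\hat g})=f$ and $\mathcal{B}_{\hat g}=0$.
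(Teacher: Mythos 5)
Your outline matches the paper's architecture---Escobar's Yamabe metric to make $\partial M$ totally geodesic, a Gursky--Viaclovsky deformation to reach $A\in\Gamma_2^+$ while keeping $h=0$, then a continuity/degree argument for $\gs_2(A_{\hat g})=f$ with Neumann condition $\de_\nu u=0$, with the hemisphere exclusion used to rule out blow-up. But two load-bearing steps are left as gaps or rest on a device that does not actually deliver what you need.

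First, the reflection argument for the boundary $C^2$ estimate does not go through as stated. If $L\equiv 0$, the even extension of $g$ across $\partial M$ in Fermi coordinates is $C^{2}$ but in general no better than $C^{2,1}$: the condition $L=0$ kills $\de_n g_{\ga\gb}|_{\partial M}$ but not $\de_n^3 g_{\ga\gb}|_{\partial M}$. Hence $A_{\bar g}$ on the double is merely Lipschitz across $\partial M$, and the solution extends only to a $C^{1,1}$ function---far short of the $C^3$/$C^4$ regularity of the background and solution that the interior $C^2$ machinery for $\gs_k$-type equations (Chen \cite{Chen05}, Guan--Wang \cite{GW03a}) actually requires, since those estimates differentiate the equation and the Schouten tensor. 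So the doubled manifold does not hand you the needed estimates for free. The paper avoids doubling entirely: it proves boundary $C^1$ and $C^2$ estimates directly in Fermi coordinates (Theorems~\ref{t:bdy} and \ref{t:bdy1}), by first showing that a suitable third normal derivative of $u$ is controlled on $\partial M$, which forces the maximum of a test quantity into the interior. Your parenthetical ``work in the Lieberman--Trudinger framework'' is closer in spirit to what is actually done, but it is not a substitute for carrying it out.

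Second, you correctly identify the crux---a sharp integral inequality with equality characterization plus a Liouville classification of the blow-up limit---but you neither prove it nor notice that the paper's route is lighter than a boundary Obata theorem. The sharp inequality is Lemma~\ref{l:conf}: $\int_M\gs_2+\tfrac12\oint_{\partial M}\mathcal B\le 2\pi^2$ with equality only for $(\mathbb S_4^+,g_c)$, which follows from Escobar's extremal estimate $Y(M,\partial M,[g])\le Y(\mathbb S_4^+,\mathbb S^3,[g_c])$ evaluated at the Yamabe metric. For the classification of blow-up limits, the reflection to $\mathbb R^4$ happens only at the level of the blow-up limit on the flat half-space $\overline{\mathbb R}^4_+$ with $\de_\nu u=0$, where even extension is exact, and then the Chang--Gursky--Yang uniqueness theorem on $\mathbb R^4$ applies unchanged; no new Obata-type result with boundary is needed. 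Without these two ingredients your proof does not close; with the reflection-on-the-manifold device replaced by genuine boundary estimates and the two ingredients supplied, it essentially becomes the paper's argument.
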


 An application of above theorem  to  Einstein
 manifolds is given in  Section \ref{subs:ccem}.

  For general $k$, we define  suitable boundary curvatures
 and  show variational properties of $\gs_k.$
      Let  $A^T = [A_{\ga \gb}]$
  be the tangential part of the Schouten tensor.
  Define
  \beq \label{e:b2}
    \mathcal{B}^2 = \left\{  \begin{array}{ll}
    \frac{2}{n-2} \gs_{2,1} (A^T, L)+
  \frac{2}{(n-2)(n-3)}\gs_{3,0} (A^T, L) &\, n \geq 4\\
   2 \gs_{2,1} (A^T, L)+ \frac{1}{3} h^3 - \frac{1}{2}h |L|^2
    & \, n=3,
  \end{array}\right.
  \eeq
  where $\gs_{i, j}$'s are the mixed symmetric functions;
  see Section~\ref{s:backgd}.
 For $k \geq 3,$ we define
 \beq \label{e:bk}
 \begin{array}{ll}
  \mathcal{B}^k = \sum_{i=0}^{k-1} C_1(n, k, i) \gs_{2k-i-1, i} (A^T, L)
  & \, n\geq 2k,
  \end{array}
  \eeq
   where $C_1 (n, k, i) = \frac{(2k-i-1)!(n-2k+i)!}{(n-k)!(2k-2i-1)!!\,i!}$
     and $!!$ stands for the double factorial.  When the boundary is
  umbilic, we define
 \beq \label{e:bk-u}
 \mathcal{B}^k = \sum_{i=0}^{k-1} C_2(n, k, i) \gs_i (A^{T})
\mu^{2k-2i-1} \eeq
 for all $n,$ where $C_2(n,k, i) = \frac{(n-i-1)!}{(n-k)! (2k-2i-1)!!}.$
  In Section~\ref{s:backgd}, we will show that the above two definitions of
  $\mathcal{B}^k$ coincide when the boundary is umbilic.

  Let
  $ \mathcal{F}_k (g) = \int_M \gs_k (A) + \oint_{\de M} \mathcal{B}^k$
   and $\mathcal{M} = \{g: g\in [g_0], V_g = 1\}.$
 \begin{theorem}\label{t:variation}
  Let $(M, g_0)$ be a compact manifold of dimension $n \geq 3$ with boundary.\\
 (a) Suppose $n \neq 4.$ Then $g$ is a critical point of $\mathcal{F}_2 \mid_\mathcal{M}$
     if and only if $g$  satisfies\\
 \hspace*{15pt}  $\gs_2 (A_g)= \text{constant}$  in  $M$
 with $  \mathcal{B}^2_g = 0$ on $\partial M.$\\ 
  (b) Suppose $n > 2k$ and $M$ is a locally conformally flat compact manifold.
   Then $g$ is a \\
   \hspace*{15pt} critical point of $\mathcal{F}_k \mid_\mathcal{M}$
     if and only if $g$  satisfies
    $\gs_k (A_g)= \text{constant}$  in  $M$
 with $  \mathcal{B}^k_g = 0$\\
 \hspace*{15pt} on $\partial M.$\\
   (c) The statement of (b) is true for all $n \neq 2k$ if we
   assume in addition that the boundary\\
   \hspace*{15pt} is umbilic.
 \end{theorem}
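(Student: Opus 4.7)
The plan is to compute the first variation of $\mathcal{F}_k$ along a conformal deformation $g_t = e^{-2tu} g_0$ and to reduce it to the single formula
$$\frac{d}{dt}\bigg|_{t=0} \mathcal{F}_k(g_t) = (2k-n)\left(\int_M u\,\sigma_k(A_g)\, dV_g + \oint_{\partial M} u\,\mathcal{B}^k_g\, dA_g\right).$$
Since $2k \ne n$ under each of the three hypotheses, the theorem then follows from a standard Lagrange multiplier argument: testing first against $u$ compactly supported in the interior of $M$ with $\int u\,dV = 0$ forces $\sigma_k(A_g)$ to be constant, and testing afterwards against $u$ with arbitrary values on $\partial M$ forces $\mathcal{B}^k_g = 0$ pointwise.

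For the bulk piece I would combine the standard conformal variation formulas for the Schouten tensor and the volume element with the Newton transformation $T_{k-1}(A)$ to obtain
$$\frac{d}{dt}\bigg|_{t=0}\int_M \sigma_k(A_{g_t})\,dV_{g_t} = (2k-n)\int_M u\,\sigma_k\,dV + \int_M \mathrm{div}\bigl(T_{k-1}(A)\nabla u\bigr)\,dV + \mathcal{E},$$
where $\mathcal{E}$ collects contributions involving $\nabla^j T_{k-1}(A)_{ij}$ and Weyl-tensor corrections. For (a) the contracted second Bianchi identity gives $\nabla^j T_1(A)_{ij} = 0$ and no Weyl correction arises at the $k=2$ level, so $\mathcal{E}=0$. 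For (b), local conformal flatness kills the Weyl corrections and yields Viaclovsky's identity $\nabla^j T_{k-1}(A)_{ij} = 0$, again giving $\mathcal{E}=0$. For (c) the surviving Weyl-type contribution reduces to a boundary integrand proportional to the trace-free part of $L$, which vanishes under umbilicity. The divergence theorem then converts the remaining interior term into $\oint_{\partial M} T_{k-1}(A)_{ij}\nu^j\nabla^i u\,dA$, where $\nu$ is the outward unit normal.

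Decomposing $\nabla u$ into tangential and normal components and using Gauss--Codazzi to rewrite the normal and mixed components of $T_{k-1}(A)$ in terms of $A^T$ and $L$ splits this boundary integral into a piece linear in $u|_{\partial M}$ and a piece linear in $\partial_\nu u$. I would then compute the first variation of $\oint_{\partial M}\mathcal{B}^k_{g_t}\,dA_{g_t}$ using the conformal laws $\hat L_{\alpha\beta} = e^{-u}(L_{\alpha\beta} + g_{\alpha\beta}\partial_\nu u)$, $\widehat{A^T}$ transforming as the ambient Schouten restricted to the boundary, and $\widehat{dA} = e^{-(n-1)u}\,dA$, and verify that the $\partial_\nu u$ contributions cancel while the $u$ contributions reassemble precisely into $(2k-n)\,u\,\mathcal{B}^k$. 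The main obstacle lies in this last matching step: one has to check that the weights $C_1(n,k,i)$ and $C_2(n,k,i)$ appearing in \eqref{e:bk}--\eqref{e:bk-u} are exactly those forced by the cancellation, which reduces to Newton-type identities for the mixed symmetric polynomials $\sigma_{p,q}(A^T, L)$. On an umbilic boundary $L = \mu\,g^T$ collapses each $\sigma_{2k-1-i,i}(A^T,L)$ to a multiple of $\sigma_i(A^T)\,\mu^{2k-1-2i}$, so the equivalence of \eqref{e:bk} and \eqref{e:bk-u} becomes a binomial identity, and this same collapse is what allows the umbilic hypothesis in (c) to take over the role played by local conformal flatness in (b).
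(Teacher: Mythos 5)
Your overall strategy matches the paper's: compute the first variation of $\mathcal{F}_k$ along a conformal path, integrate the bulk term $T_{k-1}(A)^{ij}\nabla_i\nabla_j u$ by parts to produce a boundary flux $\oint T_{k-1}(A)^n_j\nabla^j u$, show that this flux is exactly absorbed by the variation of $\oint\mathcal{B}^k$, and finish with a Lagrange-multiplier argument using $n\neq 2k$. You also correctly locate the real computational burden in the matching step; the paper resolves it by expressing things through $\sigma_{q,r}(A^T,L)$ and showing that the sums of the coefficients $C_1(n,k,i)$ and $C_2(n,k,i)$ telescope so that the boundary flux reduces, via Lemma~\ref{l:Tij}(c),(d), to $-T_{k-2}(A^T)^{\alpha}_{\beta}A^n_{\alpha}\phi^{\beta}+\sigma_{k-1}(A^T)\phi_n = T_{k-1}(A)^n_j\phi^j$.

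The genuine gap is in your reading of case (c). You assert that umbilicity ``takes over the role played by local conformal flatness,'' with the obstruction surviving only as a boundary term proportional to the trace-free part of $L$. That is not what happens, and the step would fail for $k\geq 3$: the bulk integration by parts requires $\nabla^j T_{k-1}(A)_{ij}=0$ throughout $M$, and that needs the Cotton tensor to vanish (Lemma~\ref{l:divergence}(a)); umbilicity of $\partial M$ says nothing about this interior divergence, so there is no leftover that could ``reduce to a boundary integrand.'' In (c) local conformal flatness is still assumed; what umbilicity buys is something different, namely the alternative definition~(\ref{e:bk-u}) of $\mathcal{B}^k$. Definition~(\ref{e:bk}) involves $\sigma_{2k-i-1,i}(A^T,L)$ on the $(n-1)\times(n-1)$ boundary tensors and hence only makes sense for $n\geq 2k$, whereas~(\ref{e:bk-u}) makes sense for all $n$; the two agree when both are defined. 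It is precisely this that relaxes $n>2k$ in (b) to $n\neq 2k$ in (c), not a trade of local conformal flatness for umbilicity.
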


  If we add local conformal invariants to $\mathcal{B}^k,$ similarly  we have:
 \begin{corollary}\label{c:variation}
  Suppose $\mathcal{L}$ is a curvature tensor on $\de M$
  satisfying $\mathcal{L}(\hat g) = e^{(2k-1)u}
  \mathcal{L}(g).$ Then under the same conditions as in Theorem~\ref{t:variation},
  $g$ is a critical point of $(\mathcal{F}_k+ \oint
  \mathcal{L})\mid_\mathcal{M}$  if and only if $g$ satisfies
  $ \gs_k (A_g)= \text{constant}$ in $M$  with
  $\mathcal{B}^k_g + \mathcal{L}= 0$ on $\partial M.$
 \end{corollary}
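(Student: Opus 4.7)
\noindent\textbf{Proof plan for Corollary~\ref{c:variation}.} The strategy is to replay the variational argument behind Theorem~\ref{t:variation} and simply track the additional contribution of $\oint_{\de M}\mathcal{L}$. All of the delicate calculus involving $\gs_k$ and $\mathcal{B}^k$ has already been absorbed into Theorem~\ref{t:variation}, so the only new input required will be a one-line scaling computation made possible by the hypothesis on $\mathcal{L}$.

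Concretely, I would take a volume-preserving conformal deformation $g_t = e^{-2tv}g$ (so that $\int_M v\,dv_g = 0$) and begin by recalling from the proof of Theorem~\ref{t:variation} the structural identity
$$\frac{d}{dt}\bigg|_{t=0}\mathcal{F}_k(g_t) \;=\; (2k-n)\Big(\int_M v\,\gs_k(A_g)\,dv_g + \oint_{\de M} v\,\mathcal{B}^k_g\,d\sigma_g\Big).$$
The overall factor $(2k-n)$ appears in both terms because $\mathcal{F}_k$ is conformally invariant in dimension $n=2k$, which is precisely the property that the definition of $\mathcal{B}^k$ was arranged to enforce. I would then differentiate the new term using the assumed transformation laws $\mathcal{L}(g_t) = e^{(2k-1)tv}\mathcal{L}(g)$ and $d\sigma_{g_t} = e^{-(n-1)tv}d\sigma_g$, obtaining
$$\frac{d}{dt}\bigg|_{t=0}\oint_{\de M}\mathcal{L}(g_t)\,d\sigma_{g_t} \;=\; \big((2k-1)-(n-1)\big)\oint_{\de M} v\,\mathcal{L}\,d\sigma_g \;=\; (2k-n)\oint_{\de M} v\,\mathcal{L}\,d\sigma_g.$$
Crucially, this contribution carries the same factor $(2k-n)$ as the $\mathcal{B}^k$ boundary term above, so the two boundary terms merge cleanly.

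Adding the variations and introducing a Lagrange multiplier $\lambda$ for the unit-volume constraint, the Euler--Lagrange equations of $(\mathcal{F}_k + \oint_{\de M}\mathcal{L})\mid_\mathcal{M}$ reduce to $(2k-n)\gs_k(A_g) + n\lambda = 0$ in $M$ and $(2k-n)(\mathcal{B}^k_g + \mathcal{L}) = 0$ on $\de M$. Since the hypotheses of Theorem~\ref{t:variation} force $n\neq 2k$, the common factor cancels and one obtains $\gs_k(A_g) = \text{constant}$ together with $\mathcal{B}^k_g + \mathcal{L} = 0$, which is the claim. I do not expect any genuine obstacle beyond Theorem~\ref{t:variation} itself; the only point to be careful about is that the coefficient $(2k-n)$ is really common to the $\mathcal{B}^k$ and $\mathcal{L}$ boundary contributions, but this is forced by the matching of the conformal weights $(2k-1)$ of the two boundary integrands against the weight $(n-1)$ of $d\sigma$.
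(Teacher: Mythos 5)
Your proposal is correct and follows essentially the same route as the paper: differentiate the extra term $\oint\mathcal{L}\,d\Sigma$ using the given conformal weight $(2k-1)$ of $\mathcal{L}$ together with the weight $-(n-1)$ of the boundary area form, observe that these combine to the same factor $(2k-n)$ that Theorem~\ref{t:variation} produces for $\int\gs_k$ and $\oint\mathcal{B}^k$, and cancel the nonzero factor $2k-n$. The paper phrases this with an explicit Lagrange multiplier term $-\Lambda\int dV$ rather than restricting to variations with $\int_M v=0$, but this is a cosmetic difference and your argument is sound.
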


  For closed manifolds, Theorem~\ref{t:variation} was proved by
  Viaclovsky \cite{Via00}. He also showed that $\mathcal{F}_{\frac{n}{2}}$ is a conformal
 invariant associated to the Gauss-Bonnet formula.  We will show a generalization
of this fact for manifolds with boundary in Section~\ref{s:conf-inv} (Proposition~\ref{p:gb}).


 We  study the problem of 
 finding a conformal metric $\hat g$ such that $\gs_k (A_{\hat g})$
 is constant and $\mathcal{B}^k_{\hat g}= 0.$
 For $k=2$ and $n=4,$ Theorem~\ref{t:main} shows that
 the problem is solvable under some conformal invariant conditions because
 when the boundary is umbilic,  $\mathcal{B}= 2 \mathcal{B}^2.$ 
 We remark that the boundary condition we find here in general involves second derivatives, which is highly nonlinear. Such 
 boundary condition is rare in the literature.

 We introduce some definitions and then   state the result for general $k.$
 Let $W$ be a matrix with eigenvalues $\gl_1,\cdots,\gl_n.$
 For $k \leq n,$   $\gs_k (W) =$\; $ \sum_{i_1<\cdots< i_k} \gl_{i_1}\gl_{i_2}\cdots\gl_{i_k}$
  is called the
kth elementary symmetric function of the eigenvalues of $W$. 
 The set  $\Gamma^+_k = \{ \gl \,: \gs _i (\gl)> 0 , 1 \leq i \leq k \}$
 is  called the positive
$k$-cone, which is  is an open convex cone with vertex at the origin  \cite{Gar59}. 
Now  we can define   higher order Yamabe constants
 for  manifolds with boundary.  When $\{\hat g: \hat g \in [g], A_{\hat{g}} \in \Gamma_{k-1}^+\}$ is nonempty,
 let $\mathcal{Y}_k = \inf  \mathcal{F}_k (\hat g)$
     where $\inf$ is taken over metrics $g \in [g]$ with $A_{\hat{g}} \in \Gamma_{k-1}^+$ and $V_{\hat g} = 1.$
   When $\{\hat g: \hat g \in [g], A_{\hat{g}} \in \Gamma_{k-1}^+\}= \emptyset,$ let $\mathcal{Y}_k = - \infty.$
   We  denote $\mathcal{Y}_1 = Y(M, \de M, [g]).$ 
   For closed manifolds,  $\mathcal{Y}_k$ was defined by Guan-Lin-Wang \cite{GLW04}. 
   For locally conformally flat
   closed manifolds,  Guan-Lin-Wang \cite{GLW04} proved that if $\mathcal{Y}_k >
   0$ and $2k \leq n,$ there exists $\hat g \in [g]$ such that $\gs_k (A_{\hat g})=1.$
   For manifolds with boundary, we have:
 \begin{theorem}\label{t:lcf-inv} Let $(M, g)$ be a locally conformally flat compact manifold of dimension
 $n \geq 3$ with umbilic boundary. Suppose that $2k \leq n$ and
 $\mathcal{Y}_1, \cdots, \mathcal{Y}_k > 0.$ Then there exists a metric
 $\hat g \in [g]$ such that $\gs_k (A_{\hat g})= 1$ and $\mathcal{B}^k_{\hat g} =0$.
 \end{theorem}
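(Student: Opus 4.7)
The plan is to prove Theorem~\ref{t:lcf-inv} by the continuity method, modeled on the closed-manifold argument of Guan-Lin-Wang \cite{GLW04}, with a priori estimates adapted to the nonlinear oblique boundary condition $\mathcal{B}^k_{\hat g}=0$. As a starting metric, the assumption $\mathcal{Y}_1>0$ combined with Escobar's theorem produces a background metric $g_0\in[g]$ with $h_{g_0}=0$; since $\partial M$ is umbilic, $h=0$ forces the full second fundamental form $L$ to vanish, so $\partial M$ is totally geodesic. At a totally geodesic boundary every summand of $\mathcal{B}^k$ in (\ref{e:bk-u}) carries a positive power of $\mu$, hence vanishes automatically. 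Using the remaining positive Yamabe-type hypotheses together with the conformal invariance of $\mathcal{Y}_i$, one can further deform $g_0$ (via a conformal factor with vanishing normal derivative, preserving $h=0$) to arrange $A_{g_0}\in\Gamma_k^+$.

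Next I would set up a one-parameter family
\[
\sigma_k(A_{g_t})^{1/k}=c_t\,\psi_t,\qquad \mathcal{B}^k_{g_t}=0,\qquad V_{g_t}=1,\quad t\in[0,1],
\]
interpolating between an auxiliary problem solved by $g_0$ at $t=0$ and $\sigma_k(A_{\hat g})=1$ at $t=1$. Openness along the path follows from Fredholm theory for fully nonlinear oblique BVPs in the spirit of Lieberman-Trudinger \cite{LieT86}: the linearization of $\sigma_k$ is elliptic on $\Gamma_k^+$, and the linearization of $\mathcal{B}^k$ at an umbilic boundary, after substituting the interior equation to eliminate second normal derivatives, is a first-order operator transverse to $\partial M$. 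Crucially, umbilicity is preserved under conformal change, so the formulation (\ref{e:bk-u}) of $\mathcal{B}^k$ persists along the path.

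The heart of the argument is closedness, i.e.\ uniform $C^{2,\alpha}$ estimates up to $\partial M$. The $C^0$ bound uses the positivity of $\mathcal{Y}_1,\dots,\mathcal{Y}_k$ along the path: local conformal flatness permits a developing-map reduction to a flat model domain, where a moving-plane type argument bounds the conformal factor. Interior $C^1$ and $C^2$ estimates for $\sigma_k$-equations in $\Gamma_k^+$ are by now classical. The main obstacle is the boundary $C^2$ estimate for the fully nonlinear oblique BVP $\mathcal{B}^k_{\hat g}=0$. The tangential-tangential and tangential-normal second derivatives of the conformal factor can be controlled by differentiating the boundary equation along $\partial M$ and exploiting that, by umbilicity, $\mathcal{B}^k$ depends only on $\sigma_i(A^T)$ and $\mu$; the double-normal second derivative then requires a barrier construction combining the umbilic structure with the local conformal flatness to extract the needed obliqueness. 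Once these estimates are in place, boundary Evans-Krylov theory for oblique BVPs upgrades them to $C^{2,\alpha}$, closedness holds along the path, and the continuity method delivers the desired metric at $t=1$.
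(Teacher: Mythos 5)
Your outline contains the right opening move (take a Yamabe metric $g_0$ with $R_{g_0}>0$ and totally geodesic boundary, and exploit Lemma~\ref{l:b=0} so that the seemingly nonlinear constraint $\mathcal{B}^k=0$ reduces to the linear Neumann condition $\partial u/\partial n=0$ once $A\in\Gamma_k^+$), but the proposal has two genuine gaps.

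First, you treat the step ``further deform $g_0$ \dots\ to arrange $A_{g_0}\in\Gamma_k^+$'' as routine. In the paper this is the \emph{entire content} of the proof of Theorem~\ref{t:lcf-inv}: one proves inductively, for $m=2,\dots,k$, that a conformal metric with $A\in\Gamma_{m-1}^+$ and totally geodesic boundary can be upgraded to one with $A\in\Gamma_m^+$, using the deformation $A^t_{m-1}=A+\tfrac{1-t}{2}\sigma_{m-1}^{1/(m-1)}(A)g$ for $t\in[-\Theta,1]$ and the path of Neumann problems (\ref{e:lcf-pos}). At $t=-\Theta$ the identity solves the problem and $A^{-\Theta}_{m-1}\in\Gamma_m^+$; openness is the invertibility of the linearized operator; closedness requires a $C^0$ estimate whose lower bound comes precisely from the positivity of $\mathcal{Y}_m$ via a volume-integral argument (integrate the equation, drop nonnegative terms in $\sigma_m(\hat g^{-1}\hat A^t_{m-1})$, use $\mathcal{B}^k=0$). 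Your proposal collapses this to a one-sentence assertion; without the inductive mechanism there is no reason the continuity path you write down has a starting point in the admissible cone. Once this inductive step is done, the paper does not set up a continuity path from $\sigma_k=c_t\psi_t$ to $\sigma_k=1$ at all: it simply cites \cite{Chen05a}, which handles exactly the situation of a totally geodesic boundary with $A\in\Gamma_k^+$.

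Second, the analytic scaffolding you propose does not match what actually works here. Because the boundary condition becomes $\partial u/\partial n=0$ (not a fully nonlinear oblique condition), there is no need for a Lieberman--Trudinger oblique Fredholm framework, a developing-map/moving-plane $C^0$ estimate, or a barrier construction for $u_{nn}$. The $C^0$ upper bound comes from the maximum principle at an interior or boundary max (where $\partial u/\partial n=0$ kills the normal derivative), the $C^0$ lower bound from the $\mathcal{Y}_m>0$ integral inequality above, the gradient bound from interior gradient estimates extended to the boundary, and the boundary $C^2$ bound from Theorem~\ref{t:bdy}(a), which is proved not by barriers but by showing the third normal derivative is controlled so that the auxiliary function achieves its maximum in the interior. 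Replacing these with a moving-plane argument and a barrier is not merely stylistically different: the moving-plane machinery in \cite{GLW04} is for closed manifolds and is not what carries the half-space Neumann problem here, and you give no indication of how your barrier would produce the needed obliqueness given that $\mathcal{B}^k$ is of curvature order $2k-1$ in the boundary metric.
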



 Proofs of Theorem~\ref{t:main}, \ref{t:main'} and \ref{t:lcf-inv} turn out  by solving some boundary value problems
  for fully nonlinear equations. Under the conformal change of
the metric $\hat g = e^{-2u}g$, the Schouten tensor $\hat A$ satisfies
 \beq \label{e:schten}
 \hat A= \hess u + du \otimes du - \frac{1}{2}|\gra u|^2 g +
 A_g.
 \eeq
   The second fundamental form satisfies
 $\hat L e^{u}= \frac{\de u}{\de n} g + L_g,$
 where $n$ is the
 unit inner normal. 
  When the boundary is umbilic,
 the  formula becomes
  $ \hat \mu e^{-u} = \frac{\de u}{\de n} + \mu_g.$
  We will show in Section~\ref{s:backgd} that when $A_g \in
 \Gamma^+_k$ and when the boundary is umbilic, then $\mathcal{B}^k_g = 0$ if and only if
 $h_g = 0.$ 
 Thus, the  problem becomes solving
 \begin {equation} \label {e:sigma}\left\{  \begin{array}{ll}
 \sigma_k^{\frac{1}{k}} (\hess u + du \otimes du - \frac{1}{2}|\gra u|^2 g +
 A_g)= e^{-2u} & in \, M  \\
 \frac{\de u}{\de n} + \mu_g = 0& on \,\partial M.
  \end{array}\right .
  \end {equation}

 We will prove boundary estimates for equations more general than
 (\ref{e:sigma}). We use Fermi 
 coordinates in a boundary neighborhood.
  Define the half ball  by $\overline{B}_r^+ =
  \{ x_n \geq 0, \sum_i x_i^2 \leq r^2\}$ and the segment on the
  boundary by $\Sigma_r =  \{ x_n = 0, \sum_i x_i^2 \leq r^2\}.$
  Let $f(x,z) : M^n \times \mathbb R \rightarrow \mathbb R^+.$
 Consider the equation
\begin {equation} \label {e:symm}\left\{  \begin{array}{ll}
 F(\hess u + du \otimes du - \frac{1}{2}|\gra u|^2 g +
 S(x))=  f(x, u) & in \, \overline {B}^+_r  \\
 \frac{\de u}{\de n} + \mu_g = \hat \mu\, e^{-u}& on \,\Sigma_r,
  \end{array}\right .
  \end {equation}
 where $F$ satisfies some structure conditions
 as we describe now.
 Let $\Gamma$ be an open convex cone in $\mathbb{R}^n$ with vertex at the origin
 satisfying $\Gamma^+_n \subset \Gamma \subset \Gamma^+_1.$
 Suppose that $F(\gl)= F(\gs_1(\gl),\cdots,\gs_n(\gl)) \in C^{\infty}(\Gamma) \cap C^0(\overline{\Gamma})$
 is a homogeneous symmetric
 function of degree one normalized with $F(e) = F(1,\cdots,1) = 1.$
 Assume that $F = 0$ on $\de \Gamma$ and $F$ satisfies the following in $\Gamma:$

 (S0) $F$ is positive;\par
 (S1) $F$ is concave (i.e., $\frac{\de^2 F}{\de \gl_i \de
 \gl_j}$ is negative semi-definite);\par
 (S2) $F$ is monotone (i.e., $\frac{\de F}{\de \gl_i}$ is
 positive);\par
 (S3) $\frac{\de F}{\de \gl_i}\geq \epsilon \frac{F}{ \gs_1},$ for some constant
     $\epsilon > 0,$ for all $i.$ \par
 In some case, we need an additional condition: \\ 
 \hspace*{15pt} (A) $\sum_{j \neq i} \frac{\de F}{\de \gl_j} \leq \rho \frac{\de F}{\de \gl_i},$
      for some $\rho > 0,$ for all $\gl \in \Gamma$ with $\gl_i \leq
      0.$\\
 It was shown in \cite{Chen05a} that $\binom{n}{k}^{-\frac{1}{k}}\, \gs_k ^{\frac{1}{k}}$
  satisfies the structure conditions (S0)-(S3) and (A) in
  $\Gamma^+_k$ with $\epsilon = \frac{1}{k}$ and  $\rho = (n-k).$

 We assume that $S(x)$ satisfies the following conditions on the boundary:

 (T0) $S_{\ga n} = \mu_{\ga};$ \par
 (T1) $S_{\ga \gb} + S_{nn} g_{\ga \gb} \leq  R_{\ga n \gb n};$
 \par
 (T2) $S_{\ga \gb, n} - 2 \mu S_{\ga \gb} \leq
  \mu_{\tilde{\ga} \tilde{\gb}}- R_{\ga n \gb n} \mu,$\\
  where $\mu_{\tilde{\ga} \tilde{\gb}}$ means covariant
  derivatives of $\mu$ with respect to the induced metric $g_{\ga \gb}$
  on the boundary.

  Denote

 $c_{inf}(r) = \inf_{x \in \overline {B}^+_r} f(x,u);$ \par
 $c_{sup}(r) = \sup_{x \in \overline {B}^+_r} ( f+ | \gra_x f(x, u)|+ |f_z (x, u)|+
    | \hess_x f(x, u)| + |\gra_x f_z (x, u)| +
 |f_{z z} (x, u)|).$

 \begin{theorem} \label{t:bdy}
 Let $F$ satisfy (S0)-(S3) in a corresponding cone $\Gamma$
 and $S(x)$ satisfy (T0)-(T2) on $\Sigma_r$.
 Suppose that $|\gra_x f| \leq \Lambda f$ and $|f_z| \leq \Lambda
 f$ for some number $\Lambda,$ and
 $\Sigma_r$ is umbilic with principal curvatures $\mu.$
 Suppose $u \in C^4$ is a solution to the equation (\ref{e:symm}).\\
 Case(a). If $\hat \mu = 0,$ then
 $$\sup_{x \in \overline{B}^+_{\frac{r}{2}}} \,( |\gra u|^2 + |\hess u| ) \leq
   C ,$$
  where $C= C(r, n, \epsilon, \mu, \Lambda,
   \|S\|_{C^2 (\overline {B}^+_r)}, \|g\|_{C^3}, c_{sup} (r))$ \\
 Case(b).  Suppose that $F$ satisfies
 the additional condition (A) and $\Gamma_2^+ \subset \Gamma.$
  If $\hat \mu$ is a positive constant, then
  $$\sup_{x \in \overline{B}^+_{\frac{r}{2}}} \,( |\gra u|^2 + |\hess u| ) \leq
   C, $$
  where $C = C(r, n, \epsilon, \rho, \mu, \hat \mu, \Lambda,
  \|S\|_{C^2 (\overline {B}^+_r)}, \|g\|_{C^3}, \inf_{\overline {B}^+_r} u,
  c_{sup} (r)).$
 \end{theorem}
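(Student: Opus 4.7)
The plan is to establish the gradient and Hessian bounds separately, each via a maximum-principle argument on a test function equipped with a cutoff $\eta\geq 0$ supported in $\overline{B}^+_r$; the interior portion of each argument is essentially the standard estimate from \cite{Chen05a}, so the real content is to handle contributions on $\Sigma_r$. Throughout I would work in Fermi coordinates near $\Sigma_r$, in which $\de_n$ is the inward unit normal, umbilicity gives $\Gamma^\gb_{\ga n} = -\mu\,\gd_{\ga\gb}$, and the oblique condition reads $u_n = \hat\mu\,e^{-u} - \mu_g =: \varphi(x,u)$.

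For the gradient bound, take $G = \eta(x)\,|\gra u|^2\, e^{\phi(u)}$ with a monotone $\phi$ chosen (in case (b)) to absorb $e^{-u}$. If the maximum of $G$ lies in the interior of $\overline{B}^+_r$, differentiate $F(W)=f$ once and use (S0)--(S3) to contradict $F^{ij}G_{ij}\geq 0$ unless $|\gra u|$ is bounded. If the maximum lies on $\Sigma_r$, the Hopf inequality $\de_n G\leq 0$ combined with the tangentially differentiated boundary condition $u_{\ga n} = -\mu\, u_\ga + \varphi_\ga + \varphi_z u_\ga$ and the boundary value $u_n=\varphi$ reduces the normal term $u_n u_{nn}$ to an already controlled expression (invoking $\inf u$ in case (b)), again yielding a contradiction.

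For the Hessian bound, maximize
\[
H(x,\xi) = \eta(x)\,W(\xi,\xi)\,e^{\psi(|\gra u|^2)}, \qquad W=\hess u + du\otimes du - \tfrac{1}{2}|\gra u|^2 g + S,
\]
over $x\in\overline{B}^+_r$ and unit $\xi$. An interior maximum is handled as in the closed case: differentiate $F(W)=f$ twice, use (S1) for the concavity term and (S3) to dominate the gradient-type remainders, and absorb lower-order terms via the Step 1 bound. On $\Sigma_r$ the analysis splits by the normal component of $\xi$. If $\xi$ is tangential, then $W_{\ga\ga}=u_{\ga\ga}+\mu u_n + u_\ga^2 - \tfrac{1}{2}|\gra u|^2 + S_{\ga\ga}$, so $\de_n W_{\ga\ga}$ involves $u_{\ga\ga n}$; commuting derivatives ($u_{\ga\ga n}=u_{n\ga\ga}+R_{\ga n \ga k}u_k$) and substituting via the twice-tangentially-differentiated boundary condition collects the uncontrolled second-order terms into an expression which (T0)--(T2) are designed precisely to push in the direction violating Hopf's lemma, forcing an a priori bound on $W_{\ga\ga}$. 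If $\xi$ has a normal component, $W_{\ga n}$ is controlled by one tangential derivative of the boundary condition, and $W_{nn}$ is recovered from $F(W)=f$ once the other eigenvalues of $W$ are bounded, using the monotonicity (S2).

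The main obstacle lies in the tangential subcase in case (b): twice-tangentially differentiating $u_n=\hat\mu e^{-u}-\mu_g$ produces a term proportional to $-\hat\mu e^{-u} u_{\ga\ga}$ whose sign cannot be fixed a priori, and this is where condition (A) together with $\Gamma_2^+\subset\Gamma$ enters. The inequality $\sum_{j\neq i}F^{jj}\leq \rho F^{ii}$ whenever $\gl_i\leq 0$ allows one to absorb the bad contribution into the dominant $F^{nn}W_{nn}$ term extracted from the equation. Making this absorption quantitative and choosing the auxiliaries $\psi$ and $\eta$ so that all constants balance is the delicate computational heart of the argument.
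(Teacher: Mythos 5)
The central technical device of the paper's proof is missing from your outline, and this creates a genuine gap at the boundary. The paper does \emph{not} perform a Hopf-type argument at a boundary maximum of the test function. Instead it differentiates the PDE in the normal direction at an arbitrary boundary point, using $F^{\ga n}=0$ (from $W_{\ga n}=0$, Lemma~\ref{l:S-bdy}) together with (T0)--(T2) and (S3), to obtain a pointwise lower bound of the form $u_{nnn}\geq -L\,\Delta u + 3\mu\, u_{nn} - C$ on all of $\Sigma_r$. Feeding this into $\de_n$ of the test function $H=\eta(\Delta u + |\gra u|^2 + n\mu\, u_n)e^{a x_n}$ shows $H_n>0$ everywhere on $\Sigma_r$ once $a$ is chosen large, so the maximum is forced into the interior \emph{before} any maximum-principle computation. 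Your proposal never differentiates the PDE normally, so you have no access to $u_{nnn}$.

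This matters already in your Step 1. You propose a stand-alone gradient estimate with $G=\eta|\gra u|^2 e^{\phi(u)}$ and a Hopf inequality $\de_n G\leq 0$ at a boundary maximum. But $\de_n(|\gra u|^2)=2u_\ga u_{\ga n}+2u_n u_{nn}$; tangential differentiation of the boundary condition controls $u_{\ga n}$, and $u_n=\varphi$ is known, yet $u_{nn}$ appears with a coefficient $u_n$ that may vanish (e.g., $\hat\mu=\mu=0$) or have either sign, and $u_{nn}$ is \emph{not} determined by the boundary condition. No tangential differentiation, of any order, constrains $u_{nn}$. Your claim that the normal term ``reduces to an already controlled expression'' does not hold. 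The paper sidesteps this entirely by never attempting a gradient estimate in isolation: it uses $\Gamma\subset\Gamma_1^+$ to get $|\gra u|^2\leq C(\Delta u+1)$ and then estimates $\Delta u + |\gra u|^2 + n\mu u_n$ as a single quantity, for which the uncontrolled boundary term is $u_{nnn}$ (coming from $\de_n \Delta u$), and \emph{that} is bounded via the differentiated equation.

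Your Hessian step is closer in spirit to the paper's case~(a), Step~3, where $\eta(\hess u + du\otimes du + \mu u_n g)e^{ax_n}$ is maximized over $(x,\xi)$; there the paper again shows the boundary is excluded by $\de_n>0$, using both the $u_{nnn}$ estimate (for normal $\xi$) and formulas (\ref{e:nga})--(\ref{e:ngagb}) (for tangential $\xi$). However, the paper does this \emph{after} $\Delta u$ is already bounded (Step 2), so the interior maximum-principle computation has the required lower-order control. Your outline tries to run the directional argument with only a gradient bound, which is insufficient: the commutator and quadratic terms in the twice-differentiated equation need $\Delta u$ bounded, not just $|\gra u|$. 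Finally, your description of how (A) and $\Gamma_2^+\subset\Gamma$ are used (``absorb the bad contribution into $F^{nn}W_{nn}$'') is too vague: in the paper, (A) is applied in the sign-split $W_{nn}\geq 0$ versus $W_{nn}<0$ when bounding $F^{nn}(W_{nn,n}-2\mu W_{nn})$ after normal differentiation of the PDE, and $\Gamma_2^+\subset\Gamma$ gives $|u_{ij}|\leq C\Delta u$ which is used both there and to pass from a $\Delta u$ bound to a Hessian bound at the end.
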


 When the manifolds are locally conformally flat on the boundary, we will show
 in Section~\ref{s:backgd} that  $A_g$
 satisfies the conditions (T0)-(T2).
 Denote the Weyl tensor by $\mathcal{W}_{ijkl}$
 and the Cotten tensor by $\mathcal{C}_{ijk} = A_{ij,k} - A_{ik,j}.$
 Then we have  the following Corollary.
  \begin{corollary}\label{c:lcfbdy}
  Let $F$ satisfy (S0)-(S3) in a corresponding cone $\Gamma.$
 Suppose that $\Sigma_r$ is umbilic with principal curvatures $\mu$ and $n$ is the unit
 inner normal with respect to $g.$ Suppose $\mathcal{W}_{ijkl}= 0$
 and $\mathcal{C}_{ijk}= 0$ on $\Sigma_r.$
 Let $u \in C^4$ be a solution to the equation
 \begin {equation} \left\{  \begin{array}{ll}
 F(\hess u + du \otimes du - \frac{1}{2}|\gra u|^2 g + A_g
 )=  f(x) e^{-2u} & in \, \overline {B}^+_r  \\
 \frac{\de u}{\de n} + \mu =  \hat \mu\, e^{-u} & on \, \Sigma_r.
  \end{array}\right .
  \end {equation}
 Case(a). If $\hat \mu = 0,$ then
 $$\sup_{x \in \overline{B}^+_{\frac{r}{2}}} \,( |\gra u|^2 + |\hess u| ) \leq
   C,$$
  where $C$ depends on $ r, n, \epsilon, \mu, \inf_{\overline {B}^+_r} u,
  \|g\|_{C^4}, \| f\|_{C^2(\overline {B}^+_r)}$ and
  $\inf_{\overline {B}^+_r} f.$\\
 Case(b).  Suppose that $F$ satisfies
 the additional condition (A) and $\Gamma_2^+ \subset \Gamma.$
  If $\hat \mu$ is a positive constant, then
  $$\sup_{x \in \overline{B}^+_{\frac{r}{2}}} \,( |\gra u|^2 + |\hess u| ) \leq
   C, $$
  where $C$ depends on $ r, n, \epsilon, \rho, \mu, \hat \mu, \inf_{\overline {B}^+_r} u,
   \|g\|_{C^4}, \| f\|_{C^2(\overline {B}^+_r)}$ and $\inf_{\overline {B}^+_r} f.$
  \end{corollary}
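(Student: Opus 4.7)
The plan is to reduce the Corollary to a direct application of Theorem~\ref{t:bdy}. Take $S(x) = A_g$ and rewrite the right-hand side of the PDE as $\tilde{f}(x,z) := f(x)\,e^{-2z}$; then the problem in the Corollary is a special case of (\ref{e:symm}), so it suffices to check that the hypotheses of Theorem~\ref{t:bdy} are met in this setup.

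The main (geometric) task is to verify that $S = A_g$ satisfies (T0)--(T2) on $\Sigma_r$. Work in Fermi coordinates around $\Sigma_r$. Condition (T0) follows by contracting the Codazzi identity $R_{\alpha n\beta\gamma} = L_{\alpha\beta;\gamma} - L_{\alpha\gamma;\beta}$ with the umbilic relation $L_{\alpha\beta} = \mu g_{\alpha\beta}$ and then using $A_{\alpha n} = R_{\alpha n}/(n-2)$. For (T1), the vanishing of the Weyl tensor on $\Sigma_r$ gives the algebraic decomposition $R_{ijkl} = A_{ik}g_{jl} - A_{il}g_{jk} + A_{jl}g_{ik} - A_{jk}g_{il}$, from which $R_{\alpha n\beta n} = A_{\alpha\beta} + A_{nn}g_{\alpha\beta}$, so (T1) holds with equality. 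For (T2), the vanishing of the Cotton tensor gives $A_{\alpha\beta,n} = A_{\alpha n,\beta}$; expanding the right-hand side as a covariant derivative using the Fermi Christoffel symbols (where $\Gamma^n_{\alpha\beta}$ is proportional to $\mu g_{\alpha\beta}$ by umbilicity), then substituting (T0) and the formula for $R_{\alpha n\beta n}$ from (T1), produces the right-hand side of (T2) as an equality. These identities are the ones announced for Section~\ref{s:backgd}.

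The remaining step is to verify the assumptions of Theorem~\ref{t:bdy} on $\tilde{f}$. A direct calculation gives $|\gra_x \tilde{f}| \le (\|\gra_x f\|_\infty/\inf f)\,\tilde{f}$ and $|\tilde{f}_z| = 2\tilde{f}$, so the Lipschitz-type bounds hold with some $\Lambda = \Lambda(\|f\|_{C^1}, \inf_{\overline B_r^+} f)$. The quantity $c_{sup}(r)$ of Theorem~\ref{t:bdy} involves $\tilde{f}$ together with its first and second derivatives in $x$ and $z$; each such term is bounded by $\|f\|_{C^2}$ times a power of $e^{-2u}$, which in turn is controlled from above using $\inf_{\overline B_r^+} u$. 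Thus every constant admitted in Theorem~\ref{t:bdy} translates cleanly into the constants allowed by the Corollary, and applying Theorem~\ref{t:bdy}(a) when $\hat\mu = 0$ or (b) when $\hat\mu$ is a positive constant yields the conclusion.

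The principal obstacle is the verification of (T2). While (T0) and (T1) use only umbilicity plus the Weyl vanishing in a direct algebraic way, (T2) requires the Cotton vanishing together with a careful accounting of the Fermi Christoffel contributions so that the extrinsic curvature terms collect into exactly $\mu_{\tilde\alpha\tilde\beta} - \mu\,R_{\alpha n\beta n}$. Once that identity is secured, the remainder of the proof is bookkeeping against Theorem~\ref{t:bdy}.
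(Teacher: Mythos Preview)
Your proposal is correct and follows essentially the same approach as the paper: reduce to Theorem~\ref{t:bdy} with $S=A_g$ and $\tilde f(x,z)=f(x)e^{-2z}$, verify (T0)--(T2) via the Codazzi equation, the Weyl decomposition, and the Cotton identity (exactly the content of Lemma~\ref{l:bdy}), and then check that $\Lambda$ and $c_{\sup}$ depend only on $\|f\|_{C^2}$, $\inf f$, and $\inf u$. Your identification of (T2) as the one step requiring care is also accurate.
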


 The next estimates concern the $\gs_2$ equation.
 Let $A^t = A + \frac{1-t}{2} (tr A) g;$ see \cite{GV03}. Under the
 conformal change,  the tensor $\hat A^t$
 satisfies
 $$\hat A^t = \hess u + d u \otimes du - \frac{1}{2} |\gra u|^2 g
 + \frac{1-t}{2} (\Delta u - \frac{n-2}{2}|\gra u|^2)g + A^t.$$
 Consider the equation
  \begin {equation} \label{e:bdy1} \left\{  \begin{array}{ll}
  \gs_2^{\frac{1}{2}}(\hess u + du \otimes du - \frac{1}{2}|\gra u|^2 g
 + \frac{1-t}{2} (\Delta u - \frac{n-2}{2}|\gra u|^2)g + A^t+ S)= f(x, u) & in \, \overline {B}^+_r \\
 \frac{\de u}{\de n} + \mu_g = 0 & on \, \Sigma_r,
  \end{array}\right .
  \end {equation} where $S(x)$ is a $(0,2)$-tensor and $f(x, u)$ is positive.

 \begin{theorem} \label{t:bdy1} Let $n \geq 4.$ Suppose that $\Sigma_r$ is umbilic with principal
  curvatures $\mu.$   Let $u^t \in C^4$ be a solution to the equation
  (\ref{e:bdy1}).\par

   (a) When $t = 1,$ we have
   $$ \sup_{x\in \overline{B}_{\frac{r}{2}}^+}  |\gra u|^2
   \leq C_3 ,$$
   where $C_3 = C_3( n, r, \|g\|_{C^4}, \|S\|_{C^2 (\overline {B}^+_r)}, c_{\sup}(r))$
   but is independent of $c_{\inf}(r).$\par

   (b) Let $-\Theta \leq t \leq 1.$  Suppose in addition that $S$ satisfies $S_{\ga n}=0$ and
  $g^{\ga \gb} (S_{\ga \gb, n} - 2 \mu S_{\ga \gb}) \leq 0$ on
  $\Sigma_r.$ Then
   $$\sup_{x\in \overline{B}_{\frac{r}{2}}^+} (|\gra u|^2+ |\hess u|) \leq C_4,$$
    where $C_4 = C_4(n, r, \Theta, \|g\|_{C^4}, \|S\|_{C^2 (\overline {B}^+_r)},
    c_{\sup}(r), c_{\inf}(r)).$
  \end{theorem}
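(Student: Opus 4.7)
The plan is to run a localized Bernstein--type maximum principle on the half-ball $\overline{B}^+_r$ with a standard cut-off $\eta$ supported in $\overline{B}^+_r$ and equal to $1$ on $\overline{B}^+_{r/2}$, splitting according to whether the auxiliary function attains its maximum in the interior, on $\Sigma_r$, or on $\{\eta=0\}$. The novelty compared with Theorem~\ref{t:bdy} is that for $t<1$ the operator depends on $\Delta u$ through the term $\frac{1-t}{2}(\Delta u - \frac{n-2}{2}|\gra u|^2)g$, so one cannot directly appeal to the framework of (\ref{e:symm}). The structural facts we will repeatedly use are the concavity (S1) of $\gs_2^{1/2}$ on $\Gamma_2^+$, the a priori inclusion $\hat A^t \in \Gamma_2^+$ forced by the equation, the umbilic hypothesis $L_{\ga\gb}=\mu g_{\ga\gb}$, and the Neumann condition $u_n=-\mu$, which can be differentiated tangentially on the umbilic boundary to obtain $u_{\ga n}=-\mu_{\tilde \ga}+\mu u_\ga$ up to controllable terms.

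For part (a), the choice $t=1$ eliminates the $\Delta u$ term and the equation fits the framework of (\ref{e:symm}) with $S$ replaced by $A+S$. I would use $W=\eta^2 |\gra u|^2 e^{\phi(u)}$ with $\phi(u)=-Ku$ for large $K$: the Bernstein computation yields a good $F^{ij} u_{ki}u_{kj}$ term together with a positive $|\gra u|^4$ contribution coming from $\phi'$ paired with $F^{ij}u_i u_j$, while the only dependence on $f$ is through $|\gra_x f|/f \leq \Lambda$ and $|f_z|/f\leq \Lambda$, never through $c_{\inf}(r)$; this is precisely why (a) holds with no lower bound on $f$. At a boundary maximum on $\Sigma_r$, Hopf forces $\de_n W\geq 0$, and substituting the tangential derivative of $u_n = -\mu$ bounds the tangential gradient, while the normal component is bounded by the boundary condition itself.

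For part (b), the $\Delta u$ term couples gradient and Hessian, so we must bound them simultaneously by working with the tensor itself. The natural test quantity is $W=\eta^2 \lambda_{\max}(M^t)+\alpha\eta^2 |\gra u|^2$ where $M^t=\hess u+du\otimes du-\tfrac12 |\gra u|^2 g+\tfrac{1-t}{2}(\Delta u-\tfrac{n-2}{2}|\gra u|^2) g+A^t+S$ and $\alpha>0$ is small enough that the gradient piece controls cross terms, using the standard perturbation to make $\lambda_{\max}$ smooth near its maximum. Differentiating $F(M^t)=f$ twice, discarding the nonnegative concavity contribution via (S1), and absorbing cubic derivatives by Cauchy--Schwarz yields $F^{ij}W_{ij}\geq -C(1+W)$ at an interior maximum. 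The use of $|\gra u|^2\leq C_4$ and the range $-\Theta\leq t\leq 1$ appear in bounding the coefficient of the $\Delta u$ term.

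The main obstacle is the boundary analysis for the Hessian estimate. At a maximum of $W$ on $\Sigma_r$, one must show $\de_n W\leq 0$, which, after twice differentiating $u_n+\mu=0$ tangentially and using umbilicity, reduces to controlling the normal--normal component $u_{nn}$ at the boundary. This component is not directly fixed by the Neumann condition, and without extra structure one would be stuck. The hypotheses $S_{\ga n}=0$ and $g^{\ga\gb}(S_{\ga\gb,n}-2\mu S_{\ga\gb})\leq 0$ are precisely what is needed so that, after tracing the twice-differentiated boundary relation, the bad $\de_n u_{nn}$ term is dominated by quantities already controlled, playing the role of (T0)--(T2) in Theorem~\ref{t:bdy}. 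The technical core of the proof will be a case split according to whether the top eigendirection of $M^t$ at the boundary maximum is tangential, normal, or mixed, together with the tangential trace argument just sketched.
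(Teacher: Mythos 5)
Your Bernstein strategy is reasonable in broad outline, but the specific test functions you propose do not close the boundary step, and the actual mechanism the paper uses for the boundary is quite different.

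\textbf{Part (a).} You take $W=\eta^2|\gra u|^2 e^{-Ku}$ and argue that at a boundary maximum the boundary condition $u_n=-\mu$ and its tangential derivative control $W_n$. That does not work: at a point of $\Sigma_r$ one has $\eta_n=0$ (since $\eta$ depends on $|x|$ and $x_n=0$), and
\[
W_n=\eta^2 e^{-Ku}\Bigl(2u_\ga u_{\ga n}+2u_n u_{nn}+K\mu|\gra u|^2\Bigr)
   =\eta^2 e^{-Ku}\Bigl(2u_\ga(-\mu_\ga+\mu u_\ga)-2\mu u_{nn}+K\mu|\gra u|^2\Bigr),
\]
and the term $-2\mu u_{nn}$ carries an a priori uncontrolled second normal derivative; precisely because part (a) makes no use of $c_{\inf}$, you cannot appeal to the equation or to Theorem~\ref{t:bdy}-type Hessian bounds to absorb it. The paper's choice is
$G=\eta\bigl(u_{\ga\ga}+u_\ga u_\ga+(n-1)\mu u_n\bigr)e^{ax_n}$: the summand $(n-1)\mu u_n$ is inserted exactly so that its normal derivative $(n-1)\mu u_{nn}$ cancels the $-\mu u_{nn}g_{\ga\gb}$ term that formula (\ref{e:ngagb}) produces inside $u_{\ga\ga n}$, leaving $G_n$ with no $u_{nn}$ at all and hence $G_n>0$ for $a$ large. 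Moreover, passing to $u_{\ga\ga}$ rather than $|\gra u|^2$ is not optional: the paper first derives $|\gra u|^2<C(1+u_{\ga\ga})$ from $T_1(W)_{nn}>0$, and at the end needs both this and the bound on $G$ to extract the gradient bound. Your $e^{-Ku}$ weight produces nothing that substitutes for this cancellation.

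\textbf{Part (b).} You propose $\eta^2\lambda_{\max}(M^t)+\alpha\eta^2|\gra u|^2$ and a case split on the eigendirection at a boundary maximum. The paper works instead with the scalar $H=\eta(\Delta u+|\gra u|^2)e^{ax_n}$, which avoids the eigendirection problem entirely, and its boundary step is a genuine \emph{third-derivative} estimate: one differentiates the equation in the normal direction, uses $F^{\ga n}=0$, the umbilicity relations (\ref{e:nga})--(\ref{e:ngagb}), the algebraic identity $f^2=T_1(W)_{ij}W_{ij}$, and crucially the boundary Bianchi identity of Lemma~\ref{l:bdy1} (namely $g^{\ga\gb}(\hat A_{\ga\gb,n}-2\mu\hat A_{\ga\gb})=0$) together with the two hypotheses on $S$, to conclude $u_{nnn}\geq -L\Delta u-C$ on the boundary. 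That lower bound on $u_{nnn}$ is what makes $H_n>0$ and pushes the maximum inward. You correctly identify that the hypotheses on $S$ must be doing something like this, but the Bianchi-identity input is the load-bearing step and is absent from your sketch; without it, "the bad $\partial_n u_{nn}$ term is dominated by quantities already controlled" is an assertion, not an argument. You would also need to supply a replacement for the convenient identity $F^{ij}=\frac{1}{2F}T_1(W)_{ij}$ in the presence of the extra $\frac{1-t}{2}(\Delta u-\ldots)g$ term, which the paper handles by differentiating with respect to $W=\hat A^t+S$ rather than $\hat A$ alone; the $\lambda_{\max}$ route does not make this any easier, and introduces the additional difficulty that the maximizing direction need not be normal or tangential at the boundary.
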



 The main technique we use in proving Theorem~\ref{t:bdy} and \ref{t:bdy1}
 is to derive boundary $C^2$ estimates directly from boundary $C^0$ estimates.
 Such idea has appeared before in the work by Chen \cite{Chen05} for local $C^2$
estimates for a large class of equations. (See \cite{GW03a} for a related work.) The same idea has also
been applied to boundary estimates in \cite{Chen05a}.
To control
boundary behaviors, we do not construct a barrier function.
Instead, we estimate the third derivatives uniformly on the boundary.  Then 
the maximum of second derivatives must happen in the interior.

 Finally, we remark that  the conformal invariants condition in Theorem~\ref{t:main}, \ref{t:main'} and \ref{t:lcf-inv}
  is necessary. A counterexample can be constructed on a cylinder if  the condition does not hold.
 We  also remark that the Dirichlet problem for  the Schouten tensor equations  was studied by
 Guan \cite{Gb05}.  
 The Neumann problems and  non-Dirichlet problems are, on the other hand, not yet well studied.

 This paper is organized as follows.
  We start with some background in Section~\ref{s:backgd}.
  In Sections~\ref{s:4mfd}, we prove  Theorems~\ref{t:main},
  \ref{t:main'} and their application. We give proofs of
  Theorems~\ref{t:variation} and Corollary~\ref{c:variation}
  in Section~\ref{s:variation}. In Section~\ref{s:conf-inv}, we
  prove  Theorem~\ref{t:lcf-inv} and Proposition~\ref{p:gb}.
  At the end, we prove boundary
 estimates. The proofs of Theorem~\ref{t:bdy} and
 Corollary~\ref{c:lcfbdy}, and Theorem~\ref{t:bdy1} are in
 Sections~\ref{s:bdy} and \ref{s:bdy1}, respectively.

 \textbf{Acknowledgments:} Part of the work in this paper is
 in the author's thesis at Princeton University.
 The author is grateful to her advisor, Alice
 Chang, for her support, help and patience.

\section{Background} \label{s:backgd}

  We give some basic facts about homogeneous symmetric functions.
\begin{lemma}(see \cite{Chen05}).\label{l:sym}
 Let $\Gamma$ be an open convex cone with vertex at the origin
 satisfying $\Gamma^+_n \subset \Gamma$ ,and let
 $e = (1, \cdots, 1)$ be the identity.
 Suppose that $F$ is a homogeneous symmetric function of degree one
 normalized with $F(e)= 1,$ and that $F$ is concave in $\Gamma.$
 Then\par

 (a) $\sum_i \gl_i \frac{\de F(\gl)}{\de \gl_i} = F(\gl), \quad$ for $\gl \in
 \Gamma;$\par

 (b) $\sum_i \frac{\de F(\gl)}{\de \gl_i} \geq F(e) = 1, \quad$
 for $\gl \in \Gamma.$
\end{lemma}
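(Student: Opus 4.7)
The plan is to derive (a) directly from the Euler identity for homogeneous functions and then obtain (b) by plugging the vertex $e$ into the tangent-hyperplane inequality coming from concavity.

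For part (a), since $F$ is homogeneous of degree one on $\Gamma$, the identity $F(t\gl)= t F(\gl)$ holds for all $t>0$ and $\gl\in\Gamma$. Differentiating in $t$ at $t=1$ yields $\sum_i \gl_i \frac{\de F}{\de \gl_i}(\gl) = F(\gl)$. This is Euler's relation and requires only differentiability, no concavity.

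For part (b), the hypothesis $\Gamma_n^+\subset \Gamma$ guarantees $e\in\Gamma$, and convexity of $\Gamma$ means the segment from $\gl$ to $e$ stays inside $\Gamma$. Concavity of $F$ on $\Gamma$ then gives the tangent-plane bound
\[
 F(e) \leq F(\gl) + \sum_i (1-\gl_i)\,\frac{\de F}{\de \gl_i}(\gl).
\]
Expanding the right-hand side and applying part (a) to cancel $F(\gl)$ with $\sum_i \gl_i \frac{\de F}{\de \gl_i}(\gl)$ leaves $\sum_i \frac{\de F}{\de \gl_i}(\gl)$, and since $F(e)=1$ by the normalization, (b) follows.

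Neither step is genuinely hard; the only point worth being careful about is verifying that all quantities are well-defined, which reduces to checking $e\in \Gamma$ (ensured by $\Gamma_n^+\subset\Gamma$) and that the one-sided tangent inequality from concavity is valid on a convex open set. Since no additional structure conditions beyond homogeneity, degree one, and concavity are invoked, the argument will be very short.
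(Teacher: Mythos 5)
Your proof is correct, and since the paper simply cites \cite{Chen05} for this lemma without reproducing an argument, there is no in-text proof to compare against; the Euler identity for (a) and the concavity tangent-plane inequality evaluated at $e$ for (b) is the standard argument and is exactly what one would expect the reference to contain.
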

 Now we list further properties of elementary symmetric functions.
 \begin{lemma} \label{l:sigma}(see \cite{Chen05}).
  Let $G =\gs_k ^{\frac{1}{k}}, k \leq n.$ Then\par

  (a) $G$ is positive and concave in $\Gamma^+_k.$\par
  (b) $G$ is monotone in $\Gamma^+_k,$ i.e., the matrix
  $G^{ij} = \frac {\de G}{\de W_{ij}}$ is positive definite.\par
  (c) Suppose $\gl \in \Gamma^+_k.$ For $0 \leq l < k \leq n,$
    the following is the Newton-MacLaurin inequality
    $$ k(n-l+1) \gs_{l-1} \gs_k \leq l(n-k+1) \gs_l \gs_{k-1}.$$
\end{lemma}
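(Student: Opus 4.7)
The plan is to organize the three parts so that (c) serves as the workhorse: prove the Newton--MacLaurin inequality first, then extract monotonicity (b) by reducing to the diagonal case, and finally conclude with concavity (a) via Gårding's theory of hyperbolic polynomials.

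For part (c), I would rescale by setting $P_k := \gs_k/\binom{n}{k}$, so that the claimed inequality is equivalent to the monotonicity $P_k/P_{k-1} \leq P_l/P_{l-1}$ for $l\leq k$. It suffices to prove the single-step Newton inequality $P_{k-1}P_{k+1} \leq P_k^2$ and then chain the resulting ratios. Since the eigenvalues $\gl_i$ are real, Rolle's theorem applied to $p(t) = \prod_i (t - \gl_i)$ shows that every derivative $p^{(j)}$ also has only real roots. The coefficients of $p^{(j)}$ are proportional to the $\gs_m$'s, and differentiating down to a quadratic and examining its nonnegative discriminant yields precisely the two-term Newton inequality. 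Iterating produces the full statement of (c).

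For part (b), the tensor $G^{ij}$ transforms equivariantly under the orthogonal group acting on $W$, so it suffices to check positivity at a diagonal $W = \mathrm{diag}(\gl_1,\ldots,\gl_n)$ with $\gl \in \Gamma_k^+$. There $G^{ij}$ is diagonal with entries $\tfrac{1}{k}\,\gs_k^{\frac{1}{k}-1}\,\gs_{k-1}(\gl \mid i)$, where $\gs_{k-1}(\gl \mid i)$ denotes the $(k-1)$st elementary symmetric function of the $n-1$ eigenvalues $\gl_j$ with $j \neq i$. Positivity of $\gs_{k-1}(\gl \mid i)$ is the classical statement that the slice $\gl \mid i$ lies in the corresponding $\Gamma_{k-1}^+$ in $\mathbb{R}^{n-1}$ whenever $\gl \in \Gamma_k^+$, which follows either from (c) or directly from Gårding's cone theory.

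For part (a), positivity of $G$ on $\Gamma_k^+$ is immediate from the defining condition $\gs_k > 0$. Concavity is the main analytic content; the cleanest route is Gårding's theorem \cite{Gar59}, which asserts that for a hyperbolic polynomial $p$ of degree $k$ with respect to a direction $e$, the function $p^{1/k}$ is concave on the associated open convex Gårding cone. Since $\gs_k$ is hyperbolic with respect to $e = (1,\ldots,1)$ and $\Gamma_k^+$ is precisely the connected component of $\{\gs_k > 0\}$ containing $e$, concavity on $\Gamma_k^+$ follows. The principal obstacle, if one wants a fully self-contained derivation avoiding Gårding as a black box, is computing $\hess G$ directly at a diagonal matrix and verifying negative semi-definiteness; that computation reduces to Newton--MacLaurin-type inequalities, which is exactly why I would establish (c) first.
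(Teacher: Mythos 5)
The paper itself gives no proof of this lemma; it is stated with a bare citation to \cite{Chen05}, so there is no internal argument to compare against. Your proposal is a correct derivation by standard means, and the organization is sensible: Rolle's theorem gives the two-term Newton inequality $P_{j-1}P_{j+1}\le P_j^2$, the normalization $P_j=\gs_j/\binom{n}{j}$ makes the asymmetric coefficients in (c) collapse so that the claim is exactly $P_{l-1}P_k\le P_l P_{k-1}$, and chaining the ratios $P_j/P_{j-1}$ (legitimate since $P_0,\dots,P_k>0$ on $\Gamma_k^+$) closes (c); diagonalization via $O(n)$-equivariance reduces (b) to positivity of $\gs_{k-1}(\gl\,|\,i)$; and (a) is positivity plus G{\aa}rding's concavity theorem for hyperbolic polynomials.

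One imprecision worth flagging: in (b) you say the positivity of $\gs_{k-1}(\gl\,|\,i)$ for $\gl\in\Gamma_k^+$ ``follows either from (c) or directly from G{\aa}rding's cone theory.'' Part (c) alone does not deliver this: the Newton--MacLaurin inequalities constrain ratios of the full symmetric functions $\gs_j(\gl)$, but say nothing directly about the restricted tuple $(\gl\,|\,i)$, and the identity $\gs_k(\gl)=\gl_i\gs_{k-1}(\gl\,|\,i)+\gs_k(\gl\,|\,i)$ cannot be unwound without further input because $\gl_i$ may be negative. The G{\aa}rding route you offer as an alternative is the one that actually works: $\gs_{k-1}(\cdot\,|\,i)=\de_{\gl_i}\gs_k$ is the derivative of a hyperbolic polynomial in the direction $e_i\in\overline{\Gamma_k^+}$, hence is itself hyperbolic with G{\aa}rding cone containing $\Gamma_k^+$, and is therefore positive there (equivalently, $\gl\in\Gamma_k^+\Rightarrow(\gl\,|\,i)\in\Gamma_{k-1}^+$ in $\mathbb{R}^{n-1}$). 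With that route understood as the operative one, the proof is complete.
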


 Let $W$ be an $m \times m$
 matrix.  $T_k(W)=\gs_k\,I - \gs_{k-1} W + \cdots + (-1)^k W^k$ is called
the $k$th Newton tensor of $W$; \cite{Reilly}. We have the recursive formula
$T_k(W) = \gs_k(W) \, I - T_{k-1}(W) W.$ Furthermore, $\frac{\de
\gs_k (W)}{\de W_{ij}} = T_{k-1}^{ij}(W)$ and $tr \; T_k(W) =
(m-k) \gs_k(W).$ 

  We introduce some more notations. Given an $n\times n$ matrix $A,$ denote the upper left $(n-1) \times (n-1)$
  sub-matrix by $A^T= [A_{\ga \gb}].$ The Greek letters
  $1\leq \ga,\gb,\gc \leq n-1$
 stand for the tangential indices and the
 letters $1 \leq i,j,k \leq n$ stand for the full indices
 unless otherwise noted.  The Kronecker symbol $\left(  \begin{array}{l}
    i_1 \cdots i_q\\
    j_1 \cdots j_q
    \end{array} \right) $ is defined as in   \cite{Reilly}.
\begin{lemma} \label{l:Tij} Let $A$ be an $n \times n$ matrix.\par
 (a) $\gs_q (A) = \frac{1}{q!} \sum
   \left(  \begin{array}{l}
    i_1 \cdots i_q\\
    j_1 \cdots j_q
    \end{array} \right) A^{j_1}_{i_1}\cdots A^{j_q}_{i_q};$\par
 (b) $ T_q (A)^i_j = \frac{1}{q!} \sum
   \left(  \begin{array}{l}
    i_1 \cdots i_q \,i\\
    j_1 \cdots j_q \,j
    \end{array} \right) A^{j_1}_{i_1}\cdots A^{j_q}_{i_q};$\par
 (c) $T_q (A)^n_n = \gs_q (A^T);$\par
 (d) $T_q (A)^{\ga}_n= - T_{q-1} (A^T)^{\ga}_{\gb} A^{\gb}_n.$
\end{lemma}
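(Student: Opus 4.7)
The plan is to establish (a) and (b) by direct expansion of the generalized Kronecker symbol, and then obtain (c) and (d) by specializing its two outer indices in (b). For (a), I use the description of $\left(\begin{array}{l} i_1 \cdots i_q\\ j_1 \cdots j_q \end{array}\right)$ as $\mathrm{sgn}(\sigma)$ when $(j_1, \ldots, j_q)$ is the image of the distinct tuple $(i_1, \ldots, i_q)$ under a permutation $\sigma$, and zero otherwise. For each unordered $q$-subset $I \subset \{1, \ldots, n\}$ the $q!$ orderings of $I$ in the outer sum absorb the $1/q!$ prefactor, and the inner sum over $j$'s is the principal minor $\det A[I \mid I]$ by Leibniz, so summing over $I$ yields $\sigma_q(A)$. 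For (b), I differentiate (a) with $q$ replaced by $q+1$ and invoke $T_q(A)^i_j = \de \sigma_{q+1}/\de A^j_i$; Leibniz produces $q+1$ terms that are all identical after relabeling dummy indices and using antisymmetry of the Kronecker in each row, and the resulting factor of $q+1$ collapses with $1/(q+1)!$.

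Part (c) is an immediate specialization of (b). Setting $i = j = n$, the $(q+1) \times (q+1)$ Kronecker $\left(\begin{array}{l} i_1 \cdots i_q\, n\\ j_1 \cdots j_q\, n \end{array}\right)$ vanishes unless $n$ does not reappear among the summation indices, forcing all of them to be Greek. The Kronecker then reduces to the $q \times q$ version on $\{1, \ldots, n-1\}$ and the $A$-entries coincide with those of $A^T$, so (a) applied to $A^T$ gives $\sigma_q(A^T)$.

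Part (d) is the heart of the matter. Setting $i = \alpha$ and $j = n$ in (b), the condition $\alpha \neq n$ forces exactly one $i_a$ to equal $n$ and exactly one $j_s$ to equal $\alpha$. Antisymmetry of the Kronecker in each row separately lets me arrange $s = a$ and write $\beta := j_a$, which is necessarily Greek. Summation over the $q$ choices of $a$ gives $q$ identical contributions, compressing $1/q!$ to $1/(q-1)!$; moving $n$ past $\alpha$ in the upper row produces a sign $(-1)^{q-a+1}$, and moving $\beta$ to the matching slot in the lower row produces $(-1)^{q-a}$, yielding a per-term sign of $-1$. The reduced $q \times q$ Kronecker on Greek indices combined with the remaining $(q-1)!$ is precisely $T_{q-1}(A^T)^\alpha_\beta$ by (b) applied to $A^T$, which delivers $-T_{q-1}(A^T)^\alpha_\beta A^\beta_n$.

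The main obstacle is the sign and combinatorial bookkeeping in (d); a cleaner way to organize it is to expand the $(q+1) \times (q+1)$ Kronecker along its last column. The diagonal cofactor vanishes because $\delta^\alpha_n = 0$, and the remaining $q$ cofactors reassemble, via the summation symmetry over the position of $n$ in the upper row, into $T_{q-1}(A^T)^\alpha_\beta A^\beta_n$ with a single overall factor of $-1$.
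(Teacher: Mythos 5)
Your overall strategy matches the paper's: establish (a) and (b) by direct manipulation of the generalized Kronecker symbol (the paper simply cites Reilly for these), deduce (c) at once, and obtain (d) by isolating the forced index $n$ among the top-row summation indices. Parts (a), (b), (c) are correct, and the Laplace-expansion variant you give at the end of (d) is sound; it is essentially the paper's one-line observation, which uses the invariance of the sum under \emph{simultaneous} permutations of the pairs $(i_b,j_b)$ to place the unique $i_a=n$ in the last top slot, then swaps it past $\ga$ and contracts the repeated $n$ to produce the sign $-1$ and the residual $q\times q$ Greek Kronecker that assembles into $T_{q-1}(A^T)^\ga_\gb$.

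However, the first bookkeeping you offer for (d) contains an invalid step: ``antisymmetry of the Kronecker in each row separately lets me arrange $s=a$.'' Inside the sum defining $T_q(A)^\ga_n$ you cannot reindex the bottom row independently of the top, because the product $A^{j_1}_{i_1}\cdots A^{j_q}_{i_q}$ is invariant only under simultaneous permutations of the index pairs $(i_b,j_b)$, not under separate row permutations; antisymmetry of the Kronecker symbol in each row is a property of the symbol at fixed indices, not a reindexing freedom of the summand. Worse, if one literally imposes $s=a$ then $\gb := j_a$ would equal $\ga$, which is not what you want. You also do not need $s=a$: once $i_a=n$ is placed in a fixed slot by the legitimate pair symmetry (this is where the factor $q$ compresses $1/q!$ to $1/(q-1)!$), the index $\ga$ may sit anywhere among the remaining $j$'s, and after swapping $n$ and $\ga$ in the top row and contracting the repeated $n$ the reduction closes exactly as in your cleaner version. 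The conclusion and the Laplace argument are correct; only that one sentence needs to be replaced by the pair-permutation reasoning.
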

 \begin{proof}
 For (a) and (b), see \cite{Reilly}.
  (c) is directly from (a) and (b).
   (d) follows by an observation that
 $  T_q (A)_n^{\ga} 
    = \frac{1}{(q-1)!} \sum
   \left(  \begin{array}{l}
    i_1 \cdots i_{q-1} \, n\; \ga\\
    j_1 \cdots j_{q-1}\, j_q \,n
    \end{array} \right) A^{j_1}_{i_1}\cdots A^{j_{q-1}}_{i_{q-1}}
    A^{j_q}_n.\\
     $
 \end{proof}
 We define the mixed symmetric functions and Newton tensors:
 \begin{definition} \label{d:mix-sym}Let $A$ and $B$ be $m \times m$ matrices. Then\par
 $\gs_{q,r} (A, B) = \frac{1}{q!} \sum
   \left(  \begin{array}{l}
    i_1 \cdots i_q\\
    j_1 \cdots j_q
    \end{array} \right) A^{j_1}_{i_1}\cdots A^{j_r}_{i_r}
    B^{j_{r+1}}_{i_{r+1}}\cdots B^{j_q}_{i_q};$\par
 $ T_{q,r} (A, B)^i_j = \frac{1}{q!} \sum
   \left(  \begin{array}{l}
    i_1 \cdots i_q \,i\\
    j_1 \cdots j_q \,j
    \end{array} \right) A^{j_1}_{i_1}\cdots A^{j_r}_{i_r}
     B^{j_{r+1}}_{i_{r+1}}\cdots B^{j_q}_{i_q}.$
 \end{definition}
 Denote a variation of a tensor $A$ by $A'.$ The next lemma
  is used in proving Theorem~\ref{t:variation}.
\begin{lemma} \label{l:variation}
 Let $A$ and $B$ be $m \times m$ matrices. Suppose that ${A'}^j_i= k A^j_i \phi +
 M^j_i$ and ${B'}^j_i= l B^j_i \phi + N^j_i.$ Then\par
 (a) $T_{q,r} (A, B)^i_j A^j_i= (q+1) \gs_{q+1, r+1} (A, B)$\par
 $\begin{array} {ll}
  \text{(b)}\, \gs'_{q+1, r+1} (A, B) = &(k(r+1)+ l(q-r)) \gs_{q+1, r+1} (A, B)
 \phi\\
   & + \frac{r+1}{q+1} T_{q, r} (A, B)^i_j M^j_i+ \frac{q-r}{q+1} T_{q, r+1} (A, B)^i_j N^j_i
   \end{array}$\par
 (c) $\gs'_{q+1}(A)= k(q+1) \gs_{q+1}(A)\phi + T_q(A)^i_j M^j_i.$
\end{lemma}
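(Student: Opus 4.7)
The plan is to derive all three identities directly from Definition~\ref{d:mix-sym}, using only the total antisymmetry of the generalized Kronecker symbol in its upper and lower indices together with careful factorial bookkeeping; there is no analytic content here beyond the product rule.

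For part (a), I would start from the defining formula for $T_{q,r}(A,B)^i_j$, contract with $A^j_i$, and compare with the defining formula for $\gs_{q+1,r+1}(A,B)$. The contracted expression is exactly the same sum of products over $q+1$ index pairs, with $r+1$ copies of $A$ and $q-r$ copies of $B$, except that the normalization is $1/q!$ instead of $1/(q+1)!$; the ratio produces the claimed factor $q+1$.

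For part (b), I would differentiate $\gs_{q+1,r+1}(A,B)$ via the Leibniz rule. This produces $r+1$ terms in which a single $A$-factor is replaced by $A'$, and $q-r$ terms in which a single $B$-factor is replaced by $B'$. By the antisymmetry of the Kronecker symbol, after relabeling dummy indices all $r+1$ of the $A$-derivative terms are equal, and likewise all $q-r$ of the $B$-derivative terms are equal. Substituting ${A'}^j_i = kA^j_i\phi + M^j_i$ and ${B'}^j_i = lB^j_i\phi + N^j_i$, the pieces proportional to $\phi$ collapse through part (a) into $(k(r+1)+l(q-r))\gs_{q+1,r+1}(A,B)\,\phi$, while the pieces containing $M$ or $N$ become, after moving the distinguished index to the last slot,
\[
\frac{r+1}{q+1}\,T_{q,r}(A,B)^i_j M^j_i \quad\text{and}\quad \frac{q-r}{q+1}\,T_{q,r+1}(A,B)^i_j N^j_i,
\]
respectively. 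The coefficients $\frac{r+1}{q+1}$ and $\frac{q-r}{q+1}$ come precisely from the ratio between the $1/(q+1)!$ appearing in $\gs_{q+1,\cdot}$ and the $1/q!$ appearing in $T_{q,\cdot}$.

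For part (c), I would apply the same procedure directly to $\gs_{q+1}(A)$: there are $q+1$ equal terms after differentiating, and substituting ${A'}^j_i = kA^j_i\phi + M^j_i$ yields $k(q+1)\gs_{q+1}(A)\phi + T_q(A)^i_j M^j_i$ by Lemma~\ref{l:Tij}(b); equivalently this is the specialization of (b) in which no $B$-factor appears. The only real obstacle throughout is index bookkeeping — tracking which dummy indices are relabeled when invoking the Kronecker symmetry, and making sure the factorials $(q+1)!/q!$ line up. Once arranged carefully, each part follows in a few lines.
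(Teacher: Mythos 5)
Your proposal is correct and uses essentially the same machinery as the paper: the Leibniz rule applied to the Kronecker-symbol expansion, the exchange symmetry in paired upper/lower indices, and the $(q+1)!/q!$ normalization count. The one organizational difference is in (b): you differentiate $\gs_{q+1,r+1}(A,B)$ directly and collapse the $r+1$ identical $A'$-terms and $q-r$ identical $B'$-terms, whereas the paper first computes $T'_{q,r}(A,B)$, then differentiates the identity $(q+1)\gs_{q+1,r+1}(A,B)=T_{q,r}(A,B)^i_j A^j_i$ from part (a) by the product rule and invokes (a) once more to finish. Your route sidesteps the detour through $T'_{q,r}$ and is a touch more direct, but the underlying idea is the same.
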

 \begin{proof}
  (a) follows by definitions; see \cite{Reilly}, and (c) follows by
  (b) by letting $r=q$. 
  For (b), 
  $$\begin{array} {l}
  T'_{q,r}(A, B)^j_i = (kr+l(q-r)) T_{q,r}(A,B)^i_j \phi
      + \frac{1}{q!} \sum \left(  \begin{array}{l}
    i_1 \cdots i_q \,i\\
    j_1 \cdots j_q \,j
    \end{array} \right)\times \\
    \left[ \sum_{k=1}^r A^{j_1}_{i_1}\cdots M^{j_k}_{i_k}\cdots A^{j_r}_{i_r}
     B^{j_{r+1}}_{i_{r+1}}\cdots B^{j_q}_{i_q} +
     \sum_{k= r+1}^q A^{j_1}_{i_1}\cdots A^{j_r}_{i_r}
     B^{j_{r+1}}_{i_{r+1}}\cdots N^{j_k}_{i_k}\cdots B^{j_q}_{i_q}
     \right].
  \end{array}$$
  Using (a) and the formula above, we then have
 \begin{eqnarray*}
 (q+1) \gs'_{q+1, r+1} (A, B)   &=& (kr + l(q-r)) T_{q,r}(A, B)^i_j A^j_i \phi + rT_{q, r} (A,
  B)^i_j M^j_i\\
  &+& (q-r) T_{q, r+1} (A, B)^i_j N^j_i + T_{q,r}(A, B)^i_j (k A^j_i\phi +
  M^j_i).
 \end{eqnarray*}
 Using (a) again gives the result.
 \end{proof}

Now we check that two definitions of $\mathcal{B}^k$'s,
(\ref{e:bk}) and (\ref{e:bk-u}),
  coincide when the boundary is umbilic.
 By definition,
 $$
  \gs_{q, r} (A^T, \mu g) 
    = \frac{(n-1-r)!}{q!(n-1-q)!} \sum_{i_1, \cdots, j_1 \cdots < n}
   \left(  \begin{array}{l}
    i_1 \cdots i_r\\
    j_1 \cdots j_r
    \end{array} \right) A^{j_1}_{i_1}\cdots A^{j_r}_{i_r}
    \mu^{q-r}.
 $$
   Therefore, $\gs_{q, r} (A^T, \mu g) = \frac{r! (n-1-r)!}{q!(n-1-q)!} \gs_r(A^T) \mu^{q-r}$
   and  $\sum_{i=0}^{k-1} C_1(n,k, i) \gs_{2k-i-1, i} (A^T, \mu g)=
   \sum_{i=0}^{k-1} \frac{(n-1-i)!}{(n-k)!(2k-2i-1)!!} \gs_i(A^T) \mu^{q-r}$ $=
   \sum_{i=0}^{k-1} C_2(n,k,i) \gs_i(A^T) \mu^{q-r}.$


Next, we show some properties of curvatures on the boundary.
 We review two of the fundamental equations:
$R_{ijkl,m} + R_{ijmk,l} + R_{ijlm,k} = 0$  (Bianchi  identity) and
 $R_{\ga \gb \gc n} =   L_{\ga \gc, \gb} -  L_{\gb \gc,
  \ga}$ (Codazzi  equation),
 where $n$ is the unit inner normal with respect to $g.$
 In Fermi (geodesic) coordinates, the metric is expressed as $g =
dx^n dx^n + g_{\ga \gb} dx^{\ga} dx^{\gb}.$  The Christoffel
symbols satisfy
 \beq \label{e:christ}
 \Gamma_{\ga \gb}^n = L_{\ga \gb}, \quad
 \Gamma_{\ga n}^{\gb} = -L_{\ga \gc} g^{\gc \gb}, \quad
   \Gamma_{\ga n}^n = 0
  \eeq on the boundary.
 When the boundary is umbilic, they become
 \beq \label{e:christ-u}
 \Gamma_{\ga \gb}^n = \mu g_{\ga \gb}, \quad
 \Gamma_{\ga n}^{\gb} = -\mu \gd_{\ga \gb}, \quad
   \Gamma_{\ga n}^n = 0.
  \eeq
 We denote the tensors and covariant differentiations
 with respect to the induced metric $g_{\ga \gb}$ on the boundary by
 a \emph{tilde} (e.g. $\tilde{R_{\ga \gb}},$ $\mu_{\tilde{\ga} \tilde{\gb}}$).
 Then the  Christoffel symbols  satisfy
 \beq \label{e:christ-i}
 \widetilde{\Gamma}_{\ga \gb}^{\gc} = \frac{1}{2} g^{\gc \gd} (\frac{\de g_{\ga \gd}}{\de x_{\gb}}
  + \frac{\de g_{\gb \gd}}{\de x_{\ga}} - \frac{\de g_{\ga \gb}}{\de
  x_{\gd}})=
  \Gamma_{\ga \gb}^{\gc}.
 \eeq  We also denote the Laplacian in the
 induced metric by $\widetilde{\Delta}.$

 The next lemma gives us the relation between $\mathcal{B}^k_g$ and $h_g.$
\begin{lemma} \label{l:b=0}
 Let $(M, g)$ be a  compact manifold  with umbilic
 boundary. If $h_g = 0$ on the boundary, then we have $\mathcal{B}^k_g=
 0$. Conversely, if $\mathcal{B}^k_g= 0$ on the boundary and if in addition
 $A_g \in \Gamma_k^+$, then $h_g= 0.$
 \end{lemma}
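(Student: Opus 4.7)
The plan is to regard $\mathcal{B}^k_g$ as a polynomial in the single scalar $\mu$ and read off both directions from the shape of that polynomial. Since the boundary is umbilic, $L_{\alpha\beta} = \mu\, g_{\alpha\beta}$, so $h = g^{\alpha\beta}L_{\alpha\beta} = (n-1)\mu$; in particular $h_g = 0$ iff $\mu = 0$. Using the umbilic formula (\ref{e:bk-u}),
$$\mathcal{B}^k_g = \sum_{i=0}^{k-1} C_2(n,k,i)\,\sigma_i(A^T)\,\mu^{2k-2i-1},$$
and since every exponent $2k-2i-1 \ge 1$, one $\mu$ factors out:
$$\mathcal{B}^k_g = \mu \cdot Q(\mu^2), \qquad Q(t) := \sum_{i=0}^{k-1} C_2(n,k,i)\,\sigma_i(A^T)\,t^{k-1-i}.$$

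The forward direction is then immediate: $h_g = 0$ gives $\mu = 0$, and every term vanishes. For the converse, with $A_g \in \Gamma_k^+$ assumed, it suffices to show $Q(t) > 0$ for every $t \ge 0$, since then $\mu\, Q(\mu^2) = 0$ forces $\mu = 0$. The coefficients $C_2(n,k,i) = (n-i-1)!/[(n-k)!\,(2k-2i-1)!!]$ are manifestly positive for $0 \le i \le k-1$, so I only need to prove
$$\sigma_i(A^T) > 0 \qquad \text{for } 0 \le i \le k-1,$$
whenever $A_g \in \Gamma_k^+$.

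This sub-cone positivity is the only nontrivial step, and I would argue it as follows. By Lemma~\ref{l:Tij}(c), $\sigma_i(A^T) = T_i(A)_n^n$, so it suffices to show each Newton tensor $T_i(A)$ is positive definite on $\Gamma_k^+$ for $0 \le i \le k-1$. Diagonalizing $A$ with eigenvalues $\lambda = (\lambda_1,\ldots,\lambda_n)$, one has $T_i(A)_j^j = \sigma_i(\lambda|j)$, where $\lambda|j$ is the vector with $\lambda_j$ deleted. The classical G\aa rding fact is that $\lambda \in \Gamma_{i+1}^+$ implies $\lambda|j \in \Gamma_i^+$ in $n-1$ variables for every $j$, whence $\sigma_i(\lambda|j) > 0$ and $T_i(A)$ is positive definite. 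Since $A_g \in \Gamma_k^+ \subset \Gamma_{i+1}^+$ for every $i \le k-1$, this applies.

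With these pieces in place, the converse closes: $Q(t) > 0$ for all $t \ge 0$ (in particular $Q(0) = C_2(n,k,k-1)\,\sigma_{k-1}(A^T) > 0$), so $\mathcal{B}^k_g = \mu\, Q(\mu^2) = 0$ forces $\mu = 0$, i.e.\ $h_g = 0$. The only real obstacle is the sub-cone positivity $\sigma_i(A^T) > 0$ for $i \le k-1$; everything else is bookkeeping from the explicit formula (\ref{e:bk-u}) and the umbilic identity $L = \mu g$, which is precisely what makes $\mathcal{B}^k$ collapse to a one-variable polynomial whose sign can be read off directly.
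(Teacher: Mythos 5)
Your proof is correct and follows essentially the same route as the paper: factor out one power of $\mu$ from the umbilic formula (\ref{e:bk-u}), observe that each coefficient $C_2(n,k,i)\,\sigma_i(A^T)$ is nonnegative (positive when $A_g\in\Gamma^+_k$) via the identity $\sigma_i(A^T)=T_i(A)^n_n$ from Lemma~\ref{l:Tij}(c), and conclude. The paper's proof invokes Lemmas~\ref{l:sigma} and~\ref{l:Tij} to get positivity of $T_i(A)^n_n$ for $i<k$; you unpack the same positivity explicitly via the G{\aa}rding-type fact that $\lambda\in\Gamma^+_{i+1}$ implies $\sigma_i(\lambda|j)>0$, which is the mechanism underlying the paper's cited monotonicity of $\sigma_{i+1}^{1/(i+1)}$ on $\Gamma^+_{i+1}$ -- a harmless and slightly more self-contained rendering of the same step. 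One small shortcut you take relative to the paper: the paper works out $\mathcal{B}^2$ separately from the definition (\ref{e:b2}) for $n\geq 4$ and $n=3$ before switching to (\ref{e:bk-u}) for $k\geq 3$, whereas you apply (\ref{e:bk-u}) uniformly for all $k\geq 2$; this is justified since the paper has already verified in Section~\ref{s:backgd} that the two definitions agree on umbilic boundaries, and the formula $C_2(n,2,0)=\tfrac{n-1}{3}$, $C_2(n,2,1)=1$ indeed reproduces both of the paper's hand computations.
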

\bpf
  Let $L_{\ga \gb}= \mu g_{\ga
  \gb}.$ By Definition~\ref{d:mix-sym},
  $
  \gs_{2,1} (A^T, L) = \gs_{2,1} (A^T, \mu g) = \frac{1}{2} tr
  T_1 (A^T) \mu= \frac{n-2}{2} \gs_1 (A^T) \mu.
  $ Therefore, when $n \geq 4,$ we obtain
  $\mathcal{B}^2 = \gs_1 (A^T) \mu + \frac{2}{(n-2)(n-3)} \gs_3 (\mu g)
  = (\gs_1 (A^T) + \frac{n-1}{3} \mu^2)\mu.$ When $n=3$,
  $\mathcal{B}^2 = \gs_1 (A^T) \mu + \frac{1}{3}(2 \mu)^3 -\frac{1}{2} (2 \mu) (2\mu^2)
  =  (\gs_1 (A^T) + \frac{2}{3}\mu^2)\mu.$
 For $k \geq 3,$ we have $\mathcal{B}^k = (\sum_{i=0}^{k-1} C_2(n, k, i) \gs_i (A^{T})
 \mu^{2k-2i-2})\mu,$ where $C_2(n,k, i)$ is positive.

 Since $(n-1)\mu = h,$ if $h=0,$ then clearly $\mathcal{B}^k_g = 0.$ When
  $A_g \in \Gamma_k^+,$ by Lemmas~\ref{l:sigma} and \ref{l:Tij}
  we have $T_i(A)^n_n= \gs_i (A^T)$ is positive for $i < k$.
  As a result, $\mathcal{B}^k_g=0$ implies $h= 0.$
\epf

 We verify that the Schouten tensor satisfies conditions (T0)-(T2)
 when $\mathcal{W}=0$ and $\mathcal{C}=0$ on the boundary.

 \begin{lemma} \label{l:bdy} Suppose that the boundary is umbilic. Let
 $n$ be the unit inner normal with respect to $g.$
 Then\par
 (a) $A_{\ga n} = \mu_{\ga}$ on $\de M;$\par
 (b) $\mu_{\tilde{\ga} \tilde{\gb}} = A_{\ga n, \gb}+  A_{nn} \mu g_{\ga \gb} -  A_{\ga \gb}\mu$
    on $\de M;$ \par
 (c) If $\mathcal{W}=0$  on $\de M,$ then
    we have $R_{n \ga n \gb}= A_{\ga \gb}+ A_{nn} g_{\ga \gb}$ on
    the boundary. If in addition $\mathcal{C}= 0$ on $\de M,$ then
    $A_{\ga \gb, n}- 2\mu A_{\ga \gb}= \mu_{\tilde{\ga} \tilde{\gb}}
    -R_{\ga n \gb n}\mu.$
\end{lemma}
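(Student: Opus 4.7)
The plan is to unwind each of (a), (b), (c) by straightforward computation in Fermi coordinates, using the Christoffel symbol formulas (\ref{e:christ-u}) and (\ref{e:christ-i}) and the Codazzi and Bianchi identities already cited in the text. The three parts are cumulative: (b) differentiates the identity proved in (a), and (c) combines (b) with the Weyl decomposition of $R$ and with the assumption $\mathcal{C}=0$.

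For (a), since $g_{\alpha n}=0$ in Fermi coordinates, the scalar-curvature term in the Schouten tensor drops out of $A_{\alpha n}$, so $A_{\alpha n}=\frac{1}{n-2}R_{\alpha n}$. I would then write $R_{\alpha n}=g^{\beta\gamma}R_{\gamma\beta\alpha n}$ (the $R^{n}{}_{n\alpha n}$ piece vanishes by antisymmetry) and apply the Codazzi identity $R_{\alpha\beta\gamma n}=L_{\alpha\gamma,\beta}-L_{\beta\gamma,\alpha}$ to reduce it to tangential derivatives of $L$. Substituting $L_{\alpha\beta}=\mu g_{\alpha\beta}$ collapses the trace and leaves $R_{\alpha n}$ proportional to $\mu_{\tilde\alpha}$, yielding (a) up to the sign convention used in (T0).

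For (b), I would take a tangential covariant derivative of the identity $A_{\alpha n}=\mu_{\tilde\alpha}$. The key technical point is distinguishing the covariant derivative $A_{\alpha n,\beta}$ in $g$ from the induced covariant derivative $\mu_{\tilde\alpha\tilde\beta}$; by (\ref{e:christ-i}) the tangential Christoffel symbols agree, so only the mixed-index Christoffel symbols contribute. Expanding $A_{\alpha n,\beta}=\partial_\beta A_{\alpha n}-\Gamma^k_{\beta\alpha}A_{kn}-\Gamma^k_{\beta n}A_{\alpha k}$ and plugging in (\ref{e:christ-u}) produces exactly the two error terms $-\mu g_{\alpha\beta}A_{nn}$ and $+\mu A_{\alpha\beta}$, which after rearrangement give (b).

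For (c), the Weyl decomposition $R_{ijkl}=\mathcal{W}_{ijkl}+A_{ik}g_{jl}-A_{il}g_{jk}+A_{jl}g_{ik}-A_{jk}g_{il}$ specialized to $i=k=n$, $j=\alpha$, $l=\beta$ gives the first formula immediately once $\mathcal{W}=0$ is imposed, since $g_{n\alpha}=0$ and $g_{nn}=1$. For the second formula, the vanishing of the Cotton tensor gives $A_{\alpha\beta,n}=A_{\alpha n,\beta}$, which lets me replace the $A_{\alpha n,\beta}$ term on the right-hand side of (b). Using the first part of (c) to trade $A_{nn}g_{\alpha\beta}$ for $R_{\alpha n\beta n}-A_{\alpha\beta}$ and collecting like terms produces the stated identity $A_{\alpha\beta,n}-2\mu A_{\alpha\beta}=\mu_{\tilde\alpha\tilde\beta}-R_{\alpha n\beta n}\mu$.

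None of the three steps presents a serious obstacle; the only thing to be careful about is bookkeeping between the full connection and the induced connection, and the sign conventions for the inner normal, the second fundamental form, and the Riemann tensor implicit in the Codazzi equation as stated. Everything else is algebra.
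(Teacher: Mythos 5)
Your proposal is correct and follows essentially the same route as the paper's proof: Codazzi plus the trace to get $R_{\alpha n}=(n-2)\mu_\alpha$ for (a), expansion of the covariant derivative with the Fermi-coordinate Christoffel symbols $\Gamma^n_{\alpha\beta}=\mu g_{\alpha\beta}$, $\Gamma^\gamma_{\alpha n}=-\mu\delta^\gamma_\alpha$ together with $\tilde\Gamma^\gamma_{\alpha\beta}=\Gamma^\gamma_{\alpha\beta}$ for (b), and the Weyl decomposition specialized to $R_{n\alpha n\beta}$ plus $\mathcal{C}=0$ substituted into (b) for (c). The bookkeeping you flag as the only potential hazard is indeed the whole content of (b), and your identification of the two error terms $-\mu g_{\alpha\beta}A_{nn}$ and $+\mu A_{\alpha\beta}$ coming from $\Gamma^n_{\beta\alpha}$ and $\Gamma^\gamma_{\beta n}$ respectively matches the paper exactly.
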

 \begin{proof} By the Codazzi equation,
  we get $R_{\ga n}= (n-2) \mu_{\ga}$ and $A_{\ga n}= \mu_{\ga}.$

 For (b), we  use (a), (\ref{e:christ-u}) and (\ref{e:christ-i}) to get
  $$
   \mu_{\tilde{\ga} \tilde{\gb}} = \de_{\gb} A_{\ga n} -
   \Gamma_{\ga \gb}^{\gc} \mu_{\gc}= (A_{\ga n, \gb} + \Gamma_{\ga
   \gb}^l A_{l n} + \Gamma_{\gb n}^l A_{\ga l}) - \Gamma_{\ga
   \gb}^{\gc} \mu_{\gc}
   = A_{\ga n, \gb}+ A_{nn} \mu g_{\ga \gb}- A_{\ga
   \gb}\mu.
  $$

  For (c),  using the curvature decomposition formula
  $R_{ijkl} = \mathcal{W}_{ijkl} + A_{ik} g_{jl} + A_{jl} g_{ik}
 - A_{il} g_{jk} - A_{jk} g_{il},$ we first get
  $ R_{n \ga n \gb}=  A_{nn} g_{\ga \gb}+ A_{\ga \gb}$
  when $\mathcal{W}= 0.$ If in addition $\mathcal{C}= 0,$ then
  $A_{\ga \gb, n}- 2\mu A_{\ga \gb}= A_{\ga n, \gb} - 2 \mu A_{\ga \gb}
  =A_{\ga n,\gb} + \mu A_{nn} g_{\ga \gb}
    - A_{\ga \gb}\mu -R_{\ga n \gb n}\mu. $
 \end{proof}
The next lemma will be used in proving Theorem~\ref{t:bdy} and
\ref{t:bdy1}.
\begin{lemma}
 Suppose $\de M$ is umbilic. Let $u$ satisfy $u_n = - \mu + \hat \mu
 e^{-u},$ where $\hat \mu$ is constant. Then we have
  \beq \label{e:nga}
 u_{n \ga} = - \mu_{\ga}+ \mu u_{\ga} - \hat \mu u_{\ga}
 e^{-u};
 \eeq
 \beq \label{e:ngagb} \begin{array}{ll}
 u_{\ga \gb n} &= (2 \mu -\hat \mu e^{-u}) u_{\ga \gb} - \mu u_{nn} g_{\ga \gb}
     + \hat \mu u_{\ga} u_{\gb} e^{-u}
     -\mu_{\tilde{\ga} \tilde{\gb}}
      + \mu_{\ga} u_{\gb} + \mu_{\gb} u_{\ga} \\
      & - \mu_{\gc} u_{\gc} g_{\ga \gb}
     + R_{n \gb \ga n} (- \mu + \hat \mu e^{-u}) - \mu (-\mu + \hat \mu e^{-u})^2 g_{\ga \gb}.
 \end{array}
 \eeq
 \end{lemma}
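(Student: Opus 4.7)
The plan is to derive both formulas by direct computation in the Fermi coordinate frame of Section~\ref{s:backgd}, starting from the boundary condition $u_n = -\mu + \hat{\mu}\, e^{-u}$ and using the umbilic Christoffel formulas (\ref{e:christ-u})--(\ref{e:christ-i}) together with the Ricci identity and the Codazzi equation.

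First I would prove (\ref{e:nga}) by differentiating the boundary condition tangentially: $\partial_\alpha(\partial_n u) = -\mu_\alpha - \hat{\mu}\, u_\alpha\, e^{-u}$. Converting the iterated partial to the covariant Hessian via $u_{n\alpha} = \partial_\alpha \partial_n u - \Gamma_{n\alpha}^k u_k$ and substituting $\Gamma_{n\alpha}^\beta = -\mu\, \delta_\alpha^\beta$ and $\Gamma_{n\alpha}^n = 0$ produces (\ref{e:nga}) at once.

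For (\ref{e:ngagb}), I would first compute the mixed third derivative $u_{n\alpha\beta}$ by tangentially differentiating (\ref{e:nga}), then convert $u_{n\alpha\beta}$ to $u_{\alpha\beta n}$ via the Ricci identity. Applying $\partial_\beta$ to (\ref{e:nga}) and using $u_{n\alpha\beta} = \partial_\beta u_{n\alpha} - \Gamma_{n\beta}^k u_{k\alpha} - \Gamma_{\alpha\beta}^k u_{nk}$ with (\ref{e:christ-u}) introduces the expected terms $\mu\, u_{\alpha\beta}$, $-\mu\, g_{\alpha\beta} u_{nn}$, and $-\tilde{\Gamma}_{\alpha\beta}^\gamma u_{n\gamma}$, together with another $\mu\, u_{\alpha\beta}$ from converting $\partial_\beta u_\alpha$ to $u_{\alpha\beta}$ (accounting for the factor $2\mu - \hat{\mu}\, e^{-u}$). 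The induced Christoffel symbols combine with $\partial_\beta \mu_\alpha$ to give $-\mu_{\tilde{\alpha}\tilde{\beta}}$ through (\ref{e:christ-i}), while the remaining combination $\tilde{\Gamma}_{\alpha\beta}^\gamma (\mu\, u_\gamma - \hat{\mu}\, u_\gamma e^{-u} - u_{n\gamma})$ collapses to $\tilde{\Gamma}_{\alpha\beta}^\gamma \mu_\gamma$ once (\ref{e:nga}) is substituted, and is absorbed into $-\mu_{\tilde{\alpha}\tilde{\beta}}$. The $g_{\alpha\beta}$ terms proportional to $\mu \cdot u_n$ and $\hat\mu\, u_n e^{-u}$ combine to $-\mu\,(-\mu+\hat\mu\, e^{-u})^2 g_{\alpha\beta}$ after substituting the boundary condition. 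Finally, I apply the Ricci identity $u_{\alpha\beta n} - u_{\alpha n \beta} = -R^l{}_{\alpha n \beta} u_l$; the $l=n$ component yields $R_{n\beta\alpha n} u_n$ by the symmetries $R_{n\beta\alpha n} = -R_{n\alpha n\beta}$, and the $l=\gamma$ component is simplified by Codazzi, which for an umbilic boundary gives $R_{\delta\alpha n\beta} = -\mu_\alpha g_{\delta\beta} + \mu_\delta g_{\alpha\beta}$, producing the $\mu_\alpha u_\beta - \mu_\gamma u_\gamma g_{\alpha\beta}$ terms that are not present in $u_{n\alpha\beta}$.

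The main difficulty is bookkeeping: tracking the several algebraic cancellations and applying the Ricci identity with the correct sign. Care is needed in using (\ref{e:nga}) twice (once to eliminate $u_{n\gamma}$ against $\tilde{\Gamma}_{\alpha\beta}^\gamma$, and again to replace $u_n$ by $-\mu + \hat{\mu}\, e^{-u}$ in the $g_{\alpha\beta}$ terms) and in matching the Codazzi-induced tangential curvature contribution with the correct term in (\ref{e:ngagb}).
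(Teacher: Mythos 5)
Your proposal follows the same route as the paper: prove (\ref{e:nga}) by differentiating the boundary condition tangentially and passing to the covariant Hessian via (\ref{e:christ-u}); then compute $u_{n\alpha\beta}$ by differentiating (\ref{e:nga}) with the umbilic Christoffel symbols and the induced connection, and finally commute to $u_{\alpha\beta n}$ with the Ricci identity, using the Codazzi equation $R_{\ga\gb\gc n} = \mu_\gb g_{\ga\gc} - \mu_\ga g_{\gb\gc}$ to evaluate the tangential curvature terms. The bookkeeping you describe (the cancellation of the $\tilde\Gamma_{\ga\gb}^\gc u_{n\gc}$ terms, the combination of the $g_{\ga\gb}u_n$ terms into $-\mu u_n^2 g_{\ga\gb}$, the splitting of $-R^{l}{}_{\ga n\gb}u_l$ into its $l=n$ and $l=\gc$ pieces) is exactly the content of the paper's computation, and each step checks out.
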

 \bpf
  By (\ref{e:christ-u}),
  $u_{n \ga} = \de_{\ga} u_n - \Gamma^j_{\ga n} u_j = \de_{\ga} u_n + \mu u_{\ga}
   = - \mu_{\ga} - \hat \mu u_{\ga} e^{-u} + \mu u_{\ga}.$
  For (\ref{e:ngagb}), by (\ref{e:christ-u}) and (\ref{e:christ-i})
  $
   u_{n \ga \gb} = \de_{\gb} u_{n \ga} - \Gamma^j_{\gb n} u_{j
   \ga}- \Gamma^j_{\ga \gb} u_{n j} = \de_{\gb} u_{n \ga} + \mu
   u_{\gb \ga}- \tilde{\Gamma}^{\gc}_{\ga \gb} u_{n \gc} -\mu u_{nn} g_{\ga
   \gb}.
  $ Now by  (\ref{e:nga}), (\ref{e:christ-u}) and $u_n = - \mu + \hat \mu
  e^{-u},$
   \begin{align}
   u_{n \ga \gb} &=  u_{n \tilde{\ga} \tilde{\gb}} + \mu
   u_{\gb \ga} -\mu u_{nn} g_{\ga
   \gb}\notag\\
     &=  - \mu_{\tilde{\ga} \tilde{\gb}} - \hat \mu u_{\ga \gb}
   e^{-u} + \hat\mu u_{\ga} u_{\gb} e^{-u}+ \mu_{\gb} u_{\ga} + 2\mu u_{\ga \gb}
       - \mu u_{nn} g_{\ga\gb} - \mu (- \mu + \hat \mu e^{-u})^2 g_{\ga \gb}
      \notag.
   \end{align}
   On the other hand,  using the Codazzi equation gives
   $u_{\ga \gb n} = u_{n \ga \gb} + R_{n \gb \ga j} u_j  
      =  u_{n \ga \gb} + \mu_{\ga} u_{\gb} - \mu_{\gc} u_{\gc} g_{\ga \gb} + R_{n \gb
\ga n} (-\mu + \hat \mu e^{-u}).
   $
  Combing above formulas yields (\ref{e:ngagb}).
  \epf
 The last lemma of this section is a boundary version of the
 Bianchi identity.
\begin{lemma} \label{l:bdy1}
 Suppose that the boundary $\de M$ is umbilic and under a
conformal change $\hat g= e^{-2u} g$, $\hat L_{\ga \gb} = 0$ near
a boundary point $x_0.$ Then
 $ g^{\ga \gb}\hat A _{\ga \gb ,n} = 2 \mu g^{\ga \gb}
 \hat A_{\ga \gb}$ at $x_0.$
\end{lemma}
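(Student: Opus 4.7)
The plan is a direct computation in $g$-Fermi coordinates, based on the conformal-change formula (\ref{e:schten}), the tangential differentiation identities (\ref{e:nga})--(\ref{e:ngagb}), and the once-contracted second Bianchi identity for the Schouten tensor of $g$. First, the hypothesis $\hat L_{\alpha\beta}=0$ in a boundary neighborhood of $x_0$, combined with the umbilic transformation law $\hat L_{\alpha\beta}\,e^u=(u_n+\mu)g_{\alpha\beta}$, gives $u_n=-\mu$ (i.e.\ $\hat\mu=0$) there, so (\ref{e:nga}) and (\ref{e:ngagb}) are available with $\hat\mu=0$. Since $u_{\alpha\beta}=u_{\tilde{\alpha}\tilde{\beta}}-L_{\alpha\beta}u_n=u_{\tilde{\alpha}\tilde{\beta}}+\mu^{2}g_{\alpha\beta}$ on the umbilic boundary, tracing (\ref{e:schten}) with $g^{\alpha\beta}$ gives
\[
 g^{\alpha\beta}\hat A_{\alpha\beta}=\widetilde{\Delta} u+\tfrac{n-1}{2}\mu^{2}+\tfrac{3-n}{2}|\widetilde{\nabla} u|^{2}+g^{\alpha\beta}A_{\alpha\beta}.
\]

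Next, I would differentiate (\ref{e:schten}) in the normal direction, substituting (\ref{e:ngagb}) for $u_{\alpha\beta n}$, (\ref{e:nga}) for $u_{n\alpha}$, and $(|\nabla u|^{2})_{n}=-2\mu u_{nn}-2\mu_{\gamma}u^{\gamma}+2\mu|\widetilde{\nabla}u|^{2}$. The $\mu u_{nn}g_{\alpha\beta}$, $\mu_{\alpha}u_{\beta}+\mu_{\beta}u_{\alpha}$, and $\mu_{\gamma}u^{\gamma}g_{\alpha\beta}$ pieces cancel in opposite-sign pairs, so after contracting with $g^{\alpha\beta}$ and using the Riemann antisymmetry $g^{\alpha\beta}R_{n\beta\alpha n}=-R_{nn}$ one obtains
\[
 g^{\alpha\beta}\hat A_{\alpha\beta,n}=2\mu\widetilde{\Delta} u+(n-1)\mu^{3}-\widetilde{\Delta}\mu+\mu R_{nn}+(3-n)\mu|\widetilde{\nabla} u|^{2}+g^{\alpha\beta}A_{\alpha\beta,n}.
\]
Subtracting $2\mu$ times the first display from the second, every term containing $u$ cancels, and the claim reduces to the purely metric identity
\[
 g^{\alpha\beta}A_{\alpha\beta,n}-2\mu\,g^{\alpha\beta}A_{\alpha\beta}=\widetilde{\Delta}\mu-\mu R_{nn}.
\]

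To prove this reduced identity, I would apply the contracted Bianchi identity $\nabla^{j}A_{ij}=\nabla_{i}(\operatorname{tr}_{g}A)$ with $i=n$. In Fermi coordinates the $A_{nn,n}$ term appears on both sides and cancels, leaving $g^{\alpha\beta}A_{\alpha\beta,n}=g^{\alpha\beta}A_{\alpha n,\beta}$. Plugging Lemma~\ref{l:bdy}(b) into the right side yields $g^{\alpha\beta}A_{\alpha\beta,n}=\widetilde{\Delta}\mu-(n-1)\mu A_{nn}+\mu\,g^{\alpha\beta}A_{\alpha\beta}$, and the algebraic relation $(n-1)A_{nn}+g^{\alpha\beta}A_{\alpha\beta}=(n-2)A_{nn}+\operatorname{tr}_{g}A=R_{nn}$ (using $\operatorname{tr}_{g}A=R/[2(n-1)]$ together with $R_{nn}=(n-2)A_{nn}+R/[2(n-1)]$) converts this to exactly what is needed.

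The main obstacle is careful sign bookkeeping. In particular, one must watch for (i) the Riemann antisymmetry giving $g^{\alpha\beta}R_{n\beta\alpha n}=-R_{nn}$, (ii) the cancellation of the $\mu u_{nn}g_{\alpha\beta}$ term produced by (\ref{e:ngagb}) against the $\mu u_{nn}g_{\alpha\beta}$ term created by $-\tfrac{1}{2}(|\nabla u|^{2})_{n}g_{\alpha\beta}$, and (iii) the conversion of Schouten data into the Ricci term $R_{nn}$ via $\operatorname{tr}_{g}A=R/[2(n-1)]$. No essentially new idea is needed beyond (\ref{e:schten}), (\ref{e:nga})--(\ref{e:ngagb}), Lemma~\ref{l:bdy}(b), and the standard Bianchi identity on $g$.
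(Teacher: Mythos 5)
Your proof is correct, but it takes a genuinely different route from the paper's. The paper works entirely inside the hatted geometry: from $\hat L_{\ga\gb}=0$ and the Codazzi equation it deduces $\hat A_{\ga n}=0$ on the boundary and that the $\hat g$-Christoffel symbols $\hat\Gamma^n_{\ga\gb}$, $\hat\Gamma^\gc_{\ga n}$ vanish there, so that $\hat\nabla_\gb\hat R_{\ga n}=0$; a single contraction of the second Bianchi identity for $\hat g$ then yields $\hat g^{\ga\gb}\hat\nabla_n\hat A_{\ga\gb}=0$, which is converted back to the $g$-covariant derivative via \eqref{e:christ-u} to give the claim. You instead expand $\hat A_{\ga\gb}$ and $\hat A_{\ga\gb,n}$ explicitly through \eqref{e:schten}, \eqref{e:nga}, \eqref{e:ngagb} with $\hat\mu=0$, observe the full cancellation of the $u$-dependent terms, and reduce the statement to the $u$-free identity $g^{\ga\gb}A_{\ga\gb,n}-2\mu\,g^{\ga\gb}A_{\ga\gb}=\widetilde{\Delta}\mu-\mu R_{nn}$, proved by applying the contracted Bianchi identity to the background metric $g$ together with Lemma~\ref{l:bdy}(b). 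Both routes ultimately invoke the once-contracted Bianchi identity, but the paper's argument — applying it in the $\hat g$-geometry where the boundary is totally geodesic — is shorter and conceptually cleaner, whereas yours has the virtue of making the $u$-cancellations visible and of isolating the purely Riemannian boundary identity $g^{\ga\gb}A_{\ga\gb,n}-2\mu g^{\ga\gb}A_{\ga\gb}=\widetilde{\Delta}\mu-\mu R_{nn}$ (valid on any umbilic boundary), which is of some independent interest. All the cancellations and the final algebraic conversion $(n-1)A_{nn}+g^{\ga\gb}A_{\ga\gb}=(n-2)A_{nn}+\operatorname{tr}_g A=R_{nn}$ in your write-up check out.
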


\begin{proof}

 We denote the covariant differentiation with respect to the new
 metric $\hat g$ by $\hat \gra.$ Since $\hat L_{\ga \gb}= 0,$ by the Codazzi equation
 $ \hat R_{\ga \gb \gc n} =0.$
 Therefore, we have
  $\hat R_{\ga n} = 0$ and $\hat A_{\ga n}= 0$ at $x_0.$
 Hence,
 $\hat \gra_{\gb} \hat R_{\ga n} = \de_{\gb} \hat R_{\ga n} - \hat
\Gamma_{\gb \ga}^k \hat R_{k n} - \hat \Gamma_{\gb n}^k \hat
R_{\ga k} = - \hat \Gamma_{\gb \ga}^n \hat R_{n n} - \hat
\Gamma_{\gb n}^{\gc} \hat R_{\ga \gc}.$
 By (\ref{e:christ-u}), both $\hat \Gamma_{\gb \ga}^n$ and $\hat \Gamma_{\gb n}^{\gc}$
are zero. Thus, we have $\hat \gra_{\gb} \hat R_{\ga n}= 0.$

On the other hand, by the Bianchi identity,
  $0 = \hat \gra_n \hat R_{i \ga k \gb} + \hat \gra_k \hat R_{i \ga \gb n} +
 \hat \gra_{\gb} \hat R_{i\ga n k}.$ Contracting indices $i$ and
 $k$ gives
   $ 0= \hat \gra_n \hat R_{\ga \gb} + \hat g^{ik} \hat \gra_k \hat R_{i \ga \gb
 n} - \hat \gra_{\gb} \hat R_{\ga n}.$
 Noting that $\hat \gra_{\gb} \hat R_{\ga n}= 0$ and $\hat g^{\ga n}
 =0$, contract indices $\ga$ and $\gb$ to get
 $0 = \hat g^{\ga \gb} \hat \gra_n \hat R_{\ga \gb} - \hat g^{ik} \hat \gra_k \hat R_{in}
   = \hat g^{\ga \gb} \hat \gra_n \hat R_{\ga \gb} - \hat g^{nn} \hat \gra_n \hat R_{nn}.$
 Therefore,
 \beq \label{e:A_gagbn}
 \hat g^{\ga \gb} \hat \gra_n \hat A_{\ga \gb} = \frac{1}{(n-2)}
(\hat g^{\ga \gb}
 \hat \gra_n \hat R_{\ga \gb}- \frac{1}{2} \hat R_n) =
 \frac{1}{2 (n-2)} (\hat g^{\ga \gb}
 \hat \gra_n \hat R_{\ga \gb}- \hat g^{nn} \hat \gra_n \hat R_{nn})= 0.
 \eeq
 Using $\hat A_{\ga n}= 0$, $\hat \Gamma_{\ga \gb}^n= \hat \Gamma_{\gb n}^{\ga}=0$
 and  (\ref{e:christ-u}), we finally arrive at
 $$
 0 = \hat g^{\ga \gb} \hat \gra_n \hat A_{\ga \gb} = 
    \hat g^{\ga \gb} \de_n \hat A_{\ga \gb}
    = \hat g^{\ga \gb} (\hat A_{\ga \gb , n}+  \Gamma_{n \ga}^k \hat A_{k \gb}
    + \Gamma_{n \gb}^k \hat A_{k \ga}) 
    = \hat g^{\ga \gb} (\hat A_{\ga \gb ,n}
   - 2 \mu \hat A_{\ga \gb}).
 $$
\end{proof}

\section{Four-manifolds} \label{s:4mfd}

 In this section, we only consider $n=4.$ We prove Theorem~\ref{t:main'} and Corollary~\ref{c:ccem}.
 The proof of Theorem~\ref{t:main'} consists of two propositions:
 \begin{proposition}\label{p:pos}
 Let $( M, g)$ be  a  compact connected four-manifold with umbilic
 boundary. If  $Y(M, \de M, [g])$ and
 $\int_M \gs_2 + \frac{1}{2} \oint_{\de M} \mathcal{B}_g$ are both
 positive, then there exists a metric $\hat g \in [g]$ such that
 $ R_{\hat g} > 0$, $\gs_2(A_{\hat g}) > 0,$ and the boundary is totally geodesic.
 \end{proposition}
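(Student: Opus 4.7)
The proof splits naturally into two stages, one for each positivity hypothesis. In the first stage, apply Escobar's solution of the Yamabe problem for manifolds with boundary: since $Y(M,\de M,[g])>0$, there exists $g_1\in[g]$ with $R_{g_1}$ equal to a positive constant and $h_{g_1}\equiv 0$. Combined with the umbilic hypothesis $L_{\ga\gb}=\mu g_{\ga\gb}$, the identity $h_{g_1}=3\mu$ (valid in dimension $4$) forces $\mu\equiv 0$, so $g_1$ has totally geodesic boundary. Inspecting the defining formula $\mathcal{B}_g=\tfrac12 Rh-R_{nn}h-R_{\gc\ga\gc\gb}L^{\ga\gb}+\tfrac13 h^3-h|L|^2+\tfrac23\operatorname{tr}L^3$, every term contains a factor of $h$ or a component of $L$, hence $\mathcal{B}_{g_1}\equiv 0$ on $\de M$. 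Conformal invariance of $\int_M\gs_2+\tfrac12\oint\mathcal{B}$ then yields $\int_M\gs_2(A_{g_1})>0$.

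In the second stage, I would deform $g_1$ inside $[g]$ to a metric $\hat g=e^{-2u}g_1$ with $\gs_2(A_{\hat g})>0$ pointwise, preserving $R_{\hat g}>0$ and totally geodesic boundary. Because $\mu_{g_1}=0$, maintaining the totally geodesic condition on the boundary reduces to the Neumann condition $\partial u/\partial n=0$. Following the strategy of Chang-Gursky-Yang in the closed case, one natural route is to solve a fourth-order Paneitz-type equation for $u$ equipped with Chang-Qing-type boundary operators adapted to the totally geodesic setting; an alternative is a continuity method $F_t(A_{\hat g})=c_t$, $t\in[0,1]$, interpolating between the trivial problem at $t=0$ (with $u\equiv 0$) and a suitable $\gs_2$ equation at $t=1$, with the Neumann BC held fixed along the path. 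The integral hypothesis $\int_M\gs_2(A_{g_1})>0$ is what makes the variational problem coercive (or the final value $t=1$ attainable). Once a solution $u$ is in hand, pointwise positivity of $\gs_2(A_{\hat g})$ follows from a maximum principle argument: at an interior minimum the equation together with $R_{\hat g}>0$ contradicts the minimum being nonpositive, while a boundary minimum is excluded by the Hopf lemma applied to the Neumann condition $\partial_n u=0$ combined with the totally geodesic structure and Lemma~\ref{l:bdy1}.

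The main obstacle lies in the second stage, specifically in the boundary analysis. Oblique/Neumann-type boundary conditions for fully nonlinear (or fourth-order) equations are much more delicate than Dirichlet-type, and controlling $u$ uniformly along the deformation requires genuinely new estimates. One would rely on the boundary $C^2$ estimates of Theorems~\ref{t:bdy} and \ref{t:bdy1} to close the continuity method, supplemented by a Moser-Trudinger-type or direct integral estimate to bound $u$ from below and prevent degeneration at the endpoint. Positivity of $R_{\hat g}$ along the path is a separate point that must be monitored, but it is protected by the positivity of $Y(M,\de M,[g])$, which is a conformal invariant and controls $\gs_1$ along the deformation.
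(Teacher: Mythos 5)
Your Stage 1 matches the paper exactly: take a Yamabe metric from Escobar, note that the umbilic condition forces the boundary to be totally geodesic, observe $\mathcal{B}$ vanishes there, and use conformal invariance to deduce $\int_M \gs_2(A_{g_1}) > 0$. Your Stage 2 is in the right spirit (continuity method with the Neumann condition $\partial u/\partial n = 0$, boundary estimates from Theorem~\ref{t:bdy1}), but it has several genuine gaps.

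First, you do not pin down the deformation. The paper uses the Gursky--Viaclovsky tensor $A^t = A + \tfrac{1-t}{2}(\operatorname{tr} A)g$ run over $t \in [-\Theta, 1]$, where $\Theta$ is chosen large enough that $A^{-\Theta} = \tfrac12(Ric + \tfrac{\Theta}{6}R\,g)$ is positive definite and hence in $\Gamma^+_2$. Your suggestion that "the trivial problem at $t=0$ has $u \equiv 0$ as a solution" is wrong for this deformation: at $t=0$ the tensor is $A^0 = A + \tfrac12(\operatorname{tr}A)g$, which a priori need not lie in $\Gamma^+_2$, so $u\equiv 0$ need not be admissible. The whole point of pushing $t$ down to $-\Theta$ is to manufacture a starting point in the cone. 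Without this device there is no reason the continuity set is nonempty. (The deformation you describe with $t\in[0,1]$ and a normalization term is the one used later for Proposition~\ref{p:f}, via a degree argument, not here.)

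Second, you attribute the final positivity $\gs_2(A_{\hat g}) > 0$ and $R_{\hat g} > 0$ to a maximum-principle/Hopf-lemma step after solving. In the paper these positivities are not established at the end; they are carried along the deformation. The continuity set is defined with the constraint $\hat A^t \in \Gamma^+_2$, so at $t=1$ one has $\hat A \in \Gamma^+_2$, which directly gives $\gs_2 > 0$ and, since $\Gamma^+_2 \subset \Gamma^+_1$, also $\gs_1(\hat A) > 0$, i.e.\ $R_{\hat g} > 0$. Relatedly, your claim that positivity of $R_{\hat g}$ along the path "is protected by the positivity of $Y(M,\de M,[g])$" is not how the argument goes; the Yamabe constant is used only in Stage 1, and the cone inclusion does the work afterward.

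Third, the place where the positivity of the conformal invariant $\int_M \gs_2 + \tfrac12 \oint \mathcal{B}$ actually bites is the $C^0$ lower bound, and your proposal gestures at "coercivity" or a Moser--Trudinger argument without identifying the mechanism. The paper's argument is elementary: integrate the equation and use conformal invariance plus $\mathcal{B}_{\hat g} = 0$ (from the Neumann condition and Lemma~\ref{l:b=0}) to get
\[
C e^{4\sup_M u} \geq \int_M \gs_2(\hat A)\,dV_{\hat g} + \tfrac12 \oint_{\de M} \mathcal{\hat B}\,dS_{\hat g}
= \int_M \gs_2(A_g)\,dV_g + \tfrac12 \oint_{\de M} \mathcal{B}_g\,dS_g > 0,
\]
which, combined with the Harnack inequality, pins down $\|u\|_{C^0}$. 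This is the one step that cannot be replaced by a generic coercivity assertion, and it needs to be made explicit for the argument to close.

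Finally, the fourth-order Paneitz route you float as an alternative is not what the paper does and would require developing a separate boundary theory; the continuity method with Theorem~\ref{t:bdy1}(b) is the path actually taken.
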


 \begin{proposition} \label{p:f}
 Suppose $(M, g)$ is a compact connected four-manifold with
 totally geodesic boundary. If $R_g> 0$, $\gs_2(A_g)> 0,$ and
 $(M, g)$ is not conformally equivalent to $(\mathbb{S}^+_4, g_c),$
 then given a positive function $f$ there exists a metric
 $\hat g \in [g]$ such that  $\gs_2(A_{\hat g}) = f $  and
  $\mathcal{B}_{\hat g}$ is zero.
 \end{proposition}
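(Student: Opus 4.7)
The plan is a continuity method. Since $g$ has totally geodesic boundary we have $\mu_g \equiv 0$, so writing $\hat g = e^{-2u}g$ the boundary formula $\hat\mu\, e^{-u} = \partial u/\partial n + \mu_g$ reduces to $\hat\mu\, e^{-u} = \partial u/\partial n$; aiming for $\hat h = 0$ (equivalently, via Lemma~\ref{l:b=0} with $A_{\hat g}\in\Gamma_2^+$, for $\mathcal{B}^2_{\hat g} = 0$) means we impose the pure Neumann condition $\partial u/\partial n = 0$. The problem becomes
\begin{equation*}
\sigma_2^{1/2}\bigl(\nabla^2 u + du\otimes du - \tfrac12|\nabla u|^2 g + A_g\bigr) = f^{1/2}(x)\, e^{-2u} \ \text{in } M, \qquad \tfrac{\partial u}{\partial n} = 0 \ \text{on } \partial M.
\end{equation*}
I would interpolate by a smooth family of positive right-hand sides $f_t$, $t\in[0,1]$, with $f_0 = \sigma_2(A_g)$ (so $u_0 \equiv 0$ is a solution) and $f_1 = f$, and put $I = \{t\in[0,1] : \text{there exists a } C^{2,\alpha}\text{ solution } u_t \text{ with } A_{\hat g_t}\in\Gamma_2^+\}$; then $0 \in I$ and the objective is $I = [0,1]$.

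Openness of $I$ is the implicit function theorem in $C^{2,\alpha}$. At a solution the linearization is a concave, uniformly elliptic second-order operator with Neumann data; the zero-order term generated by $e^{-2u}$ has the ``wrong'' sign, but Fredholm theory for oblique problems reduces invertibility to triviality of the kernel, which can be enforced by passing to the volume-normalized (Lagrange-multiplier) formulation. Closedness reduces to a priori $C^{2,\alpha}$ bounds. Once a uniform $C^0$ bound is in place, the boundary $C^2$ bound is exactly Theorem~\ref{t:bdy1}(b) applied with $S = A_g$: its hypotheses $S_{\alpha n} = 0$ and $g^{\alpha\beta}(S_{\alpha\beta, n} - 2\mu S_{\alpha\beta}) \leq 0$ hold on a totally geodesic boundary by the Codazzi equation and Lemma~\ref{l:bdy1} (with $\mu\equiv 0$ the second reduces to $g^{\alpha\beta}A_{\alpha\beta, n} = 0$, a boundary Bianchi identity). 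Combined with the standard interior $C^2$ theory for $\sigma_2$ in $\Gamma_2^+$, this yields a global $C^2$ bound; Evans-Krylov and the Lieberman-Trudinger oblique boundary theory \cite{LieT86} then upgrade to $C^{2,\alpha}$.

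The main obstacle is the uniform $C^0$ estimate, and this is where the non-hemisphere hypothesis enters. For the upper bound I would argue by contradiction. If $\sup u_{t_i}\to+\infty$, pick maximum points $x_i$ and rescale conformally with scale $\lambda_i = e^{u_{t_i}(x_i)}$ in Fermi coordinates at $x_i$. Depending on whether $\lambda_i \, \mathrm{dist}(x_i, \partial M)$ tends to infinity or stays bounded, extract a subsequential limit $u_\infty$ that is an entire positive $\sigma_2$-solution on $\mathbb{R}^4$ or on the half-space $\mathbb{R}^4_+$ with zero Neumann data. The Liouville-type rigidity of Li--Li (interior) and its Neumann half-space analogue identify $u_\infty$ with the standard conformal factor on $(\mathbb{S}^4, g_c)$ or $(\mathbb{S}^+_4, g_c)$; an energy/volume accounting using the conformal invariance and positivity of $\int_M \sigma_2 + \tfrac12 \oint_{\partial M} \mathcal{B}$ forces the entire mass to be captured by a single bubble, making $(M,g)$ conformal to $\mathbb{S}^4$ or $\mathbb{S}^+_4$ and contradicting the hypothesis. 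The lower bound $\inf u \geq -C$ follows from positivity of $Y(M, \partial M, [g])$ by a Moser iteration on $e^{-u}$. The hardest step is the boundary blow-up: properly identifying the Neumann limit problem on $\mathbb{R}^4_+$ and invoking its rigidity classification, which is precisely what singles out the hemisphere as the unique obstruction.
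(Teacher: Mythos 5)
Your overall architecture (continuity/degree argument; interior vs.\ boundary blow-up dichotomy; Liouville-type rigidity on $\mathbb{R}^4$ or $\mathbb{R}^4_+$; conformal-invariant accounting to rule out the bubble, with the hemisphere as the borderline case) matches the paper's strategy, but several of the working parts are set up in the wrong direction or would not close.

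First, the hard $C^0$ estimate goes the other way. With the convention $\hat g = e^{-2u}g$ used here, the metric $\hat g$ concentrates (a bubble forms) where $u \to -\infty$, not $+\infty$. The paper rescales at $p_i$ with $\epsilon_i = e^{\inf u_i} \to 0^+$ and $\tilde u^i = u^i - \ln\epsilon_i \ge 0$, and it is precisely the failure of $\inf u \ge -C$ that produces a limiting solution on $\mathbb{R}^4$ or $\mathbb{R}^4_+$ (by reflection the half-space case reduces to the whole-space Liouville theorem of \cite{CGY02a}); the captured $\sigma_2$-mass ($\ge 4\pi^2$ interior, $\ge 2\pi^2$ boundary) then contradicts the sharp bound $\int_M\sigma_2 + \tfrac12\oint\mathcal{B} \le 2\pi^2$ with equality only for the hemisphere (Lemma~\ref{l:conf}). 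The bound $\inf u \le C$, by contrast, is the elementary one: apply the equation at a minimum of $u$ where $\nabla u = 0$ and $\nabla^2 u \ge 0$ (using the Neumann condition to handle a boundary minimum). Your proposal treats $\sup u$ as the blow-up direction and claims the lower bound follows by Moser iteration from $Y(M,\partial M,[g])>0$; the Moser-iteration claim would bound $e^{-u}$ in $L^\infty$ from $L^p$ control, which is precisely what fails when the metric concentrates, and it is the direction where the hemisphere obstruction must intervene.

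Second, your openness step has a gap. For the equation with right-hand side $f e^{-2u}$, the linearization acquires a zero-order term of the non-coercive sign, and there is no volume-normalization gauge available for $\sigma_2(A_{\hat g}) = f$ with $f$ prescribed, so passing to a ``Lagrange-multiplier'' subspace does not restore invertibility. (That trick works for the $\sigma_2 = \text{const}$ problem, not for prescribed $f$.) This is why the paper replaces the implicit function theorem by a Leray-Schauder degree argument with a more elaborate homotopy: the tensor $S_g = (1-\zeta(t))(\tfrac{1}{\sqrt6}V^{2/5} g - A_g)$ and the nonlocal term $(1-t)(\int_M e^{-5u})^{2/5}$ are introduced so that at $t=0$ the equation has $u\equiv 0$ as the unique solution with a tractable linearization, and the degree can be computed there. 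A straight interpolation of right-hand sides $f_t$, as you propose, does not give you this foothold.

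Finally a smaller slip: to invoke Theorem~\ref{t:bdy1}(b) you would set $S = 0$ (or the deformation tensor $S_g$), not $S = A_g$ — the Schouten tensor $A^t$ is already built into equation (\ref{e:bdy1}), so taking $S = A_g$ double-counts it. The verification you cite for the hypotheses on $S$ (Lemma~\ref{l:bdy}(a) for $S_{\alpha n}=0$ and the boundary Bianchi identity (\ref{e:A_gagbn}) for $g^{\alpha\beta}(S_{\alpha\beta,n}-2\mu S_{\alpha\beta})\le 0$) is, however, the right check for $S_g$.
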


 We will prove Propositions~\ref{p:pos} and
 \ref{p:f} in Subsections~\ref{subs:pos} and \ref{subs:f},
 respectively.

 \subsection{Conformal Metric Satisfying $\gs_2 > 0$ } \label{subs:pos}

  We will deform a Yamabe metric to the one satisfying the
  properties in Proposition~\ref{p:pos}. The deformation
  comes from a nice idea by Gursky-Viaclovsky \cite{GV03} for closed
  four-manifolds.
 \begin{proof}[Proof of Proposition~\ref{p:pos}]
 Let the background metric $g$ be a Yamabe metric. Thus, we have $R_g$ is
 a positive constant and the boundary is totally geodesic.

 Let $A^t =  A+ \frac{1-t}{2} (tr_g  A)g.$
    Let $\hat g = e^{-2u}g.$ For $n= 4,$ the tensor $\hat A^t$ satisfies
  $ \hat A^t
  = \hess u + du \otimes du  - \frac{1}{2} |\gra u|^2 g + \frac{1-t}{2}(\Delta u
  -|\gra u|^2 )g + A^t.
 $
  We can choose a large number $\Theta$
  such that
 $ A^{-\Theta} = \frac{1}{2} (Ric_g +\frac{\Theta}{6} R_g g) $ is positive
 definite. Let $f(x) = \gs_2^{\frac{1}{2}} (A^{-\Theta}_g)$.
 Thus,  $A^{-\Theta}_g \in \Gamma_2^+$ and $f$ is positive.
Consider the following path of equations for $- \Theta \leq t \leq
1:$
 \begin{equation} \label {e:pos}\left\{  \begin{array}{ll}
 \gs_2^{\frac{1}{2}} (\hess u + du \otimes du  - \frac{1}{2} |\gra u|^2 g + \frac{1-t}{2}(\Delta u
  -|\gra u|^2 )g + A^t_g)= f(x) e^{2u} & in \, M  \\
  \frac{\de u}{\de n} = 0 & on \,\de M.
  \end{array}\right .
  \end{equation}

 Let $\mathcal{S} = \{ t\in [-\Theta, 1]: \exists$ a solution $u \in C^{2, \ga}(M)$
 to (\ref{e:pos}) with $ \hat A^t \in \Gamma^+_2\}.$ At $t= -\Theta$, we have
 $u \equiv 0$ is a solution and $A^{-\Theta}_g \in \Gamma^+_2$. Therefore,
 $\mathcal{S}$ is nonempty.  Consider the
 linearized operator $\mathcal{P}^t: C^{2, \ga}(M) \cap \{\frac{\de u}{\de n} = 0$ on $\de M\}
 \rightarrow C^{\ga} (M).$ It was proved in \cite{GV03} (Proposition~2.2) that the
 linearized operator is elliptic with the strictly negative coefficient in the zeroth order term.
 By elliptic theory for Neumann 
 condition  \cite{GT}, the linearized operator is invertible.
 Hence,  $\mathcal{S}$ is open.
 If $\mathcal{S}$ is also closed, then we
 have a solution $u$ to (\ref{e:pos}) at $t= 1$ with $\hat A^1 =
 \hat A \in \Gamma^+_2.$ This gives  $\hat g = e^{-2u} g$
 satisfying  $\gs_2 (\hat A) > 0,$ $\hat R> 0$ and  $\hat \mu = 0.$  
 Thus, it remains to establish a priori estimates for solutions to
 (\ref{e:pos}) independent of $t.$

 (1) $C^0$ estimates.

   At the maximal point $x_0$ of $u,$ if $x_0$ is in the
   interior, we have $|\gra u| = 0$. If $x_0$ is at the boundary,
   since $\frac{\de u}{\de n} = 0$, we also have
   $|\gra u| = 0.$ Therefore,  we get that $\hess u (x_0)$ is negative
   semi-definite and $\Delta u(x_0) \leq 0.$
   By Lemma~\ref{l:sigma} (c),
   $$
    f(x_0) e^{2u(x_0)} = \gs_2^{\frac{1}{2}}(g^{-1} \hat A^t)
    \leq \frac{\sqrt{6}}{4} \gs_1 (g^{-1} \hat A^t) 
     = \frac{\sqrt{6}}{4} (3-2t) \Delta u + \frac{\sqrt{6}}{4} tr_g \,A^t_g
     \leq \frac{\sqrt{6}}{4} tr_g \,A^t_g \leq C,
   $$
   where in the second inequality we use $t \leq 1$ and $\Delta u \leq 0.$
  Hence, $u$ is  upper bounded.

   Now we prove the Harnack inequality.
   Let $H = |\gra u|^2$. If the maximum of $H$ is in
 the interior, then $\gra H =0,$ and $\hess H$ is negative
 semi-definite. If the maximum of $H$ is at the boundary, since
 $\frac{\de u}{\de n} =0$ and $\mu_g = 0,$ we have $u_{\ga n} = 0$
  and $H_n = 2 u_{\ga}u_{\ga n} + 2 u_n u_{nn} =0.$
 Thus, we also have that $\gra H =0,$ and $\hess H$ is negative
 semi-definite. Interior gradient estimates for (\ref{e:pos})
 were proved in \cite{GV03} (Proposition 4.1). We remark that
 the same proof works for boundary gradient estimates. The reason is
 that   at the maximal point once we  have $\gra H = 0,$
 and $\hess H$ is negative semi-definite, then the rest of
 computations in \cite{GV03} is the same regardless of the point being in
 the interior or on the boundary. Therefore, we get $|\gra u|< C$.
 Thus, $\sup_M u \leq \inf_M u + C.$

  To prove that $\sup_M u$ is
 lower bounded, integrating the equation gives
  $$
   C e^{4 \sup_M u}   \geq \int_M f^2 e^{4u}dV_g = \int_M \gs_2
   (g^{-1} \hat A^t) dV_g  = \int_M \gs_2 ( \hat g^{-1}\hat A^t) dV_{\hat
   g},$$
  where in the second equality we use $ dV_{\hat g} = e^{-4u}
  dV_g.$ Note that $\gs_2 ( \hat g^{-1}\hat A^t)= \gs_2(\hat A)+
  \frac{3}{2} (1-t)(2-t) \gs^2_1(\hat A).$ Thus, the above formula
  becomes
  $$
    C e^{4 \sup_M u} \geq \int_M ( \gs_2(\hat A)+
    \frac{3}{2} (1-t)(2-t) \gs^2_1( \hat A)) dV_{\hat g}
    \geq \int_M \gs_2 ( \hat A)dV_{\hat g}.
  $$
   Recall that  the conformal invariant
   $\int_M \gs_2 + \frac{1}{2} \oint_{\de M} \mathcal{B}_g$ is
   positive. Since $\hat \mu = 0,$  by Lemma~\ref{l:b=0} we get
   $\mathcal{\hat B} = 0.$  Finally, we have
   $$
    C e^{4 \sup_M u} \geq
    \int_M \gs_2 ( \hat A) dV_{\hat g} + \frac{1}{2} \oint_{\de M}
    \mathcal{\hat B} \,dS_{\hat g}
     = \int_M \gs_2 (A_g)dV_g + \frac{1}{2} \oint_{\de M}
     \mathcal{B}_g \,dS_g >0.
   $$

 (2) $C^2$ estimates.

  Interior $C^2$ estimates are proved in \cite{Chen05}.
  To get boundary $C^2$ estimates, we use Fermi coordinates in a
  tubular neighborhood $\de M \times [0, \iota]$ of the boundary.
  Note that $\de M$ is compact so $\iota$ is a positive
  number. Thus, by Theorem~\ref{t:bdy1} (b) (with $S = 0$) we obtain boundary $C^2$ estimates
  in each half ball $\overline {B}^+_r.$ Since $\de M$ is compact,
  there are finitely many local charts of the tubular neighborhood.
   We then get the required estimates.

 (3) $C^{\infty}$ estimates.

  Once we have $C^2$ bounds, the equation is uniformly elliptic and
 concave. Higher order regularity follows
 by standard elliptic theories; see \cite{Evan82},\cite{Kry83} and  \cite{LT86}.
 \end{proof}


 \subsection{Conformal Metric Satisfying $\gs_2 = f$ } \label{subs:f}

  In this subsection, we proof Proposition~\ref{p:f}.  We first prove a lemma.
 \begin{lemma} \label{l:conf}
  Let $(M, g)$ be a compact four-manifold with
  umbilic boundary.  Suppose $Y(M, \de M, [g]) > 0.$ Then
  $$\int_M \gs_2 (A_g) + \frac{1}{2} \oint_{\de M}  \mathcal{B}_g \leq 2 \pi^2.$$
  Moreover, the equality holds if and only if $(M, g)$ is
  conformally equivalent to $(\mathbb{S}^4_+, g_c),$ where $g_c$ is the
  standard metric on the hemisphere.
 \end{lemma}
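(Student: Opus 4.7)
The plan is to reduce to a Yamabe representative, where the boundary integral vanishes automatically, then to control the remaining bulk integral by the Yamabe constant, and finally to apply the sharp upper bound coming from the hemisphere. First, since $Y(M,\partial M,[g])>0$, Escobar's theorem produces a Yamabe representative $g_0 \in [g]$ with $V_{g_0}=1$, scalar curvature $R_{g_0}$ constant and equal to $Y(M,\partial M,[g])$, and $h_{g_0}=0$ on $\partial M$. Because $\partial M$ is umbilic, $L_{\alpha\beta}=\mu g_{\alpha\beta}$ with $(n-1)\mu = h_{g_0}=0$, so $L\equiv 0$ in the metric $g_0$. Every term in the formula for $\mathcal{B}_{g_0}$ then vanishes---equivalently, by Lemma~\ref{l:b=0} together with the identity $\mathcal{B}=2\mathcal{B}^2$ on umbilic boundaries noted in the introduction---and hence $\oint_{\partial M}\mathcal{B}_{g_0}=0$.

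Next, I would use the dimension-four pointwise identity
$$\sigma_2(A_{g_0}) \;=\; \frac{R_{g_0}^2}{96}\;-\;\frac{|E_{g_0}|^2}{8},$$
where $E_{g_0}=Ric_{g_0}-\tfrac14 R_{g_0}\,g_0$ is the trace-free Ricci tensor; this comes directly from $A=\tfrac12(Ric-\tfrac R6 g)$ and $\sigma_2=\tfrac12(\sigma_1^2-|\cdot|^2)$. Integrating over $M$ with $V_{g_0}=1$ and $R_{g_0}$ constant gives $\int_M \sigma_2(A_{g_0}) \leq Y(M,\partial M,[g])^2/96$. Now I would appeal to Escobar's sharp inequality $Y(M,\partial M,[g])\leq Y(\mathbb{S}^4_+,\partial \mathbb{S}^4_+,[g_c])$. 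Scaling the round hemisphere ($V(g_c)=4\pi^2/3$, $R(g_c)=12$, totally geodesic boundary) to unit volume gives $Y(\mathbb{S}^4_+,\partial \mathbb{S}^4_+,[g_c])=8\pi\sqrt{3}$, so the right-hand side above is at most $(8\pi\sqrt{3})^2/96 = 2\pi^2$. Since $\int_M\sigma_2(A_g)+\tfrac12\oint_{\partial M}\mathcal{B}_g$ is a conformal invariant, the asserted inequality follows.

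For the equality statement, equality throughout forces $E_{g_0}\equiv 0$ (so $g_0$ is Einstein with totally geodesic umbilic boundary) together with $Y(M,\partial M,[g])=Y(\mathbb{S}^4_+,\partial\mathbb{S}^4_+,[g_c])$. The final and most delicate step is rigidity: the Obata-type classification for Einstein metrics with totally geodesic umbilic boundary---the content of the extremal case of Escobar's sharp Yamabe inequality on manifolds with boundary---identifies such a metric with the round hemisphere up to scaling, giving that $(M,g_0)$, and hence $(M,g)$, is conformally equivalent to $(\mathbb{S}^4_+,g_c)$. This concluding rigidity step is the main obstacle; everything upstream of it is either routine algebra or a direct appeal to the Yamabe theory for manifolds with boundary.
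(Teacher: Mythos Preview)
Your proof is correct and follows essentially the same route as the paper's: pass to a Yamabe metric with constant scalar curvature and totally geodesic boundary, use $\sigma_2(A)=\tfrac{1}{8}(\tfrac{1}{12}R^2-|E|^2)$ to bound the invariant by $\tfrac{1}{96}R^2 V = \tfrac{1}{96}Y^2$, and then apply Escobar's sharp inequality $Y(M,\partial M,[g])\le Y(\mathbb{S}^4_+,\mathbb{S}^3,g_c)=8\sqrt{3}\pi$. The rigidity step you flag as ``the main obstacle'' is exactly the equality case of Escobar's theorem, which the paper simply cites; you need not go through an Einstein/Obata argument separately, since equality in the chain already forces $Y=Y(\mathbb{S}^4_+)$ and Escobar's result then gives the conformal equivalence directly.
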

 \begin{proof}
   Denote the volume of $(M, g)$ by $V_g.$
  Let $\tilde{g}$ be a Yamabe metric such that $R_{\tilde{g}}$ is constant and
  the boundary is totally geodesic.  It was proved by Escobar \cite{Es92} that
  \beq \label{i:yamb}
   Y(M,\de M, g) = \frac {\int_M R_{\tilde{g}}+ \oint_{\de M} 3 \mu_{\tilde{g}}}
   {V_{\tilde{g}}^{\frac{1}{2}}}=
   R_{\tilde{g}} V_{\tilde{g}}^{\frac{1}{2}} \leq Y(\mathbb{S}^4_+,\mathbb{S}^3, g_c) = 8 \sqrt 3 \pi.
   \eeq
  The equality holds if and only if $(M, g)$
   is conformally equivalent to $(\mathbb{S}^4_+, g_c).$
   Since $\mu_{\tilde{g}}= 0,$ by Lemma~\ref{l:b=0} we have $\mathcal{B}_{\tilde{g}}= 0.$
   Therefore,
   $
    \int_M \gs_2 (A_g) + \frac{1}{2}\oint_{\de M}  \mathcal{B}_g =
    \int_M \gs_2 (A_{\tilde{g}}) + \frac{1}{2}\oint_{\de M}  \mathcal{B}_{\tilde{g}}
    = \int_M \gs_2 (A_{\tilde{g}}).$
   Note that $\gs_2 (A) = \frac{1}{8} (\frac{1}{12} R^2- |E|^2),$
  where $E= Ric - \frac{1}{4} R g.$ 
   By (\ref{i:yamb}) we get
    $
    \int_M \gs_2 (A_g) + \frac{1}{2}\oint_{\de M}  \mathcal{B}_g =
     \frac{1}{8} \int_M (\frac{1}{12} R^2_{\tilde{g}}- |E_{\tilde{g}}|^2 )
    \leq \frac{1}{96} R^2_{\tilde{g}} V_{\tilde{g}} \leq 2 \pi^2.
    $
   The equality holds if and only if $(M, g)$
   is conformally equivalent to $(\mathbb{S}^4_+, g_c).$
 \end{proof}

 \bpf[Proof of Proposition~\ref{p:f}]
 Let $\hat A= \gra ^2 u + du \otimes du -\frac{1}{2}|\gra u|^2 g +
 A_g.$ Since $\mu_g= 0,$ the problem is equivalent to solve
 $$
 \left\{  \begin{array}{ll}
 \gs_2 ^{\frac{1}{2}} (\hess u + du \otimes du -\frac{1}{2}|\gra u|^2 g + A_g) =
 f(x)\, e^{-2u} & in \, M \\
 \frac{\de u}{\de n} = 0  & on \, \de M
 \end{array}\right .$$ with $\hat A \in \Gamma^+_2.$

   Denote the volume of $(M, g)$ by $V_g.$
 We will use a deformation motivated by
  \cite{GV04}, \cite{GV05} for closed manifolds.
 Let $S_g= (1-\zeta(t)) (\frac{1}{\sqrt 6} V^{\frac{2}{5}} g -A_g).$
  Consider the following path of equations for $0 \leq t \leq 1$ with $\hat A + S_g \in \Gamma^+_2$:
 \beq \label{e:defm}
  \left\{  \begin{array}{ll}
 \gs_2 ^{\frac{1}{2}} (\hess u + du \otimes du -\frac{1}{2}|\gra u|^2 g
  + A_g + S_g)
  =(1-t)(\int_M e^{- 5u})^{\frac{2}{5}} + \zeta(t) f \, e^{-2u} & in \, M \\
 \frac{\de u}{\de n} = 0  & on \, \de M,
 \end{array}\right .
 \eeq
 where  $\zeta(t) \in C^1 [0,1]$ satisfies $0 \leq \zeta \leq 1$,
 $\zeta(0)= 0,$ and $\zeta = 1$ for $t \geq \frac{1}{2}.$
  The Leray-Schauder
 degree is defined by considering the space $\{u \in C^{4, \ga} (M): \frac{\de
u}{\de n} = 0$ on $\de M \};$ see \cite{Chen05a}. 
We check that  at $t = 0$
the degree is nonzero.
 For closed manifolds, it was proved in \cite{GV04} that the
 degree is nonzero at $t=0.$ For manifolds with boundary,
 we remark that the same proof works. More specifically,
  at $t=0,$ (\ref{e:defm}) becomes
   $$
  \left\{  \begin{array}{ll}
 \gs_2 ^{\frac{1}{2}} (\hess u + du \otimes du -\frac{1}{2}|\gra u|^2 g
  + \frac{1}{\sqrt 6} V^{\frac{2}{5}} g)
  = (\int_M e^{- 5u})^{\frac{2}{5}}  & in \, M \\
 \frac{\de u}{\de n} = 0  & on \, \de M.
 \end{array}\right .
  $$
  By the boundary condition $\frac{\de u}{\de n}=0$
  if  the maximum (resp. minimum) of $u$ happens at the boundary, we still
  have $\gra u =0,$ and  $\hess u$ is negative (resp. positive)
  semi-definite. Hence, as in \cite{GV04} by the maximum principle,
  $u=0$ is the unique solution.

  Now the linearized operator $\mathcal{P}:
  C^{2, \ga}(M) \cap \{\frac{\de \phi}{\de n}=0$ on $\de M \} \rightarrow C^{\ga} (M)$
  at $u=0$ is
  $ \mathcal{P} (\phi) = \frac{\sqrt 6}{4}  \Delta \phi + 2 V_g^{-\frac{2}{5}}\int_M \phi.$
  Then the rest of the proof of
  showing the degree is nonzero at $t=0$ follows from \cite{GV04}.
  Consequently, the problem reduces to establishing a priori
 estimates for (\ref{e:defm}).

 Suppose we have uniform $C^0$ bounds for (\ref{e:defm}).
  By \cite{Chen05}, we get interior $C^2$ estimates.
 For boundary $C^2$ estimates, we check that $S$ satisfies the
 condition in Theorem~\ref{t:bdy1} (b). Since $\mu = 0,$ by
 Lemma~\ref{l:bdy} (a) we have $S_{\ga n}= 0$ and by (\ref{e:A_gagbn})
 $g^{\ga \gb} S_{\ga \gb, n}= - (1-\zeta(t)) g^{\ga \gb} A_{\ga \gb, n}= 0.$
 Hence, we have boundary $C^2$ estimates in
 each half ball $\overline {B}^+_r$ in Fermi coordinates.
 Thus,
  higher order regularities. 
 It remains to derive  a priori $C^0$ estimates.
 We begin by proving the boundedness of the integral term in
 (\ref{e:defm}).

 \begin{lemma} \label{l:int}
 Let $u$ be a solution to (\ref{e:defm}) with $t \in [0,1].$
 Then
  $ (1-t) (\int_M e^{- 5u})^{\frac{2}{5}} < C.$
 \end{lemma}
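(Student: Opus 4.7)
The plan is to use the Newton--MacLaurin inequality to bound $\sigma_2^{1/2}$ linearly in $\sigma_1$, integrate the resulting inequality over $M$, and exploit the Neumann condition $u_n = 0$ to kill the Laplacian term.

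First, since $\hat A + S_g \in \Gamma_2^+$, Lemma~\ref{l:sigma}(c) (with $n=4$, $k=2$, $l=1$) gives
$$\sigma_2^{1/2}(\hat A + S_g) \leq \tfrac{\sqrt 6}{4}\, \sigma_1(\hat A + S_g).$$
Second, I compute the trace in dimension four. From $\hat A = \hess u + du\otimes du - \tfrac{1}{2}|\gra u|^2 g + A_g$,
$$\sigma_1(\hat A + S_g) \;=\; \Delta u \;-\; |\gra u|^2 \;+\; \sigma_1(A_g) \;+\; \sigma_1(S_g).$$
The key sign observation is that the coefficient of $|\gra u|^2$ is \emph{negative}, and that $\sigma_1(A_g) + \sigma_1(S_g)$ depends only on $g$ and on the fixed quantity $V_g^{2/5}$, hence is uniformly bounded.

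Third, I integrate the equation (\ref{e:defm}) over $M$ against $dV_g$:
$$(1-t)\Bigl(\int_M e^{-5u}\Bigr)^{\!2/5} V_g + \zeta(t)\int_M f\, e^{-2u}\, dV_g \leq \tfrac{\sqrt 6}{4}\int_M \bigl[\Delta u - |\gra u|^2 + \sigma_1(A_g) + \sigma_1(S_g)\bigr]\, dV_g.$$
Because the boundary is totally geodesic for $g$ and $\tfrac{\de u}{\de n} = 0$ on $\de M$, the divergence theorem yields $\int_M \Delta u\, dV_g = 0$. Dropping the nonnegative terms $\int_M |\gra u|^2$ on the right and $\zeta(t)\int_M f e^{-2u}$ on the left, I obtain
$$(1-t)\Bigl(\int_M e^{-5u}\Bigr)^{\!2/5} V_g \;\leq\; \tfrac{\sqrt 6}{4}\int_M \bigl[\sigma_1(A_g) + \sigma_1(S_g)\bigr]\, dV_g \;\leq\; C,$$
where $C$ depends only on $(M,g)$. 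Dividing by $V_g$ gives the asserted bound.

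There is no real obstacle here: the lemma is designed to be an integration-by-parts computation, and the umbilic/totally geodesic hypothesis together with the Neumann condition makes the boundary term in Green's identity vanish for free. The only point requiring a moment's care is verifying that $\sigma_1(S_g)$ is genuinely independent of $u$ (it is: $S_g$ is a fixed tensor for each fixed $t$, depending on $V_g$ but not on the solution).
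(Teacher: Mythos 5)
Your argument is correct, but it takes a genuinely different route from the paper's. The paper's proof is a pointwise maximum-principle argument: evaluate at the maximum point $x_0$ of $u$; the Neumann condition $\frac{\de u}{\de n}=0$ forces $\gra u(x_0)=0$ and $\hess u(x_0)\le 0$ whether $x_0$ is interior or on the boundary, and then monotonicity of $\gs_2^{\frac{1}{2}}$ on $\Gamma_2^+$ gives directly
$(1-t)(\int_M e^{-5u})^{\frac{2}{5}} \le \gs_2^{\frac{1}{2}}(\hess u(x_0)+A_g(x_0)+S_g(x_0)) \le \gs_2^{\frac{1}{2}}(A_g(x_0)+S_g(x_0)) < C.$
You instead derive a global integral bound: apply the Newton--MacLaurin inequality pointwise to get $\gs_2^{\frac{1}{2}}\le \frac{\sqrt 6}{4}\gs_1$, integrate, drop the favorable $-|\gra u|^2$ term, and use the divergence theorem together with the Neumann condition to discard $\int_M \Delta u$. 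Both proofs ultimately lean on the same structure (Neumann condition plus the totally geodesic/umbilic boundary making the boundary term vanish), but the paper's proof is a local estimate at one point, while yours is an $L^1$ estimate over the whole manifold. The paper's version is slightly leaner --- it needs only monotonicity, not Newton--MacLaurin or integration by parts --- and it mirrors the maximum-principle $C^0$ upper bound already used in the proof of Proposition~\ref{p:pos}. Your version has the virtue of not needing to identify a maximum point and would adapt more readily to settings where the pointwise argument is unavailable. One small point worth stating explicitly in your write-up: the Newton--MacLaurin step requires $g^{-1}(\hat A+S_g)\in\Gamma_2^+$, which is indeed part of the continuity-method hypothesis for (\ref{e:defm}).
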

 \bpf
  Since
 $\frac{\de u}{\de n} =0$ on the boundary,  at the maximum point $x_0,$
 we  have $\gra u =0$ and  $\hess u$
 is negative semi-definite, no matter  
   $x_0$ being in the interior or at the boundary.
 Thus,
 $$
  (1-t) (\int_M e^{- 5u})^{\frac{2}{5}} \leq
 \gs_2 ^{\frac{1}{2}} (\hess u(x_0)  + A_g (x_0)+S_g(x_0))
  \leq \gs_2 ^{\frac{1}{2}} (A_g(x_0)+S_g(x_0)) < C.
  $$
 \epf
 Now we  prove that $\inf_M u > -C.$
 
 (1) $\inf_M u > -C$ for $t \in [0, 1-\varepsilon].$
 
   The analysis depends on whether the
    infimum point is close to the boundary or not.
    Suppose there is a sequence of solutions $\{u^i\}$ to (\ref{e:defm})
    with $t= t_i \leq 1- \varepsilon$ such
   that $u^i(p_i) = \inf u^i \rightarrow - \infty$ and $p_i \rightarrow p_0.$
    Let $\epsilon_i = e^{\inf u^i} \rightarrow +0$ and $d_i$ be the
   distance from $p_i$ to $\de M.$ We will show that there is a contradiction.\par

     Case a. Non-tangential approach.  Assume
    $
    \frac{d_i}{\epsilon_i} \rightarrow +\infty.
    $\\    
    Using the normal coordinates at $p_i$, we define the mapping
     $$\begin{array}{rl}
     \mathcal{T}_i : B(0, \frac{d_i}{\epsilon_i}) \subset \mathbb{R}^4 \rightarrow&
     M\\
       x \rightarrow& \exp_{p_i} (\epsilon_i x) = y.
     \end{array}$$ 
   On $\mathbb{R}^4$, define the metric $g_i = \epsilon_i^{-2} \mathcal{T}^*_i  g$
   and the function $\tilde{u}^i = u^i (\mathcal{T}_i(x)) - \ln \epsilon_i.$
   Then $\tilde{u}^i (0) = u^i(p_i) - \ln \epsilon_i = 0$ and
   $\tilde{u}^i(x) \geq 0$. Moreover, 
      $$\gs_2 ^{\frac{1}{2}} (\hess_{g_i} \tilde{u}^i +
   d\tilde{u}^i \otimes_{g_i} d\tilde{u}^i -
   \frac{1}{2}|\gra_{g_i} \tilde{u}^i|^2 g_i + A_{g_i}+ S_{g_i})
   = \epsilon^2_i (1-t_i) (\int_M e^{- 5u^i})^{\frac{2}{5}} + \zeta(t_i)
   f(\mathcal{T}_i(x))\, e^{-2\tilde{u}^i}$$ on $B(0,
   \frac{d_i}{\epsilon_i})$ in $\mathbb{R}^4.$ Note that $g_i$
   tends to the Euclidean metric $ds^2$
    as $i$ goes to infinity.
  By Lemma~\ref{l:int}, the integral term  is
  bounded.  Hence, by  local estimates \cite{Chen05} and the fact that
   $\tilde{u}^i  \geq 0$, we get
   $ \sup_{B(0,r)}  |\gra_{g_i} \tilde{u}^i| < C(r).$
   Integrating from zero, we have
   $
   \sup_{B(0,1)} \tilde{u}^i < C.$
  On the other hand, since $t \leq 1- \varepsilon$ for
  a fixed number $\varepsilon$, by Lemma~\ref{l:int}
  $$ \int_{B(0,1)} e^{- 5\tilde{u}^i} dV_{g_i}=
  \epsilon_i \int_{B(p_i,\epsilon_i)} e^{- 5u^i} dV_g
  < \epsilon_i C \rightarrow 0$$
  as $i \rightarrow \infty.$ This contradicts to $\sup_{B(0,1)} \tilde{u}^i < C$. 

  Case b. Tangential approach.
    Assume
     $\frac{d_i}{\epsilon_i} \leq C_0$
     for some fixed number $C_0$.\\
      Let $p'_i$ be a point on the boundary such
     that the distance between $p_i$ and $p'_i$ is $d_i.$
    We may assume the Fermi
    coordinates are defined in a tubular neighbor of length $\kappa.$
     Around the point $p'_i,$
    we define the mapping
     $$\begin{array}{rl}
     U_i : \overline{B}_+ (0, \frac{\kappa}{\epsilon_i}) \subset
     \overline {\mathbb{R}}_+^4 \rightarrow&  M\\
       x \rightarrow& \ G_{p'_i} (\epsilon_i x) = y,
     \end{array}$$ where $G$ is the normal exponential map;
      see  \cite{doCar}.
    We may assume that $\frac{d_i}{\epsilon_i} < \frac{\kappa}{\epsilon_i}.$
   On $\overline{\mathbb{R}}_+^4$, define the metric $g_i = \epsilon_i^{-2} U^*_i  g$
   and the function $\tilde{u}^i = u^i (U_i(x)) - \ln \epsilon_i.$
   Let $q_i \in \overline{B}_+ (0, \frac{\kappa}{\epsilon_i})$ be the point
   satisfying $U_i(q_i) = p_i.$ Therefore, $q_i \in \overline{B}_+ (0, \frac{d_i}{\epsilon_i})
   \subset \overline{B}_+ (0, C_0)$ belongs to a compact subset in
   $\overline{\mathbb{R}}_+^4.$ We have $\tilde{u}^i (q_i) = u^i(p_i) - \ln \epsilon_i = 0$ and
   $\tilde{u}^i(x) \geq 0$.  Moreover,
      $$\gs_2 ^{\frac{1}{2}} (\hess_{g_i} \tilde{u}^i +
   d\tilde{u}^i \otimes_{g_i} d\tilde{u}^i -
   \frac{1}{2}|\gra_{g_i} \tilde{u}^i|^2 g_i + A_{g_i}+ S_{g_i})
   = \epsilon^2_i (1-t_i) (\int_M e^{- 5u^i})^{\frac{2}{5}} + \zeta(t_i)
   f(U_i(x))\, e^{-2\tilde{u}^i}$$
  on $\overline{B}_+(0, \frac{\kappa}{\epsilon_i})$ in $\overline{\mathbb{R}}_+^4$
  with $\frac{\de \tilde{u}^i}{\de n} = 0$ on
  $\overline{B}_+(0, \frac{\kappa}{\epsilon_i}) \cap \{x_4 = 0\}.$  
   
  Using Theorem~\ref{t:bdy1} (a), Lemma~\ref{l:int}  and the fact that
   $\tilde{u}^i  \geq 0$, we get
   $ \sup_{\overline{B}_+(0,r)}  |\gra_{g_i} \tilde{u}^i| < C(r).$
    Integrating from $q_i$, we have
  $
   \sup_{\overline{B}_+(0,C_0)} \tilde{u}^i < C.$
  On the other hand, since $t \leq 1- \varepsilon,$ by Lemma~\ref{l:int}
  $$ \int_{\overline{B}_+(0,C_0)} e^{- 5\tilde{u}^i} dV_{g_i}=
  \epsilon_i \int_{U_i (\overline{B}_+(0,C_0))} e^{- 5u^i} dV_g
  < \epsilon_i C \rightarrow 0$$
  as $i \rightarrow \infty.$ This contradicts to $\sup_{\overline{B}_+(0,C_0)} \tilde{u}^i < C$. 

 (2) $\inf_M u > -C$ when $t \rightarrow 1.$

   Suppose on the contrary there is a sequence of solutions $\{u^i\}$ with $t_i \rightarrow 1$
   such that $u^i(p_i) = \inf u^i \rightarrow - \infty$ and $p_i
   \rightarrow p_0.$
    Let $\epsilon_i = e^{\inf u^i} \rightarrow +0$ and $d_i$ be the
   distance from $p_i$ to $\de M.$ For simplicity, we denote 
   $e^{-2u^i} g$ by $\hat g_i$ and $A_{\hat g_i}$ by
   $\hat A_i.$\par

  Case a. Non-tangential approach.
   Assume
   $
    \frac{d_i}{\epsilon_i} \rightarrow +\infty.
   $\\
   Let $\mathcal{T}_i$, $g_i,$ and $\tilde{u}^i$  be as in (1) Case a.
   Denote the metric $e^{-2\tilde{u}^i} g_i$ by $\tilde{g_i}.$
   Then $\tilde{u}^i (0) = u^i(p_i) - \ln \epsilon_i = 0$ and
   $\tilde{u}^i(x) \geq 0$. Moreover, since $t_i \rightarrow 1,$
   we have $\zeta(t_i) =1.$ Therefore, 
      $$\gs_2 ^{\frac{1}{2}} (\hess_{g_i} \tilde{u}^i +
   d\tilde{u}^i \otimes_{g_i} d\tilde{u}^i -
   \frac{1}{2}|\gra_{g_i} \tilde{u}^i|^2 g_i+ A_{g_i})
   = \epsilon^2_i (1-t_i) (\int_M e^{- 5u^i})^{\frac{2}{5}} +
   f(\mathcal{T}_i(x))\, e^{-2\tilde{u}^i}$$
   on $B(0, \frac{d_i}{\epsilon_i})$ in $\mathbb{R}^4.$
    Similar to  (1) Case a,   
   we get
      $ \sup_{B(0,r)} |\tilde{u}^i|+ |\gra_{g_i} \tilde{u}^i|^2 +
   |\hess_{g_i} \tilde{u}^i|< C(r).$

   Now since $f(\mathcal{T}_i (x)) \rightarrow f(p_0),$ the equation
   is uniform elliptic and concave.    Notice that $B(0, \frac{d_i}{\epsilon_i})\rightarrow \mathbb{R}^4$.
   Therefore,  $\{\tilde{u}^i\}$ converges uniformly on compact sets to a solution
   $u \in C^{\infty}(\mathbb{R}^4)$ of
   $\gs_2 ^{\frac{1}{2}} (\hess u + du \otimes u -
  \frac{1}{2}|\gra u|^2 ds^2 ) = f(p_0)\,
  e^{-2u}.$ 
   By the uniqueness theorem  \cite{CGY02a},  $e^{-2u} ds^2$
   comes from the pulling-back of the standard metric $g_c$ on the sphere. Hence,
   $$ 4\pi^2 \leftarrow \int_{B{(0,\frac{d_i}{\epsilon_i})}} \gs_2
    (A_{\tilde{g}_i})dV_{\tilde{g}_i} = \int_{B( p_i, d_i)}
    \gs_2(\hat A_i) dV_{\hat g_i} \leq \int_M \gs_2 (\hat A_i) dV_{\hat g_i}. $$
   On the other hand, since $\mu_{\hat g_i}= 0,$ by
   Lemma~\ref{l:b=0} we have $ \mathcal{B}_{\hat g_i}= 0.$ Thus, by
   Lemma~\ref{l:conf}
   $ \int_M \gs_2 (\hat A_i) dV_{\hat g_i}= \int_M \gs_2 (\hat A_i) dV_{\hat g_i}
   + \frac{1}{2} \oint_{\de M}  \mathcal{B}_{\hat g_i}\, d\Sigma_{\hat g_i} \leq 2 \pi^2.$
   This gives  a contradiction. \par


   Case b. Tangential approach.
     Assume
     $\frac{d_i}{\epsilon_i} \leq C_0$
     for some fixed number $C_0$. \\
     Let $p'_i$  and  $\kappa$ be as in (1) Case b.
   We may assume that
   $\frac{d_i}{\epsilon_i} < \frac{\kappa}{\epsilon_i}.$
   Let $U_i$, $g_i, q_i$ and $\tilde{u}^i$ be also as in (1) Case b.
   Denote the metric $e^{-2\tilde{u}^i} g_i$ by $\tilde{g}_i.$
     $q_i \in \overline{B}_+ (0, \frac{d_i}{\epsilon_i})
   \subset \overline{B}_+ (0, C_0)$ belongs to a compact subset in
   $\overline{\mathbb{R}}_+^4.$ We have $\tilde{u}^i (q_i) = u^i(p_i) - \ln \epsilon_i = 0$ and
   $\tilde{u}^i(x) \geq 0$. Moreover, 
      $$\gs_2 ^{\frac{1}{2}} (\hess_{g_i} \tilde{u}^i +
   d\tilde{u}^i \otimes_{g_i} d\tilde{u}^i -
   \frac{1}{2}|\gra_{g_i} \tilde{u}^i|^2 g_i+ A_{g_i})
   = \epsilon^2_i (1-t_i)(\int_M e^{- 5u^i})^{\frac{2}{5}} +
   f(U_i(x))\, e^{-2\tilde{u}^i}$$
  on $\overline{B}_+(0, \frac{\kappa}{\epsilon_i})$ in $\overline{\mathbb{R}}_+^4$
  with $\frac{\de \tilde{u}^i}{\de n} = 0$ on
  $\overline{B}_+(0, \frac{\kappa}{\epsilon_i}) \cap \{x_4 = 0\}.$  
    Similar to  (1) Case b,
  by Theorem~\ref{t:bdy1} (a), we get
      $ \sup_{\overline{B}_+(0,r)}  |\tilde{u}^i|+ |\gra_{g_i} \tilde{u}^i| < C(r).$
   Then by Theorem~\ref{t:bdy1} (b), we arrive at
   $ \sup_{\overline{B}_+(0,r)}  |\tilde{u}^i|+ |\gra_{g_i} \tilde{u}^i|+
   |\hess_{g_i} \tilde{u}^i| < C(r).$

   Now 
    $\{\tilde{u}^i\}$ converges uniformly on compact sets to a solution
   $u \in C^{\infty}(\overline{\mathbb{R}}_+^4)$ of
   $\gs_2 ^{\frac{1}{2}} (\hess u + du \otimes u -
  \frac{1}{2}|\gra u|^2 ds^2 ) = f(p_0)\,
  e^{-2u}$
  with $\frac{\de u}{\de n} = 0$ on $\{x_4 = 0\}.$  By reflection, $u$  extends to a $C^{2,\ga}$
   solution of the above equation in $\mathbb{R}^4$.
   Further regularities give  $u \in C^{\infty} (\mathbb{R}^4).$
   By the uniqueness theorem,  $e^{-2u} ds^2$ 
    comes from the pulling-back of  $g_c.$ 
   Hence,
   $$ 2\pi^2 \leftarrow \int_{\overline{B}_+(0,\frac {\kappa}{\epsilon_i})} \gs_2
    (A_{\tilde{g}_i})dV_{\tilde{g}_i} = \int_{U_i (\overline{B}_+(0, \frac{\kappa}{\epsilon_i}))}
    \gs_2(\hat A_i) dV_{\hat g_i} \leq \int_M \gs_2 (\hat A_i) dV_{\hat g_i}. $$
   On the other hand, since $\mu_{\hat g_i} = 0,$ by
   Lemma~\ref{l:b=0} we have $\mathcal{B}_{\hat g_i} = 0.$ Thus, by
   Lemma~\ref{l:conf} and the assumption that $( M, g)$ is not
   conformally equivalent to the hemisphere, we finally arrive at
   $ \int_M \gs_2 (\hat A_i) dV_{\hat g_i}= \int_M \gs_2 (\hat A_i) dV_{\hat g_i}
   + \frac{1}{2} \oint_{\de M}  \mathcal{B}_{\hat g_i}\, d\Sigma_{\hat g_i} < 2 \pi^2.$
   This gives a contradiction. \par


  (3) $C^0$ estimates.

  Once $u$ has a lower bound, by \cite{Chen05} and
  Theorem~\ref{t:bdy1} (a) we have $|\gra u|< C$.
  Thus, we obtain  $\sup_M u \leq \inf_M u + C.$
  It remains to prove that $\inf_M u$ is upper bounded.
  Since
 $\frac{\de u}{\de n} =0$ on the boundary,  at the minimun point $x_0,$
 we  have $\gra u =0$ and  $\hess u$
 is positive semi-definite, no matter  
   $x_0$ being in the interior or at the boundary.
 Therefore,
  $$
  C e^{- 2 \inf u}  \geq
  (1-t) (\int_M e^{- 5u})^{\frac{2}{5}} + \zeta(t) f(x_0) 
  e^{-2u}
  = \gs_2 ^{\frac{1}{2}} (\hess u(x_0)+ A_g (x_0))
  \geq \gs_2 ^{\frac{1}{2}} (A_g(x_0)) > 0.
  $$
  \end{proof}

\subsection{Application to Einstein manifolds}\label{subs:ccem}

 In this subsection, we give an application of Theorem~\ref{t:main} to conformally compact Einstein manifolds. 
 \begin{definition}
 Let $X^4$ be a compact manifold with boundary $\partial X = N^3$ and
 $g$ be a complete Einstein metric defined in the interior of $X$.
 $(X,g)$ is called a conformally compact Einstein manifold
 if there exists a smooth defining function $s$ for $N$ such that $(X, s^2g)$
 is a compact Riemannian manifold with boundary.
\end{definition}
 Each defining function induces a metric $s^2 g |_{N} = g_0$ on $N$.
 Thus $(X,g)$ determines a conformal structure $(N^3, [g_0])$ called
 the \emph{conformal infinity}. 
  The \emph{renormalized volume} $\mathcal{V}$
is a invariant  of $(X, g)$ coming from the volume expansion
  $Vol(\{s> \epsilon \}) = c_0 \epsilon^{-3}+ c_2 \epsilon^{-1} +
  \mathcal{V} + o(1).$

 \begin{corollary} \label{c:ccem}
 Let $(X^4,g)$ be a conformally compact Einstein manifold with conformal
 infinity $(N^3, [g_0])$. Suppose that $Y(N^3, [g_0])$
 and the renormalized volume $\mathcal{V}$ are both positive.
  Then there exists a conformal compactification $(X, \rho^2 g)$
  such that  $\gs_2(A_{\rho^2 g})$  is a positive constant and
  the boundary is totally geodesic.
  Moreover, $\rho$ is a defining function for $N.$
 \end{corollary}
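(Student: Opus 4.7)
The plan is to reduce Corollary~\ref{c:ccem} to an application of Theorem~\ref{t:main}. I will fix a smooth defining function $s$ for $N$ and work with the compactification $\bar g = s^2 g$; the desired $\rho^2 g$ will then be produced by a further conformal factor $e^{-u}$ supplied by Theorem~\ref{t:main}.

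First I would record two structural facts about $(X,\bar g)$. (i) The boundary $N$ is umbilic with respect to $\bar g$: this is a standard consequence of the Einstein equation combined with the asymptotic expansion of $g$ in terms of any geodesic defining function (the second fundamental form of $\{s=\varepsilon\}$ is pure trace to leading order). Being umbilic is conformally invariant, so it holds for every compactification in $[\bar g]$. (ii) The Gauss--Bonnet identity (\ref{e:cgb}) combined with Anderson's renormalized-volume formula
\[
 24\,\mathcal{V} \;=\; 32\pi^2 \chi(X) - \int_X |\mathcal{W}|^2 \, dV_g
\]
(valid for any conformally compact Einstein 4-manifold) gives
\[
 \int_X \sigma_2(A_{\bar g}) + \tfrac{1}{2}\oint_{\partial X}\mathcal{B}_{\bar g} \;=\; \tfrac{3}{2}\,\mathcal{V},
\]
where I have used that $\int|\mathcal{W}|^2$ is conformally invariant in dimension 4 and that the left-hand side is itself a conformal invariant. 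By hypothesis $\mathcal{V}>0$, so the second conformal invariant required by Theorem~\ref{t:main} is positive.

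Next I need $Y(X,\partial X,[\bar g])>0$. This follows from $Y(N^3,[g_0])>0$ by a known comparison between the Yamabe constant of the boundary conformal infinity and the Yamabe-type constant of the compactification (for instance, one constructs a test metric in $[\bar g]$ with positive total scalar-plus-mean-curvature by scaling a Yamabe minimizer of $(N,[g_0])$ near the boundary and using that $g$ has scalar curvature $-12$ to control the interior contribution). With both conformal invariants positive and the boundary umbilic, Theorem~\ref{t:main} supplies a metric $\hat g = e^{-2u}\bar g$ with $\sigma_2(A_{\hat g})$ a positive constant and $\mathcal{B}_{\hat g}=0$; the construction in Section~\ref{s:4mfd} moreover produces $\hat g$ with $A_{\hat g}\in\Gamma_2^+$.

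To finish, I invoke Lemma~\ref{l:b=0}: since the boundary is umbilic, $\mathcal{B}_{\hat g}=0$, and $A_{\hat g}\in\Gamma_2^+$, I conclude $h_{\hat g}=0$; together with umbilicity this gives $L_{\hat g}\equiv 0$, i.e.\ $\partial X$ is totally geodesic for $\hat g$. Setting $\rho := s\,e^{-u}$, I have $\hat g = \rho^2 g$; because $u\in C^\infty(X)$ is bounded and $s$ is a smooth defining function vanishing to first order on $N$, so is $\rho$, completing the claim. The main obstacle, beyond carefully quoting Anderson's identity and the Yamabe comparison, is verifying the umbilicity of $N$ under a general compactification; this is handled once by choosing a geodesic defining function and using the conformal invariance of umbilicity to transfer it to $\bar g$.
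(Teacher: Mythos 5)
Your overall reduction to Theorem~\ref{t:main} is the right strategy, and your use of the Gauss--Bonnet identity together with Anderson's renormalized-volume formula to identify $\int_X\sigma_2(A)+\tfrac12\oint_{\partial X}\mathcal{B}$ with a positive multiple of $\mathcal{V}$ matches the paper's argument (modulo a normalization slip: with the paper's convention $32\pi^2\chi(X)=\int_X|\mathcal{W}|^2+4\mathcal{V}$, the conformal invariant equals $\tfrac14\mathcal{V}$, not $\tfrac32\mathcal{V}$; the sign conclusion is unchanged). Your observation that $\mathcal{B}_{\hat g}=0$ together with $A_{\hat g}\in\Gamma^+_2$ and umbilicity forces the boundary to be totally geodesic via Lemma~\ref{l:b=0} is also correct.

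The genuine gap is the claim that $Y(X,\partial X,[\bar g])>0$ ``follows from $Y(N^3,[g_0])>0$ by a known comparison,'' with a sketch that scales a Yamabe minimizer of $(N,[g_0])$ near the boundary ``and uses that $g$ has scalar curvature $-12$ to control the interior contribution.'' As stated this does not work: the Einstein metric $g$ has constant negative scalar curvature, so a naive test metric built from $g$ has negative interior energy, and there is no elementary reason the boundary gain should overcome it. Establishing the positivity of $Y(X,\partial X,[\bar g])$ from $Y(N,[g_0])>0$ is a substantial theorem, not a routine test-function estimate. The paper invokes precisely this: Qing's rigidity result (\cite{Q03}, \cite{CQY04}) produces a conformal compactification $g_1=e^{-2u}g$ with $R_{g_1}>0$ and totally geodesic boundary, from which $Y(X,\partial X,[g_1])>0$ is immediate, and furthermore $e^{-u}$ is automatically a defining function, giving the last claim of the corollary. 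Your argument needs to cite that result (or an equivalent) rather than sketch a comparison that is not correct as written. Once Qing's theorem is in place, your handling of the umbilicity (which the paper obtains as a byproduct of Qing's compactification having totally geodesic boundary, rather than via the asymptotic expansion as you do, though both are valid), the conformal invariant computation, and the passage to $\rho=s\,e^{-u}$ all go through.
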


 \begin{proof}
  First,  Qing \cite{Q03}, \cite{CQY04} proved
  that if $Y(N^3, [g_0]) > 0,$ then there exists a conformal compactification
  $(X, e^{-2u}g)$ such that $R_{e^{-2u}g}$ is positive and the boundary
  is totally geodesic. Denote this metric by $g_1 = e^{-2u}g.$
  Hence, we have
  \beq \label{i:yamabe}
  Y(X, N, [g_1])> 0.
  \eeq
   Secondly, for conformally compact Einstein
  four-manifolds, Andersen \cite{And01} proved that 
  $
  32 \pi^2 \chi (X) =  \int_X |\mathcal{W}|^2 dV_g + 4
  \mathcal{V}.
  $ Now recall the Gauss-Bonnet formula for compact four-manifolds
  with boundary:
   $ 32 \pi^2 \chi (X, \de X) = \int_X |\mathcal{W}|^2 + 16 (\int_X \gs_2 (A_{g_1}) + \frac{1}{2}
   \oint_{\de X} \mathcal{B}_{g_1}).$
  Since the boundary is totally
  geodesic $h_{g_1} = 0,$  by Lemma~\ref{l:b=0} we have
  $\mathcal{B}_{g_1} = 0.$ 
  This gives
  \beq \label{i:renom}
  4 \int_X \gs_2 (A_{g_1})  = \mathcal{V} > 0.
  \eeq

  (\ref{i:yamabe}) and (\ref{i:renom}) then verify the conditions of Theorem~\ref{t:main}.
  Therefore, by Theorem~\ref{t:main} there is a conformal metric $g_2 = e^{-2v} g_1$ such
  that $\gs_2 (A_{g_2})$ is a positive constant and the boundary
  is totally geodesic. Thus, $(X, g_2= \rho^2 g)$ with $\rho = e^{-(u+ v)}$ is a conformal
  compactification satisfying the properties required in the
  corollary. Moreover, since $e^{-u}$ is a defining function (see
  \cite{Q03}), it follows that $e^{-(u+v)}$ is also a defining
  function.
 \end{proof}

\section{Functionals $\mathcal{F}_k$}  \label{s:variation}

 In this section, we prove Theorem~\ref{t:variation} and
 Corollary~\ref{c:variation}. We first prove a lemma.
 \begin{lemma}\label{l:divergence} Let $A_g$ and $L$ be the Schouten tensor and the
 second fundamental form, respectively.
  When the Cotten tensor is zero (i.e., $A_{ij,k}= A_{ik,j}$) or when $q=1$,
  we have\par
 (a) $T_q(A)^i_{j, i} =0.$ (i.e., $T_q$ is divergence free.) \\
  Moreover, if $(M, g)$ is locally conformally flat, we also
  have\par
 (b) $T_{q, r} (A^T, L)^{\ga}_{\gb, \tilde{\ga}}= \frac{(n-1-q)(q-r)}{q}
 T_{q-1,r} (A^T, L)^{\ga}_{\gb}A^n_{\ga}- r T_{q, r-1}(A^T, L)^{\ga}_{\gb}
 A^n_{\ga};$\par
 (c) $T_q(A^T)^{\ga}_{\gb, \tilde{\ga}}= -q T_{q, q-1} (A^T,
 L)^{\ga}_{\gb} A^n_{\ga}.$
 \end{lemma}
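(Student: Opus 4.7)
The plan is to derive all three identities from the generalized Kronecker symbol representations in Lemma~\ref{l:Tij} and Definition~\ref{d:mix-sym}, exploiting antisymmetry of the Kronecker symbol in its upper indices together with the Bianchi and Codazzi identities.

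For (a), I would expand $T_q(A)^i_{j,i}$ using Lemma~\ref{l:Tij}(b), distribute the covariant derivative over $A^{j_1}_{i_1}\cdots A^{j_q}_{i_q}$ by Leibniz, and use the full symmetry of the $q$ factors under the Kronecker symbol to reduce everything to $q$ times a single representative term containing $\gra_i A^{j_1}_{i_1}$. Relabeling the dummy indices $i_1 \leftrightarrow i$ flips the sign of the Kronecker symbol, while the vanishing Cotton assumption $A_{ij,k} = A_{ik,j}$ (combined with symmetry of $A$) converts the derivative back to its original form. The two expressions therefore differ by a sign and cancel. For $q = 1$, the identity $T_1(A)^i_{j,i} = \gra_j \gs_1(A) - \gra_i A^i_j$ vanishes by the twice-contracted second Bianchi identity applied to the Schouten tensor, without needing vanishing Cotton.

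For (b), I would work in Fermi coordinates and carefully separate the intrinsic boundary covariant derivative $\tilde\gra$ from the restriction of the ambient derivative. By (\ref{e:christ}) and (\ref{e:christ-i}) these differ by Christoffel correction terms built from $L$; explicitly, $A^T_{\ga\gb,\tilde{\gc}} = A_{\ga\gb,\gc} + L_{\gc\ga}A^n_{\gb} + L_{\gc\gb}A^n_{\ga}$, and similarly for $L$. Expanding $T_{q,r}(A^T, L)^{\ga}_{\gb,\tilde{\ga}}$ by Leibniz produces $r$ terms with the derivative on an $A^T$ factor and $q - r$ terms with the derivative on an $L$ factor. Local conformal flatness forces both the Weyl and Cotton tensors to vanish, so $\gra A$ is totally symmetric and Codazzi reduces via Lemma~\ref{l:bdy} to $L_{\ga\gc,\gb} - L_{\gb\gc,\ga} = -A_{\ga n}g_{\gb\gc} + A_{\gb n}g_{\ga\gc}$. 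Antisymmetry of the Kronecker symbol in its upper indices annihilates the symmetric parts of $\gra A$ and of $\gra L$, so only the Codazzi residues and the intrinsic-versus-ambient $L\cdot A^n$ corrections survive. These two sources can be repackaged into multiples of $T_{q-1,r}(A^T,L)^{\ga}_{\gb}A^n_{\ga}$ and $T_{q,r-1}(A^T,L)^{\ga}_{\gb}A^n_{\ga}$ respectively.

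The main obstacle will be combinatorial: tracking the coefficients $(n-1-q)(q-r)/q$ and $r$ through the expansion. The factor $n - 1 - q$ arises from a tangential trace identity analogous to Lemma~\ref{l:variation}(a), applied on the $(n-1)$-dimensional boundary with $q$ indices already fixed by the Kronecker symbol; the factor $q - r$ counts the $L$ factors producing Codazzi residues, and $r$ counts the $A^T$ factors producing $L$-corrections from $\tilde\gra$ versus $\gra$. Part (c) then follows by specializing (b) to $r = q$: the $(q-r)$-factor kills the first term on the right, and since no $L$ factors appear in $T_{q,q}(A^T,L)$ this coincides with $T_q(A^T)$, which is the stated identity.
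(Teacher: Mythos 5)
Your proposal is correct and lands on the same core mechanisms the paper uses, but the two arguments are organized differently in part (a) and the comparison is worth noting.

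For (a), the paper does \emph{not} use a direct Kronecker-symbol argument; it cites \cite{Via00} for the locally conformally flat case and then gives an inductive proof based on the recursion $T_m(A) = \gs_m(A)\,I - T_{m-1}(A)\,A$: taking the divergence, the inductive hypothesis kills $T_{m-1}(A)^i_{k,i}A^k_j$, and the Cotton-flat condition $A^k_{j,i}=A^k_{i,j}$ turns the remaining term $T_{m-1}(A)^i_k A^k_{j,i}$ into $\gs_m(A)_{,j}$, which cancels the first term exactly. Your direct argument — expand $T_q(A)^i_{j,i}$ via Lemma~\ref{l:Tij}(b), Leibniz, reduce to $q$ copies of a representative with $\gra_i A^{j_1}_{i_1}$ against a Kronecker symbol antisymmetric in $(i_1,i)$, and observe Cotton-flatness makes that derivative symmetric in $(i_1,i)$ — is a genuine alternative and somewhat cleaner since it avoids the induction; interestingly, this antisymmetry-versus-Cotton-flatness mechanism is precisely what the paper deploys in the first step of its proof of (b). Your observation that the $q=1$ case needs only the contracted second Bianchi identity (not Cotton vanishing) is also correct and matches the paper's remark citing \cite{CGY02}.

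For (b) and (c), your plan is essentially the paper's computation, just reorganized. The paper first computes $T_{q,r}(A^T,L)^\ga_{\gb,\ga}$ (ambient contracted divergence): the $r$ terms with $\gra A$ die by Cotton-flat plus Kronecker antisymmetry, and the $q-r$ terms with $\gra L$ produce the Codazzi residue $R_{\ga\gb\gc n}=A_{\gb n}g_{\ga\gc}-A_{\ga n}g_{\gb\gc}$ (Weyl vanishing), yielding the $\frac{(q-r)(n-1-q)}{q}T_{q-1,r}A^n$ term. It then converts $\gra_\ga$ to $\tilde\gra_\ga$ via $\gra_\gc A^\gb_\ga=\tilde\gra_\gc A^\gb_\ga-L_{\ga\gc}A^\gb_n-L^\gb_\gc A^n_\ga$, which — after the $L$-symmetric piece is killed by Kronecker antisymmetry — supplies $-rT_{q,r-1}A^n$. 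You propose to Leibniz-expand directly in $\tilde\gra$ and sort contributions by source afterward; same content, different bookkeeping. One small misattribution: you cite Lemma~\ref{l:bdy} for the Codazzi reduction, but that lemma assumes an umbilic boundary, which is not hypothesized here; the identity you actually invoke is just the Codazzi equation combined with the curvature decomposition $R_{ijkl}=A_{ik}g_{jl}+A_{jl}g_{ik}-A_{il}g_{jk}-A_{jk}g_{il}$ valid when $\mathcal W=0$, and that is what the paper uses. Your derivation of (c) by setting $r=q$, noting $T_{q,q}(A^T,L)=T_q(A^T)$ and the $(q-r)$-factor killing the first term, matches the paper exactly.
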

 \bpf When $(M, g)$ is locally conformally flat, (a) was proved in \cite{Via00}; see also \cite{CGY02} for
 $q=1$ case. 
  Suppose (a) is true for $q <m.$ By the recursive formula and $A_{ij,k}= A_{ik,j}$,
  $$T_m (A)^i_{j, i} = \gs_m (A)_{, i} g^i_j - T_{m-1}(A)^i_{k, i} A^k_j - T_{m-1}(A)^i_k A^k_{j,i}
   = \gs_m (A)_{, j}- T_{m-1}(A)^i_k A^k_{j,i}= 0.$$

 For (b), we first compute
 \begin{eqnarray*}
  T_{q, r} (A^T, L)^{\ga}_{\gb, \ga}&=& \frac{1}{q!}
  \sum_{i_1, \cdots, j_1 \cdots <n}
   \left(  \begin{array}{l}
    i_1 \cdots i_r \cdots i_q \, \ga\\
    j_1 \cdots j_r \cdots j_q \, \gb
    \end{array} \right)\times\\
   & &   \left[ r A^{j_1}_{i_1}\cdots A^{j_r}_{i_r, \ga}
    L^{j_{r+1}}_{i_{r+1}} \cdots L^{j_q}_{i_q}
    + (q-r) A^{j_1}_{i_1}\cdots A^{j_r}_{i_r}
    L^{j_{r+1}}_{i_{r+1}} \cdots L^{j_q}_{i_q, \ga}\right]\\
    &=& \frac{1}{q!}
  \sum_{i_1, \cdots, j_1 \cdots <n} (q-r)
   \left(  \begin{array}{l}
    i_1 \cdots i_r \cdots i_q \, \ga\\
    j_1 \cdots j_r \cdots j_q \, \gb
    \end{array} \right)
     A^{j_1}_{i_1}\cdots A^{j_r}_{i_r}
    L^{j_{r+1}}_{i_{r+1}} \cdots L^{j_q}_{i_q, \ga},
  \end{eqnarray*}
  where in the first equality, the first term is zero because
  $A^{j_r}_{i_r, \ga}$ is symmetric in $(i_r \ga).$
  By the Codazzi equation
  and the curvature decomposition, we have
  $L_{\ga \gc, \gb} -L_{\gb \gc, \ga} = R_{\ga \gb \gc n} = A_{\gb n} g_{\ga \gc}
   - A_{\ga n} g_{\gb \gc}.$ Therefore, 
  $$\begin{array} {l}
   T_{q, r} (A^T, L)^{\ga}_{\gb, \ga}
    = \frac{q-r}{q!}  \sum_{i_1, \cdots, j_1 \cdots <n}
   \left(  \begin{array}{l}
    i_1 \cdots i_q \, \ga\\
    j_1 \cdots j_q \, \gb
    \end{array} \right)
     A^{j_1}_{i_1}\cdots A^{j_r}_{i_r}
    L^{j_{r+1}}_{i_{r+1}} \cdots L^{j_{q-1}}_{i_{q-1}} g^{j_q}_{i_q} A_{\ga
    n}.
       \end{array} $$
 Hence,
  \beq   T_{q, r} (A^T, L)^{\ga}_{\gb, \ga}
   = \frac{q-r}{q!} (n-q-1)\, T_{q-1, r}(A^T, L)^{\ga}_{\gb} A_{\ga
   n}.\label{e:S3l1}
 \eeq

 By definition,
   $\gra_{\gc} A^{\gb}_{\ga} = \gra_{\tilde{\gc}} A^{\gb}_{\ga}- L_{\ga \gc} A^{\gb}_n
   - L^{\gb}_{\gc} A^n_{\ga}.$ Thus, we obtain
  \begin{align}
   &  T_{q, r} (A^T, L)^{\ga}_{\gb, \tilde{\ga}}
    = T_{q, r} (A^T, L)^{\ga}_{\gb, \ga} \notag\\
   & + \frac{1}{q!}  \sum_{i_1, \cdots, j_1 \cdots <n}
   \left(  \begin{array}{l}
    i_1 \cdots i_r \cdots i_q \, \ga\\
    j_1 \cdots j_r \cdots j_q \, \gb
    \end{array} \right)
    r A^{j_1}_{i_1}\cdots A^{j_{r-1}}_{i_{r-1}} (L_{i_r \ga} A^{j_r}_n + L^{j_r}_{\ga} A^n_{i_r})
    L^{j_{r+1}}_{i_{r+1}} \cdots L^{j_q}_{i_q}\notag\\
   &  = T_{q, r} (A^T, L)^{\ga}_{\gb, \ga} +
    \frac{r}{q!}  \sum_{i_1, \cdots, j_1 \cdots <n}
   \left(  \begin{array}{l}
    i_1 \cdots i_q \, \ga\\
    j_1 \cdots j_q \, \gb
    \end{array} \right)
    A^{j_1}_{i_1}\cdots A^{j_{r-1}}_{i_{r-1}}L^{j_r}_{\ga} A^n_{i_r}
    L^{j_{r+1}}_{i_{r+1}} \cdots L^{j_q}_{i_q},\notag
   \end{align}
   where in the first equality, the first term is zero because
  $L$ is symmetric.
   Exchanging $i_r$ and $\ga,$ we arrive
   at
   \begin{align}
     &  T_{q, r} (A^T, L)^{\ga}_{\gb, \tilde{\ga}}
        = T_{q,r} (A^T, L)^{\ga}_{\gb, \ga} - rT_{q, r-1} (A^T,
   L)^{i_r}_{\gb} A^n_{i_r}. \label{e:S3l2}
  \end{align}
  Combining (\ref{e:S3l1}) and (\ref{e:S3l2}) gives (b).

 (c) follows from (b) by letting $r= q.$
 \epf

 In the
 following proof, for simplicity $\int$ stands for $\int_M$ and
 $\oint$ stands for $\oint_{\de M}.$
 \bpf[Proof of Theorem~\ref{t:variation}]
  Let $g_t= e^{-2u_t} g$ be a conformal variation of $g$ such that $u_0= 0.$ Suppose
  $u'_t= \phi$ at $t=0.$ Then $g'= -2 \phi g$ and $(g^{-1})'= 2\phi
  g^{-1}.$  Consequently,  $dV' = -n \phi dV$ and $d\Sigma' = - (n-1) \phi d\Sigma.$

  By conformal change formulas of $A_g$ and $L,$ we get directly
  that $A'_{ij} = \phi_{ij}$ and $L'_{\ga \gb}= -L_{\ga \gb} \phi + \phi_n g_{\ga \gb}.$
  Therefore, by raising indices we obtain
  \beq\label{e:S3A}
  {A'}^j_i = A'_{im} g^{mj}+ A_{im} g'^{mj} = \phi^j_i + 2\phi
  A^j_i
  \eeq and
  \beq \label{e:S3L}
  {L'}^{\gb}_{\ga}= L'_{\ga \gc} g^{\gc \gb}+ L_{\ga \gc} g'^{\gc \gb}=
  L^{\gb}_{\ga} \phi + \phi_n g^{\gb}_{\ga}.
  \eeq Then by Lemma~\ref{l:variation},  we have
  \begin{align}
  \gs'_{q+1, r+1} (A^T, L)&= (r+q+2) \gs_{q+1, r+1} (A^T, L)\phi +
  \frac{r+1}{q+1}T_{q, r} (A^T, L)^{\ga}_{\gb} \phi^{\gb}_{\ga}\notag\\
  &  + \frac{(q-r)}{q+1}(n-1-q) \gs_{q, r+1}(A^T, L) \phi_n, \label{e:var-AtL}\\
   \gs'_{q+1} (A)& = 2(q+1) \gs_{q+1} (A)\phi + T_q (A)^i_j
  \phi^j_i,\label{e:var-A}\\
  \gs'_{q+1} (L) &= (q+1) \gs_{q+1} (L) \phi + (n-1-q)\gs_q(L)
  \phi_n.\label{e:var-L}
  \end{align}

  By Lemma~\ref{l:divergence} (a), $T_q(A)$ is divergence free.
  Applying the integration
  by parts gives
  \beq \label{e:S3}
    \left(\int \gs_k (A)\right)' = \int \gs_k'(A) dV + \gs_k (A)
    dV'= (2k-n)\int \gs_k(A) \phi - \oint T_{k-1} (A)^n_j \phi^j,
  \eeq where $n$ is the unit inner normal.

 (a)
 By (\ref{e:var-AtL}) and Lemma~\ref{l:Tij} (c),
  \begin{align}
  & \left( \oint \gs_{2,1} (A^T, L) \right)'=
  \oint  \{ (4-n) \phi \gs_{2, 1} (A^T, L) +
  \frac{1}{2} T_1 ( L)^{\ga}_{\gb}
  \phi^{\gb}_{\ga} + \frac{n-2}{2} \gs_1 (A^T) \phi_n\} \notag \\
  &= \oint  \{ (4-n) \phi \gs_{2, 1} (A^T, L) +
  \frac{1}{2} T_1 ( L)^{\ga}_{\gb}
  \phi^{\gb}_{\ga} + \frac{n-2}{2} T_1 (A)^n_n \phi_n
  \}.\label{e:S3a1}
  \end{align}
  For the second term in the last integral, applying integration by
  parts we get
  \begin{align}
  & \oint T_1 ( L)^{\ga}_{\gb}  \phi^{\gb}_{\ga}
   = \oint T_1 (L)^{\ga}_{\gb} (\phi^{\tilde{\gb}}_{\tilde{\ga}}- L^{\gb}_{\ga} \phi_n)
   = \oint \{- T_1 (L)^{\ga}_{\gb, \ga} \phi^{\gb}
    - 2\gs_2 (L) \phi_n\},\label{e:S3a2}
   \end{align}
  where in the last equality we use Lemma~\ref{l:variation}(a), and the
  fact that $L_{\ga \gb, \gc} = L_{\ga \gb, \tilde{\gc}}$ since
  the boundary is of codimension one. On the other hand, by the
  Codazzi equation, we have  $R_{\gb n} = -L^{\gc}_{\gb, \gc} + h_{,\gb}.$
  Therefore, we get $T_1 (A)_{\gb}^n= - A_{\gb}^n = -\frac{1}{n-2} R_{\gb}^n
  = \frac{1}{n-2} (L^{\gc}_{\gb, \gc} - h_{,\gb}).$ As a result,
  we have the relation $T_1(L)^{\ga}_{\gb, \ga} =
  h_{,\ga} g^{\ga}_{\gb} - L^{\ga}_{\gb, \ga} =  h_{,\gb} - L^{\ga}_{\gb, \ga} =
  - (n-2) T_1(A)_{\gb}^n.$ Combining this relation, (\ref{e:S3a1})
  and (\ref{e:S3a2}) gives
   \beq
   \left( \oint \gs_{2,1} (A^T, L) \right)'
   = \oint  \{ (4-n) \phi \gs_{2, 1} (A^T, L) +
  \frac{n-2}{2} T_1 ( A)^{n}_j
  \phi^j - \gs_2(L) \phi_n
  \}.
  \eeq

  For $n > 4,$ using (\ref{e:var-L}) we have
   $
   \left( \oint \gs_3 (L) \right)' =
   \oint \{(4-n) \gs_3 (L) \phi + (n-3) \gs_2 (L) \phi_n\}.
   $
   Recall that $\mathcal{B}^2 = \frac{2}{n-2} \gs_{2,1} (A^T, L)+
   \frac{2}{(n-2)(n-3)} \gs_3(L).$ Hence, we obtain
   $\left(\oint \mathcal{B}^2\right)' = (4-n) \oint \mathcal{B}^2 \phi +
   \oint T_1 (A)^n_j \phi^j.$ Going back to (\ref{e:S3}), we
   finally arrive at
   \begin{eqnarray*}
   & &\left(\int \gs_2(A)dV + \oint \mathcal{B}^2 d\Sigma - \Lambda \int dV
   \right)'
   =  (4-n) \left( \int \gs_k \phi + \oint \mathcal{B}^2
   \phi \right) - \Lambda \int n\phi
  \end{eqnarray*} for constant $\Lambda.$
  Since $n-4\neq 0,$ critical points of $\mathcal{F}_2$
  restricted on $\mathcal{M}$ satisfy $\gs_2 = constant$  in $M$ and $\mathcal{B}^2 = 0$ in
  $\de M.$

   For $n = 3,$  note that $\frac{1}{3} h^3 - \frac{1}{2} h |L|^2
   = - \frac{1}{6} \gs_1 (L)^3 + \gs_1(L) \gs_2 (L).$
   Then by  (\ref{e:var-L}), we have
   \begin{eqnarray*}
   \left( \oint - \frac{1}{6} \gs_1 (L)^3 + \gs_1(L) \gs_2 (L) \right)'
   &=& \oint \{(- \frac{1}{6} \gs_1 (L)^3 + \gs_1(L) \gs_2 (L)) \phi + 2 \gs_2 (L) \phi_n\}.
   \end{eqnarray*}
   Recall that $\mathcal{B}^2 = 2 \gs_{2,1} (A^T, L)+
   \frac{1}{3} h^3 - \frac{1}{2} h |L|^2.$ Hence, we obtain
   $\left(\oint \mathcal{B}^2\right)' =  \oint \mathcal{B}^2 \phi +
   \oint T_1 (A)^n_j \phi^j.$ Now the rest of proof is the same
   as $n> 4$ case.

 (b)
   By (\ref{e:var-AtL}),
  \begin{align}
  & \left( \oint \gs_{2k-i-1,i} (A^T, L) \right)'   = \oint  \{ (2k-n) \phi \gs_{2k-i-1, i} (A^T, L) 
  \notag\\
  &+\frac{i}{2k-i-1} T_{2k-i-2, i-1} (A^T, L)^{\ga}_{\gb}
  \phi^{\gb}_{\ga}+ \frac{2k-2i-1}{2k-i-1} (n-2k+i+1) \gs_{2k-i-2, i} (A^T,L) \phi_n
  \}.\label{e:S3b1}
  \end{align}

  For the second term in the last integral, applying integration by
  parts we have
  \begin{align}
  & \oint T_{2k-i-2, i-1} (A^T, L)^{\ga}_{\gb}  \phi^{\gb}_{\ga}
   = \oint \{- T_{2k-i-2, i-1} (A^T, L)^{\ga}_{\gb, \tilde{\ga}} \phi^{\gb}
    - T_{2k-i-2, i-1} (A^T, L)^{\ga}_{\gb} L^{\gb}_{\ga}
    \phi_n\}\notag\\
    &= \oint \{- \frac{(n-2k+i+1)(2k-2i-1)}{2k-i-2}T_{2k-i-3, i-1} (A^T,L)^{\ga}_{\gb}A^n_{\ga}
    \phi^{\gb}\notag\\
    &  + (i-1) T_{2k-i-2, i-2} (A^T, L)^{\ga}_{\gb} A^n_{\ga}
    \phi^{\gb}- (2k-i-1) \gs_{2k-i-1, i-1} (A^T, L) \phi_n\},
    \label{e:S3b2}
  \end{align}
  where in the last equality we use Lemma~\ref{l:divergence}(b) and
  Lemma~\ref{l:variation}(a).

  Now recall that $\mathcal{B}^k = \sum_{i=0}^{k-1} C_1(n,k,i) \gs_{2k-i-1, i}.$
  Combining (\ref{e:S3b1}) and (\ref{e:S3b2}) gives
  $$\left(\oint \mathcal{B}^k\right)' = (2k-n) \oint \mathcal{B}^k \phi +
  \oint I* A^n_{\ga} \phi^{\gb} + \oint II * \phi_n,$$
  where \\
  $I = \sum_{i=0}^{k-1} C_1 (n,k,i)
  [- \frac{(n-2k+i+1)(2k-2i-1)i}{(2k-i-1)(2k-i-2)} T_{2k-i-3, i-1}
  (A^T,L)^{\ga}_{\gb}+ \frac{i(i-1)}{2k-i-1} T_{2k-i-2, i-2} (A^T,
  L)^{\ga}_{\gb}]$ and $II= \sum_{i=0}^{k-1} C_1(n,k,i)
  [-i \gs_{2k-i-1, i-1} (A^T, L)+ \frac{(2k-2i-1)(n-2k+i+1)}{2k-i-1} \gs_{2k-i-2, i} (A^T,L)].$
  By definition, we have $C_1 = \frac{(2k-i-1)!(n-2k+i)!}{(n-k)!(2k-2i-1)!!\, i!}.$
  Straightforward computations yield
  \begin{eqnarray*}
  I&=& \sum_{i=1}^{k-1} - \frac{(2k-i-3)! (n-2k+i+1)!}{(n-k)! (2k-2i-3)!! (i-1)!}
  T_{2k-i-3, i-1}(A^T, L)^{\ga}_{\gb}\\
   &+& \sum_{i=2}^{k-1} \frac{(2k-i-2)!(n-2k+i)!}{(n-k)!
   (2k-2i-1)!!(i-2)!}T_{2k-i-2, i-2} (A^T, L)^{\ga}_{\gb}
    = - T_{k-2}
   (A^T)^{\ga}_{\gb},
  \end{eqnarray*}
  where the terms cancel out except the $i= k-1$
  term in the first summation. For II, 
  \begin{eqnarray*}
  II &=& \sum_{i=1}^{k-1} - \frac{(2k-i-1)! (n-2k+i)!}{(n-k)! (2k-2i-1)!! (i-1)!}
  \gs_{2k-i-1, i-1}(A^T, L)\\
   &+& \sum_{i=0}^{k-1} \frac{(2k-i-2)!(n-2k+i+1)!}{(n-k)!
   (2k-2i-3)!!\, i!} \gs_{2k-i-2, i} (A^T, L)= \gs_{k-1} (A^T),
  \end{eqnarray*}
  where all terms are cancelled except the $i= k-1$
  term in the second summation.

  Finally, using Lemma~\ref{l:Tij} (c) and (d) we obtain
  \begin{eqnarray*}
  \left(\oint \mathcal{B}^k\right)' &=& (2k-n) \oint \mathcal{B}^k \phi +
  \oint -T_{k-2}(A^T)^{\ga}_{\gb} A^n_{\ga} \phi^{\gb} + \oint \gs_{k-1}(A^T)
  \phi_n\\
   &=& (2k-n) \oint \mathcal{B}^k \phi  + \oint T_{k-1} (A)^n_j
   \phi^j.
  \end{eqnarray*}
  Hence, by (\ref{e:S3}) we arrive at
  \begin{eqnarray*}
   & &\left(\int \gs_k(A)dV + \oint \mathcal{B}^k d\Sigma - \Lambda \int dV
   \right)'
   =  (2k-n) \left( \int \gs_k \phi + \oint \mathcal{B}^k
   \phi \right) - \Lambda \int n\phi
  \end{eqnarray*} for constant $\Lambda.$
  Since $n-2k \neq 0,$  this gives the result.
  
 (c) First note that when the boundary is umbilic, by
 (\ref{e:S3L}) we have $\mu' = \mu \phi + \phi_n.$
 Therefore, by (\ref{e:var-AtL}) we have
 \begin{align}
  \left(\oint \gs_i (A^T) \mu^{2k-2i-1}\right)'
  &= \oint \{(2k-n) \gs_i (A^T) \mu^{2k-2i-1}\phi +
 T_{i-1} (A^T)^{\ga}_{\gb} \phi^{\gb}_{\ga}
 \mu^{2k-2i-1}  \notag\\
  &+ (2k -2i-1)\gs_i(A^T) \mu^{2k-2i-2} \phi_n \}. \label{e:S3c1}
 \end{align}

 For the second term in the last integral, applying integration by
  parts we have
  \begin{align}
  & \oint T_{i-1} (A^T)^{\ga}_{\gb}  \phi^{\gb}_{\ga} \mu^{2k-2i-1}
   = \oint T_{i-1} (A^T)^{\ga}_{\gb} (
   \phi^{\tilde{\gb}}_{\tilde{\ga}}- \mu g^{\gb}_{\ga} \phi_n) \mu^{2k-2i-1} \notag\\
      &= \oint \{ (n-i) T_{i-2} (A^T)^{\ga}_{\gb} \phi^{\gb} \mu^{2k-2i} \mu_{\ga}
    - (2k-2i-1)T_{i-1} (A^T)^{\ga}_{\gb} \mu_{\ga}
    \phi^{\gb}\notag\\
    &  - (n-i) \gs_{i-1} (A^T) \mu^{2k-2i} \phi_n\},
    \label{e:S3c2}
  \end{align}
  where in the last equality we use Lemma~\ref{l:divergence}(c) and
  Lemma~\ref{l:bdy}(a).

  Recall that $\mathcal{B}^k = \sum_{i=0}^{k-1} C_2(n,k,i) \gs_i \mu^{2k-2i-1}.$
  Combining (\ref{e:S3c1}) and (\ref{e:S3c2}) gives
  $$\left(\oint \mathcal{B}^k\right)' = (2k-n) \oint \mathcal{B}^k \phi +
  \oint I* \mu_{\ga} \phi^{\gb} + \oint II * \phi_n,$$
  where \\
  $I = \sum_{i=0}^{k-1} C_2 (n,k,i)
  [- (2k-2i-1) T_{i-1}(A^T)^{\ga}_{\gb}\mu^{2k-2i-2}+ (n-i) T_{i-2} (A^T)^{\ga}_{\gb}\mu^{2k-2i}]$
   and $II= \sum_{i=0}^{k-1} C_2(n,k,i)
  [-(n-i) \gs_{i-1} (A^T) \mu^{2k-2i}+ (2k-2i-1) \gs_i (A^T)\mu^{2k-2i-2}].$
  By definition, we have $C_2 = \frac{(n-i-1)!}{(n-k)!(2k-2i-1)!!}.$
  Straightforward computations yield
  \begin{eqnarray*}
  I&=& \sum_{i=1}^{k-1} - \frac{(n-i-1)!}{(n-k)! (2k-2i-3)!!}
  T_{i-1}(A^T)^{\ga}_{\gb} \mu^{2k-2i-2}\\
   &+& \sum_{i=2}^{k-1} \frac{(n-i)!}{(n-k)!
   (2k-2i-1)!!}T_{i-2} (A^T)^{\ga}_{\gb}\mu^{2k-2i}
   = - T_{k-2} (A^T)^{\ga}_{\gb},
  \end{eqnarray*}
  where all terms are cancelled except the $i= k-1$
  term in the first summation. For II ,
  \begin{eqnarray*}
  II &=& \sum_{i=1}^{k-1} - \frac{(n-i)!}{(n-k)! (2k-2i-1)!!}
  \gs_{i-1}(A^T)\mu^{2k-2i}\\
   &+& \sum_{i=0}^{k-1} \frac{(n-i-1)!}{(n-k)!
   (2k-2i-3)!!} \gs_i (A^T) \mu^{2k-2i-2}
   =   \gs_{k-1} (A^T),
  \end{eqnarray*}
  where all terms are cancelled except the $i= k-1$
  term in the second summation.

  Noting that by Lemma~{\ref{l:bdy}}, we have $A^n_{\ga} = \mu_{\ga}.$ As a result, we obtain
 $
  \left(\oint \mathcal{B}^k\right)' = (2k-n) \oint \mathcal{B}^k \phi +
  \oint -T_{k-2}(A^T)^{\ga}_{\gb} \mu_{\ga} \phi^{\gb} + \oint \gs_{k-1}(A^T)
  \phi_n.
  $
   By Lemma~\ref{l:Tij} (c) and (d), this gives $(\oint \mathcal{B}^k)' = (2k-n) \oint \mathcal{B}^k \phi  + \oint T_{k-1} (A)^n_j
   \phi^j.$
  Hence, by (\ref{e:S3}) we finally arrive at
  \begin{eqnarray*}
   & &\left(\int \gs_k(A)dV + \oint \mathcal{B}^k d\Sigma - \Lambda \int dV
   \right)'
   =  (2k-n) \left( \int \gs_k \phi + \oint \mathcal{B}^k
   \phi \right) - \Lambda \int n\phi
  \end{eqnarray*} for constant $\Lambda.$
 \epf

 \bpf[Proof of Corollary~\ref{c:variation}]
  Let $g_t= e^{-2u_t} g$ be a conformal
 variation of $g$ such that $u_0= 0$ and $u'_t|_0= \phi.$
 Since $\mathcal{L}(g_t) = e^{(2k-1)u_t} \mathcal{L}(g),$
 we have $\mathcal{L}'= (2k-1)\phi \mathcal{L}.$
 Therefore,
 $\left(\oint \mathcal{L} d\Sigma\right)' = (2k-n) \oint \mathcal{L} d\Sigma.$
 Combining the above formula with the results of
 Theorem~\ref{t:variation} gives
$
   \left(\int \gs_k(A)dV + \oint (\mathcal{B}^k+ \mathcal{L})d\Sigma - \Lambda \int dV
   \right)'
   =  (2k-n) ( \int \gs_k \phi + \oint (\mathcal{B}^k +
   \mathcal{L})
   \phi ) - \Lambda \int n\phi.
  $
  \epf
 \section{Conformal Invariants $\mathcal{Y}_k$} \label{s:conf-inv}

 In this section, we first show that  $\mathcal{F}_{\frac{n}{2}}$ is a conformal invariant and then  we prove Theorem~\ref{t:lcf-inv}.
 Let $\mathcal{L}_4 (g) = -2 \gs_1(A^T) h -2 (n-3) A_{\ga \gb} L^{\ga \gb}
  + 2 R^{\gc}_{\ga \gc \gb} L^{\ga \gb},$ which   satisfies $\mathcal{L}_4 (\hat g)= e^{3u} \mathcal{L}_4 (g)$; see \cite{BG94}. 
 \begin{proposition}\label{p:gb}
  Let $(M, g)$ be a compact manifold of dimension $n \geq 3$ with boundary.
   \\
  (a) When $n = 4$, then $\mathcal{B}^2 = \frac{1}{2}\mathcal{B}+ \frac{1}{4} \mathcal{L}_4.$
    Therefore, $\mathcal{F}_2= 2 \pi^2 \chi (M, \de M)
    -\frac{1}{16} \int |\mathcal{W}|^2+ \frac{1}{4}\oint \mathcal{L}_4$ is a conformal invariant.\\
  (b) Suppose $M$ is  locally conformally flat.
  When $n = 2k,$ then $\mathcal{F}_{\frac{n}{2}} =
  \frac{(2 \pi)^{\frac{n}{2}}}{(\frac{n}{2})!} \chi (M, \de M).$
 \end{proposition}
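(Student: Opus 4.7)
For part (a), my plan is direct computation. Expanding Definition~\ref{d:mix-sym} via the Kronecker symbol gives
\[
\gs_{2,1}(A^T,L) = \tfrac{1}{2}\bigl(\gs_1(A^T)\,h - A_{\ga\gb}L^{\ga\gb}\bigr), \qquad \gs_3(L) = \tfrac{1}{6}h^3 - \tfrac{1}{2}h|L|^2 + \tfrac{1}{3}tr\,L^3,
\]
so at $n=4$, $\mathcal{B}^2 = \gs_{2,1}(A^T,L) + \gs_3(L)$ is an explicit polynomial in these quantities. The Schouten-tensor relations $tr\,A = R/6$ and $A_{nn} = \tfrac{1}{2}(R_{nn}-R/6)$ give $\gs_1(A^T) = R/4 - R_{nn}/2$. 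A term-by-term comparison of $\mathcal{B}^2$ with $\tfrac{1}{2}\mathcal{B} + \tfrac{1}{4}\mathcal{L}_4$ (where $R_{\gc\ga\gc\gb}L^{\ga\gb}$ in $\mathcal{B}$ cancels against $R^{\gc}_{\ \ga\gc\gb}L^{\ga\gb}$ in $\mathcal{L}_4$, and the $Rh$, $R_{nn}h$ and $\gs_1(A^T)h$ terms balance via the identity above) verifies the claimed relation. Substituting into the Gauss--Bonnet formula (\ref{e:cgb}) then yields
\[
\mathcal{F}_2 = 2\pi^2\chi(M,\de M) - \tfrac{1}{16}\int |\mathcal{W}|^2 + \tfrac{1}{4}\oint\mathcal{L}_4,
\]
and each summand is conformally invariant: $\chi(M,\de M)$ is topological, $|\mathcal{W}|^2\,dV$ is pointwise conformally invariant in dimension four, and the assumed weight $e^{3u}$ of $\mathcal{L}_4$ combined with $dS_{\hat g} = e^{-3u}\,dS_g$ makes $\oint\mathcal{L}_4$ invariant.

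For part (b), the starting observation is that on any LCF manifold $\mathcal{L}_4$ vanishes identically on the boundary: the LCF decomposition $R_{ijkl} = A_{ik}g_{jl} + A_{jl}g_{ik} - A_{il}g_{jk} - A_{jk}g_{il}$ yields $R^{\gc}_{\ \ga\gc\gb} = \gs_1(A^T)g_{\ga\gb} + (n-3)A_{\ga\gb}$ upon tangential contraction, so the three pieces of $\mathcal{L}_4$ cancel. Combined with $\mathcal{W}=0$ and part (a), this already disposes of the $n=4$ case. For general $n=2k$ I plan to apply the Chern--Gauss--Bonnet formula for manifolds with boundary,
\[
(2\pi)^{n/2}(n/2)!\,\chi(M,\de M) = \int_M \text{Pff}(R)\,dV + \oint_{\de M}\Pi(R,L)\,dS,
\]
where $\Pi$ is the associated Chern transgression. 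Viaclovsky's closed-manifold argument shows that under LCF the Pfaffian reduces to a specific constant multiple of $\gs_{n/2}(A)$, and the analogous substitution of the LCF decomposition of $R$ into $\Pi$ should produce the corresponding multiple of $\mathcal{B}^k$.

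The main obstacle I anticipate is this boundary combinatorial matching: unpacking $\Pi$ in terms of $A^T$ and $L$ and regrouping, via the mixed symmetric-function identities of Lemma~\ref{l:variation} and the Kronecker-determinant calculus of Lemma~\ref{l:Tij}, so that the coefficients $C_1(n,k,i)$ in (\ref{e:bk}) emerge from the binomial coefficients of the transgression. A parallel route I would pursue as a consistency check is to first note that $\mathcal{F}_{n/2}$ is conformally invariant on LCF manifolds with boundary---the variational calculation in the proof of Theorem~\ref{t:variation}(b) carries an overall factor $(2k-n)$ that vanishes at $n=2k$, so the very same computation shows $\mathcal{F}_{n/2}$ is invariant under conformal deformations in the LCF case---and then evaluate $\mathcal{F}_{n/2}$ on a conformal representative with totally geodesic boundary, in which case $\mathcal{B}^k \equiv 0$ by Lemma~\ref{l:b=0}, and invoke Viaclovsky's closed-manifold result via doubling. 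The technical subtlety there is that doubling genuinely preserves the LCF property only when the boundary has been made totally geodesic (not merely when $\mathcal{B}^k=0$), which would require auxiliary deformation.
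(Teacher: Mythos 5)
Part (a) of your proposal matches the paper's proof exactly: the paper writes $\mathcal{B}^2=\gs_{2,1}(A^T,L)+\gs_{3,0}(A^T,L)$ at $n=4$, expands these as you do, checks the identity $\mathcal{B}^2=\tfrac{1}{2}\mathcal{B}+\tfrac{1}{4}\mathcal{L}_4$ ``by direct computations,'' and then invokes the conformal weight $e^{3u}$ of $\mathcal{L}_4$ against $dS_{\hat g}=e^{-3u}dS_g$ together with the conformal invariance of $|\mathcal{W}|^2$ in dimension four. Your consistency observation that the LCF decomposition forces $R^{\gc}_{\;\ga\gc\gb}=\gs_1(A^T)g_{\ga\gb}+(n-3)A_{\ga\gb}$, hence $\mathcal{L}_4\equiv 0$, is correct and a nice sanity check of the constant $2\pi^2=\tfrac{(2\pi)^2}{2!}$.

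For part (b) you have identified the same strategy the paper uses: substitute the LCF decomposition of $R_{ijkl}$ into the Chern transgression term of the Gauss--Bonnet formula with boundary and match coefficients against the definition (\ref{e:bk}) of $\mathcal{B}^{n/2}$. However, the combinatorial matching you flag as ``the main obstacle I anticipate'' is in fact the entire substance of the paper's proof of (b), and you leave it undone. In the paper, the boundary density is
$$Q_{i,n}=\frac{2^{n/2-2i}}{i!\,(n-1-2i)!!}\sum\left(\begin{array}{l}\ga_1\cdots\ga_{n-1}\\\gb_1\cdots\gb_{n-1}\end{array}\right)R_{\ga_1\ga_2}^{\;\;\;\gb_1\gb_2}\cdots R_{\ga_{2i-1}\ga_{2i}}^{\quad\;\gb_{2i-1}\gb_{2i}}L_{\ga_{2i+1}}^{\gb_{2i+1}}\cdots L_{\ga_{n-1}}^{\gb_{n-1}},$$
and after inserting $R_{\ga\gb}^{\;\;\gc\gd}=2(A^\gc_\ga g^\gd_\gb-A^\gd_\ga g^\gc_\gb)$ one uses the antisymmetry of the generalized Kronecker delta to collapse the $i$ curvature factors, picking up a factor $i!\,2^{2i}$ while shortening the determinant from $n-1$ to $n-1-i$ columns; the result is $\frac{2^{n/2}(n-1-i)!}{(n-1-2i)!!}\gs_{n-1-i,\,i}(A^T,L)=2^{n/2}(\tfrac{n}{2})!\,C_1(n,\tfrac n2,i)\gs_{n-1-i,i}(A^T,L)$, and the sum over $i$ gives precisely $2^{n/2}(\tfrac n2)!\,\mathcal{B}^{n/2}$. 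Without this substitution your proof is a plan, not a proof: the normalization $C_1(n,k,i)$ in (\ref{e:bk}) only emerges from this step.

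Your alternative route via conformal invariance and doubling has the subtlety you already name, and a second one: even when the boundary is made totally geodesic, the even reflection of the metric across $\de M$ is generically only $C^{1,1}$ (the third normal jet of $g$ need not vanish), so the double is not canonically a smooth LCF manifold and one must either argue by approximation in the Gauss--Bonnet formula or impose further conformal normal form at the boundary. This makes the doubling route substantially more delicate than the direct substitution into $Q_{i,n}$ that the paper carries out, and I would not recommend it as the primary argument.
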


 \bpf[Proof of Proposition~\ref{p:gb}]  (a) By Lemma~\ref{l:variation} (a), we have
    $
   \mathcal{B}^2 =\gs_{2,1} (A^T, L)+ \gs_{3,0} (A^T, L)
     =  \frac{1}{2} \gs_1 (A^T) h - \frac{1}{2}
   L^{\ga}_{\gb} A^{\gb}_{\ga} + \frac{1}{3} tr L^3 + \frac{1}{6}
   h^3 -  \frac{1}{2}h |L|^2,
  $
   which is equal to $\frac{1}{2}\mathcal{B}+ \frac{1}{4} \mathcal{L}_4$
   by direct computations.
  Since $\mathcal{W}$ and $\mathcal{L}_4$ are local conformal
  invariants, $\mathcal{F}_2$ is then a conformal invariant.

  (b) Recall the  Gauss-Bonnet formulas
   $(4 \pi)^{\frac{n}{2}}\chi(M, \de M) = \int E_n dV + \oint \sum_i Q_{i,n} d\Sigma,$
  where $E_n = (2^{\frac{n}{2}} (\frac{n}{2})!)^{-1} \sum
  \left(  \begin{array}{l}
    i_1 \cdots i_n \\
    j_1 \cdots j_n
    \end{array} \right)
  R_{i_1 i_2}^{\quad j_1 j_2}\cdots R_{i_{n-1} i_n}^{\qquad j_{n-1} j_n}$ and\\
   $Q_{i,n} =  \frac{2^{\frac{n}{2}-2i}}{i! (n-1-2i)!!} \sum \left(  \begin{array}{l}
    \ga_1 \cdots \ga_{n-1}\\
    \gb_1 \cdots \gb_{n-1}
    \end{array} \right)
   R_{\ga_1 \ga_2}^{\quad\;\; \gb_1 \gb_2}\cdots R_{\ga_{2i-1} \ga_{2i}}^{\qquad\quad \gb_{2i-1} \gb_{2i}}
   L_{\ga_{2i+1}}^{ \gb_{2i+1}} \cdots L_{\ga_{n-1}}^{ \gb_{n-1}}.$
   When the manifold is locally conformally flat, by the curvature
   decomposition $R_{ijkl}= A_{ik} g_{jl} + A_{jl}g_{ik} -A_{il}g_{jk}- A_{jk} g_{il}.$
   It has been shown in \cite{Via00} that $E_n = 2^{\frac{n}{2}} (\frac{n}{2})! \gs_{\frac{n}{2}} (A).$
   We only need to compute $Q_{i, n}.$
   \begin{eqnarray*}
    Q_{i,n} &=&  \frac{2^{\frac{n}{2}-2i}}{i! (n-1-2i)!!} \sum \left(  \begin{array}{l}
    \ga_1 \cdots \ga_{n-1}\\
    \gb_1 \cdots \gb_{n-1}
    \end{array} \right)
   2^i(A_{\ga_1}^{\gb_1} g_{\ga_1}^{\gb_1}+ A_{\ga_2}^{\gb_2}
   g_{\ga_1}^{\gb_1})   \cdots \\
   & &(A_{\ga_{2i-1}}^{\gb_{2i-1}} g_{\ga_{2i}}^{\gb_{2i}}+ A_{\ga_{2i}}^{\gb_{2i}}
   g_{\ga_{2i-1}}^{\gb_{2i-1}})  L_{\ga_{2i+1}}^{\gb_{2i+1}} \cdots
   L_{\ga_{n-1}}^{\gb_{n-1}}\\
   &=& \frac{2^{\frac{n}{2}-2i}}{i! (n-1-2i)!!} \sum \left(  \begin{array}{l}
    \ga_1 \cdots \ga_i \, \ga_{2i+1} \cdots \ga_{n-1}\\
    \gb_1 \cdots \gb_i \, \gb_{2i+1}\cdots \gb_{n-1}
    \end{array} \right)
   i! 2^{2i} A_{\ga_1}^{\gb_1}\cdots A_{\ga_i}^{\gb_i} L_{\ga_{2i+1}}^{\gb_{2i+1}} \cdots
   L_{\ga_{n-1}}^{\gb_{n-1}}\\
   &=& \frac{2^{\frac{n}{2}}(n-1-i)!}{(n-1-2i)!!} \gs_{n-1-i,
   i}(A^T, L)= 2^{\frac{n}{2}} (\frac{n}{2})! C_1 (n, \frac{n}{2}, i).
   \end{eqnarray*} \epf


 \bpf[Proof of Theorem~\ref{t:lcf-inv}]
  We will show that there exists a conformal metric $\hat g$ such
  that $A_{\hat g} \in \Gamma^+_k$ and the boundary is totally
  geodesic. Then by the result in \cite{Chen05a}, we can find a
  conformal metric $\tilde{g}$ such that $\gs_k (A_{\tilde{g}}) = 1$
 and the boundary is totally geodesic.

  Let the background
  metric $g$ be a Yamabe metric such that $R= constant > 0$ and
  the boundary is totally geodesic. 
  We  prove inductively that we can
  find $\hat g$ such that $A_{\hat g} \in \Gamma^+_m$ for $m \leq k.$
  Suppose $g$ satisfies $A_g \in \Gamma^+_{m-1}$ and the boundary
  is totally geodesic.
  Define
  $A^t_{m-1} =  A+ \frac{1-t}{2} \gs_{m-1}^{\frac{1}{m-1}}(A)g.$
  Under the conformal change $\hat g = e^{-2u}g$, the tensor $\hat A^t_{m-1}$ satisfies
  $ \hat A^t_{m-1}
  = \hat A+ \frac{1-t}{2}
  \gs_{m-1}^{\frac{1}{m-1}} (g^{-1}\hat A)g,
 $ where $\hat A = \hess u + du \otimes du  - \frac{1}{2} |\gra u|^2 g + A.$
 Since $\gs_{m-1}(A)$ is positive, we  choose a large number $\Theta$
  such that $ A^{-\Theta}_{m-1}$ is positive
 definite. Let $f(x) = \gs_m^{\frac{1}{m}} (A^{-\Theta}_{m-1})> 0$.
Consider the following path of equations for $- \Theta \leq t \leq
1:$
 \begin{equation} \label {e:lcf-pos}\left\{  \begin{array}{ll}
 \gs_m^{\frac{1}{m}} (g^{-1} \hat A^t_{m-1})= f(x) e^{2u} & in \, M  \\
  \frac{\de u}{\de n} = 0 & on \,\de M,
  \end{array}\right .
  \end{equation}
 where $\hat A \in \Gamma^t_m =\{\gl : \gl \in \Gamma^+_{m-1}, \gl +
  \frac{1-t}{2}\gs_{m-1}^{\frac{1}{m-1}}(\gl)e \in \Gamma^+_m \}.$
  Note that if $\gl +  \frac{1-t}{2}\gs_{m-1}^{\frac{1}{m-1}}(\gl)e \in \Gamma^+_m,$
  then we must have $\gl \in \Gamma^+_{m-1}$ along the path
  because $\gs_{m-1} (\gl +
  \frac{1-t}{2}\gs_{m-1}^{\frac{1}{m-1}}(\gl)e)$ can not be zero.

 Let $\mathcal{S} = \{ t\in [-\Theta, 1]: \exists$ a solution $u \in C^{2, \ga}(M)$
 to (\ref{e:lcf-pos}) with $ \hat A \in \Gamma^t_m\}.$ At $t= -\Theta$, we have
 $u \equiv 0$ is a solution and $A^{-\Theta}_{m-1} \in \Gamma^+_m$.  Consider the
 linearized operator $\mathcal{P}^t:$
 \begin{lemma}
  The linearized operator $\mathcal{P}^t: C^{2, \ga}(M) \cap \{\frac{\de u}{\de n}|_{\de M} = 0\}
 \rightarrow C^{\ga} (M)$ is invertible.
 \end{lemma}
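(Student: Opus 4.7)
The plan is to verify three properties of $\mathcal{P}^t$ — ellipticity, strict negativity of its zeroth-order coefficient, and applicability of the standard Fredholm theory for oblique boundary value problems — which together force invertibility.

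First, I would compute the linearization explicitly. At a solution $u$ with $\hat A \in \Gamma^t_m$, differentiating the equation $\gs_m^{1/m}(g^{-1}\hat A^t_{m-1}) = f e^{2u}$ in the direction $\phi$ yields a second-order linear operator of the form
\[
\mathcal{P}^t(\phi) = a^{ij}\phi_{ij} + b^i \phi_i - 2 f e^{2u}\phi,
\]
where $a^{ij}$ collects the derivative $\de \gs_m^{1/m}/\de W_{ij}$ evaluated at $W = g^{-1}\hat A^t_{m-1}$ together with a nonnegative multiple of $g^{ij}$ arising from linearizing the shift $\frac{1-t}{2}\gs_{m-1}^{1/(m-1)}(g^{-1}\hat A)g$, and $b^i$ absorbs the first-order-in-$\phi$ contributions of $du\otimes du - \frac{1}{2}|\gra u|^2 g$. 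Ellipticity of $a^{ij}$ is immediate from Lemma~\ref{l:sigma}(b) since $\hat A^t_{m-1} \in \Gamma^+_m$ along the path, and the shift contribution is nonnegative because $1-t \ge 0$.

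Second, I would observe that the only zeroth-order-in-$\phi$ term comes from the right-hand side $fe^{2u}$, producing $-2fe^{2u}\phi$. Indeed, the quadratic pieces $du\otimes du - \frac{1}{2}|\gra u|^2 g$ and the shift $\gs_{m-1}^{1/(m-1)}(g^{-1}\hat A)g$ depend on $u$ only through its gradient and Hessian, so their linearizations contribute only to the second- and first-order parts of $\mathcal{P}^t$. Since $f > 0$, the zeroth-order coefficient of $\mathcal{P}^t$ is strictly negative.

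Third, with $\mathcal{P}^t$ uniformly elliptic and having strictly negative zeroth-order coefficient on the compact manifold $M$ with Neumann boundary condition, the strong maximum principle together with the Hopf lemma forces triviality of the kernel: any $\phi \not\equiv 0$ satisfying $\mathcal{P}^t\phi = 0$ and $\de \phi/\de n = 0$ on $\de M$ must attain either a positive maximum or a negative minimum on $M$; an interior extremum is inconsistent with $\mathcal{P}^t\phi = 0$, while a boundary extremum contradicts the Hopf lemma via the Neumann condition. Injectivity combined with the standard Fredholm theory for oblique boundary value problems (\cite{GT}, Chapter~6) then yields that $\mathcal{P}^t$ is an isomorphism from $C^{2,\ga}(M) \cap \{\de u/\de n = 0 \text{ on } \de M\}$ onto $C^{\ga}(M)$. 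The main technical point is the claim that the linearization of $\hat A^t_{m-1}$ produces no zeroth-order-in-$\phi$ contribution; this follows directly from the conformal transformation formula for the Schouten tensor and is precisely what lets the simple sign analysis above close the argument.
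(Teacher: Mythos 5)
Your proposal is correct and follows essentially the same route as the paper: linearize the equation, observe that the resulting second-order coefficient matrix is positive definite (so the operator is elliptic) and the zeroth-order coefficient is strictly negative, then invoke elliptic theory for the Neumann problem. One small imprecision: the linearization of the shift $\frac{1-t}{2}\gs_{m-1}^{1/(m-1)}(g^{-1}\hat A)g$ contributes to $a^{ij}$ a positive multiple of $T_{m-2}(g^{-1}\hat A)^{ij}$ rather than of $g^{ij}$ as you wrote, but since $\hat A \in \Gamma^+_{m-1}$ this matrix is positive definite and the conclusion is unaffected.
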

 \bpf
  Let $F^t= \gs_m (g^{-1} \hat A^t_{m-1}) - f^m e^{2mu}$  and $u_s$
  be a variation of $u$ such that $u'= \phi$ at $s=0.$ Then
  \begin{eqnarray*}
  \mathcal{P}^t &=& (F^t)'|_{s=0} = T_{m-1} (g^{-1} \hat A^t_{m-1})^{ij}  (g^{-1} \hat A^t_{m-1})_{ij}'
   - 2m f^m e^{2mu} \phi\\
    &=& [T_{m-1} (g^{-1} \hat A^t_{m-1})^{ij} +
    \frac{1-t}{2} \gs_{m-1}^{-\frac{m-2}{m-1}} (g^{-1} \hat
    A) tr_g T_{m-1} (g^{-1} \hat A^t_{m-1}) T_{m-2}(g^{-1}\hat A)^{ij}] \phi_{ij}  \\
    &+& \text{1st derivatives in}\, \phi   - 2m f^m e^{2mu} \phi.
  \end{eqnarray*}
   Since the terms in the parenthesis are positive,   the linearized operator is invertible.
 \epf
 The above lemma and the implicit function theorem imply that $\mathcal{S}$ is open.
 To complete the proof, it remains to establish a priori estimates for solutions to
 (\ref{e:lcf-pos}).

 (1) $C^0$ estimates.

   Since $\frac{\de u}{\de n} = 0,$ at the maximal point $x_0$ of $u,$  we have $|\gra u| = 0$ and
    $\hess u (x_0)$ is negative
   semi-definite, no matter $x_0$ being in the interior or at the boundary. 
   Hence,
   $
    f(x_0) e^{2u(x_0)} = \gs_m^{\frac{1}{m}}(g^{-1} \hat A^t_{m-1})
     \leq \gs_m^{\frac{1}{m}} ( \frac{1-t}{2}
    \gs_{m-1}^{\frac{1}{m-1}} (A)g + A) \leq C,
   $
   where in the  inequality we use $t \leq 1.$
  Therefore, $u$ is  upper bounded.

   Now by \cite{Chen05} and Theorem~\ref{t:bdy} (a), we have $|\gra u|<
   C.$ Thus, $\sup_M u \leq \inf_M u + C.$
    Integrating the equation,
  $$\begin{array} {ll}
   \int f^m e^{4 m u} dV_{\hat g} &= \int e^{2mu} \gs_m ( g^{-1} \hat A^t_{m-1}) dV_{\hat g}
   =\int \gs_m (\hat g^{-1} \hat A^t_{m-1}) dV_{\hat g}\\
    &= \int \sum_{i=0}^m \binom{n-i}{m-i} (\frac{1-t}{2})^{m-i} \gs_i (\hat g^{-1} \hat A)
    \gs_{m-1}^{\frac{m-i}{m-1}} (\hat g^{-1} \hat A) dV_{\hat g} \geq \int \gs_m
    (\hat g^{-1} \hat A) dV_{\hat g},
   \end{array}$$
   where we drop the terms for $i= 0, \cdots, m-1,$ which are
   nonnegative. Since the boundary is totally geodesic, we have $\mathcal{B}^k= 0.$
   Therefore,
   \begin{eqnarray*}
   0 < \mathcal{Y}_m \leq \frac{\int f^m e^{4 m u} dV_{\hat g}}{(\int dV_{\hat g})^{\frac{n-2m}{n}}}
    \leq C (\sup e^{4m u}) (\int dV_{\hat g})^{\frac{2m}{n}} \leq
    C e^{4m \sup u} e^{-2m \inf u}.
   \end{eqnarray*} Since $\sup_M u \leq \inf_M u + C,$ we then
   have $0 < \mathcal{Y}_m \leq C e^{2m \sup u + C}.$

 (2) $C^{\infty}$ estimates

  By \cite{Chen05} and Theorem~\ref{t:bdy} (a), we get interior and boundary $C^2$
  estimates, respectively. Higher order regularity follows the
  same way as in (3) in the proof of
  Proposition~\ref{p:pos}.
 \epf

 \section{Proofs of Theorem~\ref{t:bdy} and Corollary~\ref{c:lcfbdy}} \label{s:bdy}

  \bpf[Proof of Theorem~\ref{t:bdy}]
 Let $W = \hess u + du \otimes du -\frac{1}{2}|\gra u|^2 g + S(x).$
 The condition $\Gamma^+_1 \subset \Gamma$
 gives $0 < tr_g\, \hat{A} = \Delta u -\frac{(n-2)}{2}|\gra u|^2+ tr_g S(x).$
 Thus, $\Delta u $ has a lower bound and
 \beq \label{i:gra}
 |\gra u|^2 < C (\Delta u + 1).
 \eeq
 We first prove a lemma which will be used later to control the
 boundary behavior of $u.$
 \begin{lemma}\label{l:S-bdy} Let $W$ be defined as above.
   Under the same conditions as in Theorem~\ref{t:bdy}, we have\\
  (a) $W_{n \ga} =0$ on $\de M$ and hence $F_{\ga n} =0$ on $\de
  M;$\\
   (b) $W_{\ga \gb, n} - 2 \mu W_{\ga \gb} \leq - \hat \mu e^{-u} (W_{\ga \gb}+ W_{nn} g_{\ga \gb}).$
 \end{lemma}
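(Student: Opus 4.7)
The plan is to establish (a) by direct substitution of the boundary formulas already in the paper, and (b) by a longer but structurally transparent expansion followed by two applications of the hypothesis on $S$.

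For (a), I would write $W_{n\alpha} = u_{n\alpha} + u_n u_\alpha + S_{n\alpha}$ and substitute the formula $u_{n\alpha} = -\mu_\alpha + \mu u_\alpha - \hat\mu u_\alpha e^{-u}$ from (\ref{e:nga}) together with the boundary condition $u_n = -\mu + \hat\mu e^{-u}$. The $u_n u_\alpha$ piece cancels exactly the $\mu u_\alpha - \hat\mu e^{-u}u_\alpha$ pair, leaving $-\mu_\alpha + S_{n\alpha}$, which vanishes by (T0). To pass from $W_{\alpha n}=0$ to $F^{\alpha n}=0$, I would note that the vanishing of the off-diagonal block forces $\partial_n$ to be an eigenvector of $W$; diagonalizing the tangential block $[W_{\alpha\beta}]$ produces an orthonormal frame in which $W$ is diagonal, and symmetry of $F$ in its eigenvalues forces $F^{ij}=\partial F/\partial W_{ij}$ to be diagonal in the same frame, giving $F^{\alpha n}=0$.

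For (b), the strategy is to expand $W_{\alpha\beta,n}-2\mu W_{\alpha\beta}$ piece by piece (Hessian, $du\otimes du$, $-\tfrac12|\nabla u|^2 g$, and $S$), using (\ref{e:ngagb}) for $u_{\alpha\beta,n}$ and (\ref{e:nga}) for the first derivatives of $u_n$, and substituting $u_n = -\mu+\hat\mu e^{-u}$ everywhere. A block of cancellations is expected: the $\mu_\alpha u_\beta+\mu_\beta u_\alpha$ contributions from the Hessian formula are killed against those from $(u_\alpha u_\beta)_{,n}$; the $\mu^\gamma u_\gamma g_{\alpha\beta}$ contribution cancels against the corresponding term in $(|\nabla u|^2)_{,n}$; and the $\mu u_n^2 g_{\alpha\beta}$ pieces cancel. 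After this cleanup I expect all the coefficients of $u_{\alpha\beta}$, $u_{nn}g_{\alpha\beta}$, $u_\alpha u_\beta$, and $|u^T|^2 g_{\alpha\beta}$ to collapse to a uniform $\pm \hat\mu e^{-u}$, and the remaining pieces to consist exactly of $R_{n\beta\alpha n}u_n - \mu_{\tilde\alpha\tilde\beta} + S_{\alpha\beta,n} - 2\mu S_{\alpha\beta}$, where the Riemann factor comes from the Codazzi-based identity $u_{\alpha\beta n}=u_{n\alpha\beta}+R_{n\beta\alpha j}u^j$ restricted to $j=n$.

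At this point the two hypotheses on $S$ are used in sequence. The combined terms $R_{n\beta\alpha n}\mu-\mu_{\tilde\alpha\tilde\beta}+S_{\alpha\beta,n}-2\mu S_{\alpha\beta}$ are nonpositive by (T2), so one is left with $W_{\alpha\beta,n}-2\mu W_{\alpha\beta}\le -\hat\mu e^{-u}\bigl[u_{\alpha\beta}+u_{nn}g_{\alpha\beta}+u_\alpha u_\beta-|u^T|^2 g_{\alpha\beta}+R_{\alpha n\beta n}\bigr]$ after using the Riemann pair-antisymmetry $R_{n\beta\alpha n}=-R_{\alpha n\beta n}$. The identity $W_{\alpha\beta}+W_{nn}g_{\alpha\beta}=u_{\alpha\beta}+u_{nn}g_{\alpha\beta}+u_\alpha u_\beta-|u^T|^2 g_{\alpha\beta}+S_{\alpha\beta}+S_{nn}g_{\alpha\beta}$ (where the trace of $|\nabla u|^2 g_{\alpha\beta}$ drops the $u_n^2$ part) reduces the target to comparing $R_{\alpha n\beta n}$ with $S_{\alpha\beta}+S_{nn}g_{\alpha\beta}$; hypothesis (T1) is exactly the tensor inequality needed, and the sign flip on multiplication by the positive factor $\hat\mu e^{-u}$ closes the estimate.

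The main obstacle is purely bookkeeping: the expansion produces a large number of terms, and one must carefully track sign conventions (especially $R_{n\beta\alpha n}=-R_{\alpha n\beta n}$) so that every non-$\hat\mu e^{-u}$ remainder aggregates into the precise combination controlled by (T2). The conceptual content, however, is clean: conditions (T0)--(T2) are tuned exactly so that $W$'s off-diagonal block vanishes on $\partial M$ and its tangential block satisfies a clean oblique-type inequality, which is exactly what is needed to drive the boundary $C^2$ estimates later.
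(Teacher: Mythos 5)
Your proof is correct and the content aligns with the paper's argument, with one genuine difference in part (a). The first half of (a) — the exact cancellation $u_{n\alpha}+u_n u_\alpha = -\mu_\alpha$ followed by (T0) — is identical to the paper's. For the deduction $F^{\alpha n}=0$, you use an eigenvector/diagonalization argument: since $W_{n\alpha}=0$ forces $\partial_n$ to be an eigenvector of the symmetric matrix $W$, diagonalizing the tangential block produces a frame in which $W$, and hence $F^{ij}$, is diagonal, and the tangential change of basis preserves the vanishing of the $\alpha n$ entries. The paper instead works algebraically with Newton tensors: it shows by induction on $i$, via the recursion $(T_i)_{\alpha n}=\sigma_i g_{\alpha n}-(T_{i-1})_{\alpha j}W_{jn}$, that $\partial\sigma_i(W)/\partial W_{\alpha n}=(T_{i-1})_{\alpha n}=0$ for all $i$, and then uses that $F$ is a function of the $\sigma_i$'s. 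Both are correct; yours is more geometric and slightly shorter, the paper's is more self-contained in terms of the specific algebraic apparatus it builds. Part (b) follows the same mechanics as the paper — the expansion via (\ref{e:nga}) and (\ref{e:ngagb}), the cancellations you list, the sign identity $R_{n\beta\alpha n}=-R_{\alpha n\beta n}$, and the two hypotheses (T2) (absorbing the curvature/Hessian-of-$\mu$/$S$-derivative remainder) and then (T1) (comparing $R_{\alpha n\beta n}$ with $S_{\alpha\beta}+S_{nn}g_{\alpha\beta}$ after multiplication by $\hat\mu e^{-u}\ge 0$). Your reference to the ``$\mu u_n^2 g_{\alpha\beta}$ pieces cancel'' is a step that only holds once the $-2\mu W_{\alpha\beta}$ subtraction (which contributes $+\mu|\nabla u|^2 g_{\alpha\beta}=\mu(u_\gamma^2+u_n^2)g_{\alpha\beta}$) is included, as your framing of expanding $W_{\alpha\beta,n}-2\mu W_{\alpha\beta}$ as a unit makes clear — so this is fine, just worth stating explicitly.
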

  \bpf
  (a) By (\ref{e:nga}) and (T0),
     $$W_{\ga n} = u_{\ga n}+ u_n u_{\ga} + S_{\ga n} =
     -\mu_{\ga} + \mu u_{\ga} - \hat \mu u_{\ga} e^{-u} + (- \mu + \hat \mu e^{-u}) u_{\ga}
     + S_{\ga n} =0.$$
    To prove $F_{\ga n} = 0,$  since $F$ is a function of $\gs_i$, we only need to
 show that $\frac{\de \gs_i (W)}{\de W_{\ga n}} = (T_{i-1})_{\ga n} = 0$
for all $i.$ For $i= 1,$ by definition $(T_1)_{\ga n} = \gs_1(W)
g_{\ga n} - W_{\ga n}=0.$ For general $i,$ notice the recursive
relation $(T_i)_{\ga n} = \gs_i (W) g_{\ga n} - (T_{i-1})_{\ga j}
W_{j n}.$ Applying the induction hypothesis gives $(T_i)_{\ga n} =
- (T_{i-1})_{\ga \gb} W_{\gb n} = 0.$

  (b) By (\ref{e:nga}) and (\ref{e:ngagb}),
    \begin{eqnarray*}
     W_{\ga \gb, n} &=& u_{\ga \gb n} + u_{\ga} u_{\gb n} + u_{\ga
     n}u_{\gb} - u_l u_{ln} g_{\ga \gb} + S_{\ga \gb}\\
    &=& (2 \mu - \hat \mu e^{-u}) (u_{\ga \gb} +
     u_{\ga} u_{\gb}) - \mu_{\tilde{\ga} \tilde{\gb}}+
      R_{n \gb \ga n} (-\mu + \hat \mu e^{-u}) - \mu u_n^2 g_{\ga
      \gb}\\
      & &- (\mu - \hat \mu e^{-u}) u_{\gc}^2 g_{\ga \gb} - \hat \mu
      e^{-u} u_{nn} g_{\ga \gb} + S_{\ga \gb, n}.
    \end{eqnarray*}
    Therefore,
    \begin{eqnarray*}
     W_{\ga \gb, n} 
     &=& (2 \mu - \hat \mu e^{-u}) W_{\ga \gb} -\hat \mu e^{-u}
     W_{n n}- (2 \mu - \hat \mu e^{-u})S_{\ga \gb}
      - \mu_{\tilde{\ga} \tilde{\gb}}+
      R_{n \gb \ga n} (-\mu + \hat \mu e^{-u})\\
    & &  + \hat \mu e^{-u} S_{nn} g_{\ga \gb} + S_{\ga \gb, n}.
    \end{eqnarray*}
   Now by (T1) and (T2), we    arrive at
   \begin{eqnarray*}
     W_{\ga \gb, n} &\leq& (2 \mu - \hat \mu e^{-u}) W_{\ga \gb} -\hat \mu e^{-u}
     W_{n n}+ \hat \mu e^{-u}S_{\ga \gb}  - R_{\gb n \ga n} \hat \mu e^{-u}
    + \hat \mu e^{-u} S_{nn} g_{\ga \gb}\\
    &\leq& (2 \mu - \hat \mu e^{-u}) W_{\ga \gb} -\hat \mu e^{-u}
     W_{n n},
   \end{eqnarray*}
   where the last inequality is by nonnegativity of $\hat \mu$.
  \epf
  We continue the proof of Theorem~\ref{t:bdy}.

 (1) We show that on the boundary $u_{nnn}$ can be  controlled from below
  by $\Delta u.$ More specifically, we have $u_{nnn} \geq -L \Delta u + 3 \mu
  u_{nn} - C$ for some number $L$ independent of points on the boundary.

 At a boundary point, differentiating the equation on both sides in the normal
 direction, we get
 $$ (f(x, u))_n = F^{\ga \gb} W_{\ga \gb,n} +
 F^{nn} W_{nn,n},$$ where we have used $F^{\alpha n} = 0$ by
 Lemma~\ref{l:S-bdy}.

  For case (a), by Lemma~\ref{l:S-bdy} again,
 $W_{\ga \gb, n} - 2 \mu W_{\ga \gb} \leq 0$. Thus,
  \begin{align}
   (f(x, u))_n &\leq  2 \mu F^{\ga \gb} W_{\ga \gb}
    + F^{nn} W_{nn,n} \notag \\
    &= 2 \mu F + F^{nn} (W_{nn,n} - 2 \mu W_{nn})
   = 2 \mu f(x,u) +  F^{nn} (W_{nn,n} - 2 \mu W_{nn}), \label{i:a}
  \end{align} where the first equality holds by Lemma~\ref{l:sym}
  (a).
 By (\ref{e:nga}) and the boundary condition,
  \begin{eqnarray*}
  W_{nn,n}- 2 \mu W_{nn} &=& u_{nnn} +2 u_n u_{nn} -
  u_l u_{ln} + S_{nn,n}-2\mu (u_{nn} +  u_n^2 - \frac{1}{2} |\gra u|^2 + S_{nn})\\
  &=&  u_{nnn} - 3 \mu u_{nn} + u_{\ga} \mu_{\ga}+ S_{nn,n} -\mu^3
  - 2 \mu S_{nn}.
  \end{eqnarray*}
 Returning to (\ref{i:a}), we use the conditions $|\gra_x f|\leq \Lambda f$ and $|f_z|\leq \Lambda f$
 to get
 $$  -Cf  \leq f_{x_n} + f_z u_n - 2 \mu f \leq  F^{nn} ( W_{nn,n}- 2 \mu W_{nn})
     \leq F^{nn} (u_{nnn} - 3 \mu u_{nn} + u_{\ga} \mu_{\ga}+ C).
 $$

  On the other hand, by condition (S3) we have
 $ F^{nn} \geq \epsilon \frac{F}{\gs_1} \geq  \epsilon\frac{f(x, u)}{\Delta u+ C} .$
 Hence, there is a positive number $L$ such that
 \beq \label{i:third}
 u_{nnn} \geq -L \Delta u + 3 \mu u_{nn} - u_{\ga} \mu_{\ga}- C
 \eeq is true for every point on the boundary, where $L$ and $C$
 depend on $n, \epsilon, \mu, c_{\sup}$ and $ \Lambda.$


 For case (b), by Lemma~\ref{l:S-bdy} (b) we get
  \begin{align}
   (f (x, u))_n &\leq \sum_{\ga, \gb} F^{\ga \gb} ( 2\mu W_{\ga
   \gb}- \hat \mu e^{-u} (W_{\ga \gb} + W_{nn} g_{\ga
   \gb}))  + F^{nn} W_{nn,n} \notag \\
    &= (2 \mu - \hat \mu e^{-u}) f(x,u) - \hat \mu e^{-u} \sum_{\ga}F^{\ga \ga} W_{nn}
     + F^{nn} (W_{nn,n} - (2 \mu - \hat \mu
     e^{-u})W_{nn}),\notag
  \end{align} where the  equality holds by Lemma~\ref{l:sym}
  (a). Using the conditions $|\gra_x f|\leq \Lambda f$ and $|f_z|\leq \Lambda f,$
   the above formula becomes
   $$
    -C f \leq f_{x_n} + f_z u_n -  (2 \mu - \hat \mu e^{-u}) f
   \leq- \hat \mu e^{-u} \sum_{\ga}F^{\ga \ga} W_{nn}
     + F^{nn} (W_{nn,n} - (2 \mu - \hat \mu
     e^{-u})W_{nn}),
  $$ where $C$ depends on $\inf u.$
  Since $\hat \mu$ is positive, if $W_{nn} \geq 0,$
  then
   $ -C f\leq F^{nn} (W_{nn,n} - (2 \mu - \hat \mu
   e^{-u})W_{nn}).$    
  On the other hand, if $W_{nn} < 0,$ by condition (A) we
  have $ -C f\leq F^{nn} (W_{nn,n} - (2 \mu + \rho \, \hat
  \mu e^{-u})W_{nn}),$ where we drop the term $ F^{nn} \hat \mu
     e^{-u} W_{nn}$ since it is negative.
  Hence, in both cases we obtain
   \beq \label{i:a'}
   -C f\leq F^{nn} (W_{nn,n} - 2 \mu W_{nn} +
   C |W_{nn}|).\eeq
  Now  by (\ref{e:nga}) and (\ref{e:ngagb}) and combined with  a basic fact
  that if $\Gamma^+_2 \subset \Gamma,$ then $|u_{ij}| \leq C \Delta u,$
   we get
 $$  W_{nn,n}- 2 \mu W_{nn} + C |W_{nn}|
     \leq u_{nnn} +(- 3 \mu + \hat \mu e^{-u}) u_{nn} + C \Delta u + C.
 $$
 Returning to (\ref{i:a'}), note that by condition (S3) we have
 $ F^{nn} \geq \epsilon \frac{F}{\gs_1} \geq  \epsilon\frac{f(x, u)}{\Delta u+ C}.$
 Hence, there is a positive number $L$ such that
 \beq \label{i:third'}
 u_{nnn} \geq -L \Delta u + (3 \mu - \hat \mu e^{-u}) u_{nn} - C
 \eeq is true for every point on the boundary, where $L$ and $C$
 depends on $n, \epsilon, \rho, \mu, \hat \mu, \inf u, c_{\sup}$ and $\Lambda.$
\vskip 1em
 (2) We will show that $\Delta u$ is bounded. The follow proof is for
 both cases (a) and (b), while the number $C$ is understood as a
 constant depending on $ n, r, \epsilon, \mu, c_{\sup}$ and
 $\Lambda$ for case (a), and $ n, r, \epsilon, \rho, \mu, \hat \mu, \inf u,
 c_{\sup}$ and $\Lambda$ for case (b), respectively.

Define $\mu$ on the half ball in  Fermi coordinates by $\mu
 (x',x_n) =\mu (x'),$ where $x'= (x_1, \cdots, x_{n-1}).$
Let $H = \eta ( \Delta u + |\gra u|^2 + n\mu \, u_n) e^{a\,x_n} =
\eta K e^{a\,x_n}$ where $a$ is some number chosen later. Denote
$r^2 :\equiv \sum_i x^2_i.$ Let $\eta(r)$ be a cutoff function
such that $0 \leq \eta \leq 1$, $\eta = 1$ in
$\overline{B}^+_{\frac{r}{2}}$ and $\eta = 0$ outside
$\overline{B}^+_r,$ and also $|\gra \eta|< C
\frac{{\eta}^{\frac{1}{2}}}{r}$ and $|\hess \eta|< \frac{C}{r^2}.$
By (\ref{i:gra}), $\Delta u$ is lower bounded. Without loss of
generality, we may assume $r= 1$ and
 $$K= \Delta u + |\gra u|^2 + n\mu \, u_n \gg 1.$$

 At a boundary point,  since $\eta = \eta (r),$ we have $\eta_n = 0.$
 Differentiating $H$ in the normal direction produces
 \begin{eqnarray*}
  H_n&=&\eta (K_n + a K) e^{a x_n}  
     = \eta ( u_{nnn} + u_{\ga \ga n} + (2 u_n + n \mu)u_{nn} +2 u_{\ga} u_{\ga n} + a K)
     e^{a x_n}.
  \end{eqnarray*}
 Using (\ref{e:nga}) and (\ref{e:ngagb}) gives
 \begin{eqnarray*}
  H_n &\geq& \eta ( u_{nnn} - \tilde{\Delta} \mu + (2 \mu- \hat \mu e^{-u}) u_{\ga \ga}
  +(- \mu+2 \hat \mu e^{-u}) u_{nn}+ (2 \mu -\hat \mu e^{-u})
  u_{\ga} u_{\ga}\\
  & &- (n-1) \mu_{\ga} u_{\ga}
  - \mu (n-1) (- \mu + \hat \mu e^{-u})^2- R_{nn} (- \mu + \hat \mu e^{-u}) + a K -C) e^{a x_n}\\
 &\geq& \eta (  u_{nnn}  - (n-1) \mu_{\ga} u_{\ga}+ (2 \mu- \mu e^{-u})  K + (- 3 \mu+ \mu e^{-u}) u_{nn}  + a
 K- C)e^{a x_n}.
\end{eqnarray*}

By (\ref{i:gra}) and the inequalities (\ref{i:third}) and
(\ref{i:third'}) for
 cases (a) and (b), respectively, we obtain
  \begin{eqnarray*}
 H_n &\geq& \eta ( - L \Delta u + (2 \mu - \hat \mu e^{-u}) K  - (n-1)\mu_{\ga} u_{\ga}
 -C  + a K)e^{a
 x_n}> 0
\end{eqnarray*}
for $a > L - 2\mu + \hat \mu \sup e^{-u} +1.$ Thus, $H$ increases
toward the interior and the maximum of $H$ must happen at some
point $x_0$ in the interior.


Now we know the maximal point $x_0$ is in the interior. Thus, at
$x_0$ we have
\begin{equation}\label{e:star'}
H_i = \eta_i (K e^{a x_n}) + \eta e^{a x_n}(K_i + a K \gd_{in})=0,
\end{equation}and
 $$ H_{ij} = \eta_{ij} ( K e^{a x_n}) + \eta_i ( K e^{a x_n})_j +
  \eta_j ( K e^{a x_n})_i + \eta  (K e^{a x_n})_{ij},$$
is negative semi-definite. Using (\ref{e:star'}), the above
formula becomes
$$ H_{ij} = (\eta_{ij}- 2 \eta^{-1} \eta_i \eta_j) K e^{a x_n} +
    \eta e^{a x_n}(K_{ij} + a K_i \gd_{jn} + a K_j \gd_{in} + a^2 K
    \gd_{in} \gd_{jn}).
 $$

Using the positivity of $F^{ij},$ and (\ref{e:star'}) to replace
$K_i$ and $K_j$, we get
 $$
 0 \geq F^{ij} H_{ij} e^{-a x_n} = F^{ij} ((\eta_{ij}- 2 \eta^{-1} \eta_i \eta_j) K +
    \eta (K_{ij} - a \frac{\eta_i}{\eta} K \gd_{jn} - a \frac{\eta_j}{\eta} K
    \gd_{in}- a^2 K \gd_{in} \gd_{jn})).
 $$
 Therefore,
  \beq \label{i:b}
     0 \geq   \eta F^{ij} K_{ij} -C \sum_i F^{ii} K,
 \eeq
 where we use conditions on $\eta.$

 By direct computations, we have
 $$
  F^{ij} K_{ij} = F^{ij}(u_{llij} +  2 u_{li}u_{lj} + 2 u_l u_{lij}
  + n \mu_{ij} u_n + n \mu_i u_{nj} + n \mu_j u_{ni}+ n \mu u_{nij}).$$
 Changing the order of the covariant differentiations and using
(\ref{i:gra}) give
  \begin{eqnarray*}
   F^{ij} K_{ij} &\geq& F^{ij}u_{ijll} + F^{ij} (2 u_{li}u_{lj} + 2 u_l
   u_{ijl}+ n \mu u_{ijn}) - C \sum_i F^{ii} (1 + |\hess u|)\\
    &=& I + II - C \sum_i F^{ii} (1 + |\hess u|).
  \end{eqnarray*}
 For I, notice that
 $$W_{ij,ll} = u_{ijll} + 2 u_{il}u_{jl} + u_i u_{jll} + u_j u_{ill} -
   (u_k u_{kll} + u^2_{kl})g_{ij} + S_{ij,ll}.$$
 Then
 $$
  I = F^{ij} (W_{ij,ll}- 2 u_{li}u_{lj}- 2 u_{ill} u_j +
  (u_{lk} ^2 + u_k u_{kll}) g_{ij} - S_{ij,ll}),$$
 where $F^{ij}(u_i u_{jll})= F^{ij}(u_j u_{ill})$ because $F^{ij}$ is symmetric.
 Changing the order of differentiations again yields
  $$
  I \geq F^{ij} W_{ij,ll}+ F^{ij}( - 2 u_{li}u_{lj}- 2 u_{lli} u_j +
  (u_{lk} ^2 + u_k u_{llk}) g_{ij})- C \sum_i F^{ii} (1 + |\hess
  u|).$$
 Now replace $u_{lli}$ and $u_{llk}$ by (\ref{e:star'})
 to get
  \begin{eqnarray*}
   I &\geq& F^{ij}W_{ij,ll} + F^{ij}( - 2 u_{li}u_{lj} -2 u_j (-2 u_l u_{li}
   -n \mu u_{ni}- n \mu_i u_n - \frac{\eta_i}{\eta} K - a K \gd_{in})\\
    & &  + (|\hess u|^2 + u_k (-2 u_l u_{lk}- n\mu u_{nk}- n \mu_k u_n - \frac{\eta_k}{\eta} K - a K \gd_{kn}))
   g_{ij})\\
   & & - C \sum_i F^{ii} (1 + |\hess
  u|).
  \end{eqnarray*}
 By (\ref{i:gra}) and the conditions on $\eta,$ we have
   \begin{eqnarray*}
   I &\geq& F^{ij}W_{ij,ll} + F^{ij}( - 2 u_{li}u_{lj} + 4 u_j u_l u_{li}
     +(|\hess u|^2 - 2 u_k u_l u_{lk}) g_{ij})\\
    & &- C \sum_i F^{ii}\eta^{- \frac{1}{2}}( 1 + |\hess u|^{\frac{3}{2}}).
  \end{eqnarray*}

 For II, we use the formula
 $$W_{ij,l} = u_{ijl} + u_i u_{jl}+ u_j u_{il}- u_k u_{kl} g_{ij} + S_{ij,l}$$
 to obtain
  \begin{eqnarray*}
  II &=& F^{ij} (2 u_{li}u_{lj}+ 2 u_l (W_{ij,l} - 2 u_i
   u_{jl} + u_k u_{kl} g_{ij}- S_{ij,l})\\
    & &+ n \mu (W_{ij,n}
   -2 u_i u_{jn} + u_k u_{kn} g_{ij}- S_{ij,n}))\\
   &\geq& F^{ij} (2 u_{li}u_{lj}+ 2 u_l W_{ij,l} - 4 u_i
   u_{jl} u_j + 2 u_k u_{kl} u_l g_{ij}+ n \mu
   W_{ij,n}) \\
   & &- C \sum_i F^{ii} (1+ |\hess u|^{\frac{3}{2}}).
  \end{eqnarray*}

 Combining I and II together, we find that
   \begin{eqnarray*}
    F^{ij} K_{ij}    &\geq& F^{ij}W_{ij,ll} + F^{ij}( - 2 u_{li}u_{lj} + 4 u_j u_l u_{li}
     +(|\hess u|^2 - 2 u_k u_l u_{lk}) g_{ij})\\
    & &+ F^{ij} (2 u_{li}u_{lj}+ 2 u_l W_{ij,l} - 4 u_i
   u_{jl} u_j + 2 u_k u_{kl} u_l g_{ij}+ n \mu
   W_{ij,n})\\
  & &  - C \sum_i F^{ii} \eta^{-\frac{1}{2}}( 1 + |\hess u|^{\frac{3}{2}}).
  \end{eqnarray*}
 Here is the key step of the proof. Three terms from I cancel out
 three terms from II. Thus, after the cancellations we arrive at
    \begin{eqnarray*}
  F^{ij} K_{ij} &\geq& F^{ij}W_{ij,ll} + F^{ij}|\hess u|^2  g_{ij}
    + F^{ij} ( 2 u_l W_{ij,l}+ n \mu W_{ij,n})\\
  & & - C \sum_i F^{ii} \eta^{-\frac{1}{2}}( 1 + |\hess u|^{\frac{3}{2}}).
  \end{eqnarray*}
Now returning to (\ref{i:b}),  applying $\eta$ on both sides
produces
 \begin{eqnarray*}
 0 &\geq& \eta^2 F^{ij}W_{ij,ll} + \eta^2 F^{ij}|\hess u|^2  g_{ij}
    + \eta^2 F^{ij} ( 2 u_l W_{ij,l}+ n \mu W_{ij,n})\\
   & & - C \sum_i F^{ii} ( 1 + \eta^{\frac{3}{2}} |\hess u|^{\frac{3}{2}}).
  \end{eqnarray*}
 By the concavity of $F,$ we have
 $F^{ij} W_{ij,ll} \geq (f(x,u))_{ll}.$ Hence,
  \begin{eqnarray*}
 0 &\geq&  \eta^2 \sum_i F^{ii}|\hess u|^2 + \eta^2 (f(x,u))_{ll}
    + 2 \eta^2 u_l (f (x,u))_l + n \mu \eta^2 (f(x,u))_n\\
  & &  - C \sum_i F^{ii} ( 1 + \eta^{\frac{3}{2}} |\hess u|^{\frac{3}{2}})\\
   &\geq& \sum_i F^{ii} ( \eta^2 |\hess u|^2   - C - C \eta |\hess u|- C \eta^{\frac{3}{2}} |\hess
   u|^{\frac{3}{2}}).
  \end{eqnarray*}
 This gives $(\eta |\hess u|)(x_0) \leq C.$ Hence, for $x \in
\overline{B}^+_{\frac{r}{2}},$ we have that $H = ( \Delta u +
|\gra u|^2 + n \mu \, u_n) e^{a\,x_n}$ is bounded. Thus, $\Delta
u$ is bounded. By (\ref{i:gra}), $|\gra u|$ is also bounded.
\vskip 1em

 (3) To get the Hessian bounds, for case (b) it
 follows immediately by the fact that if $\Gamma^+_2 \subset \Gamma,$
 then $|u_{ij}| \leq C \Delta u.$
 As for case (a), note that from (2) above, we have $\eta \Delta u <
 C$ and $\eta |\gra u|^2 < C.$ Consider the maximum of $\eta (\hess u + du \otimes du
 + \mu u_n g) e^{a x_n}$ over the set $(x, \xi) \in (B_1^+, \mathbb{S}^n).$
 We will show that at the maximum,  $x$ can not belong to the
 boundary. If $\xi$ is in the tangential direction, without loss
 of generality, we can assume $\xi$ is in $e_1$ direction. By formulas (\ref{e:nga})
 and (\ref{e:ngagb}), we obtain
 \begin{eqnarray*}
 & &(\eta(u_{11} + u_1^2 + \mu u_n) e^{a x_n})_n  \\
    &=& \eta e^{a x_n} ( (2 \mu + a) (u_{11} + u_1^2 + \mu u_n) +
    \mu^3- \mu_{\tilde{1} \tilde{1}}- \mu_{\ga}u_{\ga}- R_{n 1 1 n} \mu)\\
    &\geq& \eta e^{a x_n} ( (2 \mu + a) (u_{11} + u_1^2 + \mu u_n) - \mu_{\ga}u_{\ga} -C )> 0
 \end{eqnarray*} for $a > -2\mu + 1.$ If $\xi$ is in the normal
 direction, we first have that $\Delta u \leq n(u_{nn} + \mu^2) \leq n u_{nn} + C.$
 By (\ref{i:third}) and (\ref{i:gra}), we obtain
  \begin{eqnarray*}
  (\eta (u_{nn} + u_n^2 + \mu u_n) e^{a x_n})_n &=& \eta (u_{nnn} - \mu u_{nn} +
   au_{nn}) e^{a x_n}\\
  &\geq& \eta e^{a x_n} (-L \Delta u + 2 \mu u_{nn} + a u_{nn}- C_0 \Delta u- C)\\
  &\geq&  \eta e^{a x_n} (-n (L+C_0) u_{nn} + 2 \mu u_{nn} + a u_{nn}-
  C)> 0
  \end{eqnarray*} for $a> n(L+C_0)- 2\mu + 1.$ Thus, we  conclude that at the
  maximum, $x$ must be in the interior. We then perform
  similar computations as before using the inequality $\eta |\gra u|^2 < C$
  to get the Hessian bounds. We omit the details here.
 \epf

 \bpf[Proof of Corollary~\ref{c:lcfbdy}]
 It has been proved in Section~\ref{s:backgd} that $A_g$ satisfies
 (T0)-(T2). We only need to verify the dependence of $\Lambda$
 and $C_{\sup}$ in Theorem~\ref{t:bdy}.

 Let $\tilde{f}(x, z) = f(x) e^{-2z}$ and $\Lambda = \frac{\|f\|_{C^1}}{\inf f} + 2.$
 Then $$\begin{array} {ll}
       |\gra_x \tilde{f}|\leq |\gra f| e^{-2z} \leq \Lambda (f
      e^{-2z})= \Lambda \tilde{f}
      &\text{and} \quad |\tilde{f}_z|= 2 f e^{-2z}\leq \Lambda
      \tilde{f}.
      \end{array} $$
 For $c_{\sup},$ it is easy to see that $c_{\sup}\leq C \|f\|_{C^2} \sup e^{-2u}
 = C(\|f\|_{C^2}, \inf u).$
 \epf

 \section{Proof of Theorem~\ref{t:bdy1}} \label{s:bdy1}

In this section, we prove Theorem~\ref{t:bdy1}.

 \begin{proof}

 (a) Let $\hat A = \hess u + du \otimes du - \frac{1}{2} |\gra u|^2 g +
 A_g$ and $W = \hat A + S.$
   Recall that $T_1(W) = (tr_g W )g - W$ is
   the first Newton tensor and $F^{ij} = \frac{1}{2 F}
   (T_1)_{ij},$ where $F = \gs^{\frac{1}{2}}_2.$
   Since $F^{ij}$ is positive,  we have
   $$T_1(W)_{nn} =  u_{\ga
 \ga} -\frac{n-3}{2} |\gra u|^2 - u_n^2 + T_1(A)_{nn}+ tr S - S_{nn} >0.$$
 Thus,
 \beq \label{i:grad2}
 |\gra u|^2< C(1 + u_{\ga \ga}).
 \eeq

 We will show that $u_{\ga \ga}$ and hence $|\gra u|^2$ are
 bounded. Define $\mu$ on the half ball in  Fermi coordinates by $\mu
 (x',x_n) =\mu (x'),$ where $x'= (x_1, \cdots, x_{n-1}).$
 Let $G= \eta (u_{\ga \ga}+
 u_{\ga} u_{\ga} + (n-1)\mu u_n) e^{a x_n}= \eta E e^{a x_n},$ where $a$ is some
 number chosen later. Denote $r^2 :\equiv \sum_i
x^2_i.$ Let $\eta(r)$ be a cutoff function as in the proof of Theorem~\ref{t:bdy} (2).
 Without loss of generality, we may assume $r = 1$ and
 $$E= u_{\ga \ga}+ u_{\ga} u_{\ga} + (n-1)\mu u_n \gg 1.$$
 Therefore,  by (\ref{i:grad2}) we get $u_{\ga \ga}\gg 1$.
 Hence, we also have  $u_{\ga} u_{\ga} < E$ on the
 boundary.

 At a boundary point,  since $\eta = \eta (r),$ we have $\eta_n = 0.$
 Differentiating $G$ in the normal direction produces
 \begin{eqnarray*}
  G_n=\eta (E_n + a E) e^{a x_n}  
     = \eta ( u_{\ga \ga n} +  2 u_{\ga} u_{\ga n} + (n-1) \mu u_{nn}+ a E) e^{a
     x_n}.
  \end{eqnarray*}
 Using (\ref{e:nga}) and (\ref{e:ngagb}) gives
 \begin{eqnarray*}
 G_n &=& \eta ( 2 \mu u_{\ga \ga}+ 2 \mu u_{\ga} u_{\ga}-(n-1) \mu^3
 - \tilde{\Delta} \mu -(n-1) \mu_{\ga} u_{\ga} + \mu R_{nn}+ a E) e^{a x_n}\\
 &\geq& \eta ( 2 \mu E - (n-1) \mu_{\ga} u_{\ga}+ a E- C)e^{a x_n}.
\end{eqnarray*}
 By (\ref{i:grad2}), we obtain
  \begin{eqnarray*}
 G_n &\geq& \eta ((2 \mu + a) E  - (n-1) \mu_{\ga} u_{\ga} -C)e^{a
 x_n}> 0
\end{eqnarray*}
for $a >  - 2\mu  +1.$ Hence, the maximum of $G$ must happen  in the
interior.

Now we know the maximal point $x_0$ is in the interior. Thus, at
$x_0$ we have
\begin{equation}\label{e:star}
G_i = \eta_i (E e^{a x_n}) + \eta e^{a x_n}(E_i + a E \gd_{in})=0,
\end{equation}and
$$ G_{ij} = \eta_{ij} ( E e^{a x_n}) + \eta_i ( E e^{a x_n})_j +
  \eta_j ( E e^{a x_n})_i + \eta  (E e^{a x_n})_{ij}$$
is negative semi-definite. Using (\ref{e:star}), the above formula
becomes
 $$ G_{ij} = (\eta_{ij}- 2 \eta^{-1} \eta_i \eta_j) E e^{a x_n} +
    \eta e^{a x_n}(E_{ij} + a E_i \gd_{jn} + a E_j \gd_{in} + a^2
    E \gd_{in} \gd_{jn}).
 $$
 Moreover, direct computations show
 \begin{eqnarray*}
 E_{ij} &=& u_{\ga \ga ij}+ 2 u_{\ga i}u_{\ga j} + 2 u_{\ga}
u_{\ga ij} + (n-1) \mu_{ij} u_n + (n-1) \mu_i u_{nj} \\
 & &+ (n-1)
\mu_j u_{ni}+ (n-1) \mu u_{nij}.
 \end{eqnarray*}
 Using the positivity of $F^{ij},$ and (\ref{e:star}) to
replace $E_i$ and $E_j$, we get
 \begin{align}
 0 \geq F^{ij} G_{ij} e^{-a x_n} &= F^{ij} ((\eta_{ij}- 2 \eta^{-1} \eta_i \eta_j) E +
    \eta (E_{ij} - a \eta^{-1} \eta_i E \gd_{jn} - a \eta^{-1} \eta_j E
    \gd_{in} \notag \\
    & - a^2 E \gd_{in} \gd_{jn}))
      \geq   \eta F^{ij} E_{ij} -C \sum_i F^{ii} E, \label{i:b2}
 \end{align}
 where we use conditions on $\eta$ in the inequality.

 To compute $F^{ij} E_{ij},$  using the formulas for exchanging
 the order of differentiations the first term in $E_{ij}$ becomes
  \begin{eqnarray*}
  F^{ij} u_{\ga \ga ij} &=& F^{ij} (u_{ij \ga \ga}- R_{m \ga i \ga} u_{mj}
   - R_{mij \ga} u_{m \ga} -R_{m \ga j \ga} u_{m i} + R_{mi \ga j} u_{m
   \ga}\\
   & & - R_{m \ga i \ga,j} u_m + R_{m i \ga j,\ga} u_m)\\
   &\geq& F^{ij} u_{ij \ga \ga} - C \sum_i |F^{n i} u_{nn}| - C
   \sum_i F^{ii} (1 + |\gra u| + \sum_{\ga, \gb} |u_{\ga \gb}|+ \sum_{\ga} |u_{n
   \ga}|).
  \end{eqnarray*}
 Therefore,
  \begin{eqnarray*}
  F^{ij} E_{ij} &\geq& F^{ij}(u_{ij \ga \ga}+ 2 u_{\ga i}u_{\ga j} + 2 u_{\ga} u_{ij\ga}
   + (n-1) \mu u_{ijn}) \\
    & &- C \sum_i |F^{in} u_{nn}|
   - C \sum_i F^{ii} (1 + \sum_{\ga, \gb} |u_{\ga \gb}|+ \sum_{\ga} |u_{n
   \ga}|),
  \end{eqnarray*}
 where we use (\ref{i:grad2}).  Denote
 $I =  F^{ij}u_{ij \ga \ga}$ and
 $II = F^{ij}( 2 u_{\ga i}u_{\ga j} + 2 u_{\ga} u_{ij \ga} + (n-1) \mu u_{ijn}).$

  For I, notice that
 $$W_{ij,\ga \ga} = u_{ij \ga \ga} + 2 u_{i \ga}u_{j \ga} + u_i u_{j \ga \ga} +
  u_j u_{i \ga \ga} - (u_k u_{k \ga \ga} + u^2_{k \ga})g_{ij} + A_{ij, \ga \ga}+ S_{ij, \ga \ga}.$$
 Then
 $$
  I \geq F^{ij} (W_{ij, \ga \ga}- 2 u_{\ga i}u_{\ga j}- 2 u_{i \ga \ga} u_j +
  (u_{k \ga} ^2 + u_k u_{k \ga \ga}) g_{ij}) - C \sum_i F^{ii}.$$
  Exchanging the order of differentiations, the above formula
 becomes
  \begin{eqnarray*}
  I \geq F^{ij} (W_{ij, \ga \ga}- 2 u_{\ga i}u_{\ga j}- 2 u_{\ga \ga i} u_j +
  (u_{k \ga} ^2 + u_k u_{\ga \ga k}) g_{ij}) - C \sum_i F^{ii} (1 + |\gra u|^2),
  \end{eqnarray*}
  where we use (\ref{i:grad2}).
 Now using (\ref{e:star}) to replace $u_{\ga \ga i}$ and $u_{\ga \ga k}$
 yields
  \begin{eqnarray*}
   I &\geq& F^{ij} W_{ij,\ga \ga} + F^{ij}( - 2 u_{\ga i}u_{\ga j} -2 u_j (-2 u_{\ga}
   u_{\ga i}  -(n-1) \mu_i u_n -(n-1) \mu u_{ni} \\
   & &- \eta^{-1} \eta_i E - a E \gd_{in})
     + (u_k (-2 u_{\ga} u_{\ga k}- (n-1)\mu_k u_n- (n-1)\mu u_{nk} - \eta^{-1} \eta_k
     E\\
    & &- a E \gd_{kn})
   + u_{k \ga}^2) g_{ij}) - C \sum_i F^{ii} (1 + |\gra u|^2).
  \end{eqnarray*}
  Noting that $E < C (\sum_{\ga} u_{\ga \ga} + 1).$
 By (\ref{i:grad2}) and the conditions on $\eta,$ we arrive at
   \begin{eqnarray*}
   I &\geq& F^{ij} W_{ij,\ga \ga} + F^{ij}( - 2 u_{\ga i}u_{\ga j} + 4 u_j u_{\ga} u_{\ga i}
     +2 (n-1)\mu u_j u_{ni}+ ( - 2 u_k u_{\ga} u_{\ga k} \\
     & &- (n-1) \mu u_k u_{nk} + u_{k \ga}^2) g_{ij})
    - C \sum_i F^{ii}( 1 + |\gra u|^2 + \eta^{- \frac{1}{2}}(\sum_{\ga} u_{\ga \ga})^{\frac{3}{2}}).
  \end{eqnarray*}

  For II, we use the formula
 $$W_{ij,l} = u_{ijl} + u_i u_{jl}+ u_j u_{il}- u_k u_{kl} g_{ij}+ A_{ij,l}+ S_{ij, l}$$
 to obtain
  \begin{eqnarray*}
  II &=& F^{ij} (2 u_{\ga i}u_{\ga j}+ 2 u_{\ga} (W_{ij,\ga} - 2 u_i
   u_{j \ga} + u_k u_{k \ga} g_{ij} - A_{ij,\ga}- S_{ij,\ga})\\
    & &+ (n-1) \mu (W_{ij,n}
   -2 u_j u_{ni} + u_k u_{kn} g_{ij} - A_{ij,n}- S_{ij,n}))\\
   &\geq& F^{ij} (2 u_{\ga i}u_{\ga j}+ 2 u_{\ga} W_{ij,\ga} - 4 u_i
   u_{j \ga} u_{\ga} + 2 u_k u_{k \ga} u_{\ga} g_{ij}+ (n-1) \mu W_{ij,n} \\
   & &- 2 (n-1) \mu u_{ni} u_j + (n-1) \mu u_k u_{kn} g_{ij}) - C \sum_i F^{ii}(1 + |\gra u|).
  \end{eqnarray*}

 Combining I and II together, we find that
   \begin{eqnarray*}
  I + II &\geq& F^{ij}W_{ij,\ga \ga} + F^{ij}( - 2 u_{\ga i}u_{\ga j} + 4 u_j u_{\ga} u_{\ga i}
     +2 (n-1)\mu u_i u_{nj}+ ( - 2 u_k u_{\ga} u_{\ga k} \\
     & &- (n-1) \mu u_k u_{nk} + u_{k \ga}^2) g_{ij})
    + F^{ij} (2 u_{\ga i}u_{\ga j}+ 2 u_{\ga} W_{ij,\ga} - 4 u_i
   u_{j \ga} u_{\ga} \\
   & &+ 2 u_k u_{k \ga} u_{\ga} g_{ij}+ (n-1) \mu W_{ij,n}
    - 2 (n-1) \mu u_{jn} u_i + (n-1) \mu u_k u_{kn} g_{ij})\\
  & & - C \sum_i F^{ii}( 1 + |\gra u|^2 + \eta^{- \frac{1}{2}}(\sum_{\ga} u_{\ga \ga})^{\frac{3}{2}}).
  \end{eqnarray*}
  Five terms from I cancel out
 five terms from II. Thus, after the cancellations 
  \begin{align}
  I + II &\geq F^{ij}W_{ij,\ga \ga} + F^{ij} ( u^2_{k \ga} g_{ij}+
  2 u_{\ga} W_{ij,\ga}+  (n-1) \mu W_{ij,n} ) \notag \\
  &  - C \sum_i F^{ii}( 1 + |\gra u|^2 + \eta^{- \frac{1}{2}}(\sum_{\ga} u_{\ga \ga})^{\frac{3}{2}}).
  \label{i:ii2}
  \end{align}

 Now returning to (\ref{i:b2}),  applying $\eta$ on both sides
produces
 $$
  0 \geq \eta^2 (I + II)
  - C \eta^2 \sum_i |F^{in} u_{nn}|
   - C \eta^2 \sum_i F^{ii} (1 + \sum_{\ga, \gb} |u_{\ga \gb}|+ \sum_{\ga} |u_{n
   \ga}|)-C \eta \sum_i F^{ii}  E.
  $$
 By (\ref{i:ii2}), the above formula becomes
 \begin{eqnarray*}
  0 &\geq& \eta^2 F^{ij}W_{ij,\ga \ga} + \eta^2 F^{ij}u_{k \ga}^2  g_{ij}
    + \eta^2 F^{ij} ( 2 u_{\ga} W_{ij,\ga}+ (n-1) \mu W_{ij,n}) \\
   & & - C \eta^2 \sum_i |F^{in} u_{nn}|
   - C  \sum_i F^{ii} (1 + \eta \sum_{\ga, \gb} |u_{\ga \gb}|+ \eta \sum_{\ga} |u_{n
   \ga}|+ \eta^{ \frac{3}{2}}(\sum_{\ga} u_{\ga \ga})^{\frac{3}{2}}),
  \end{eqnarray*}
 where we have used the fact that $E \leq C (\sum_{\ga} u_{\ga \ga} + 1)$ and
 (\ref{i:grad2}).
   By the concavity of $F$, we have $F^{ij} W_{ij,\ga \ga} \geq (f(x,u))_{\ga \ga}.$ Hence,
  \begin{eqnarray*}
 0 &\geq&  \eta^2 F^{ij}u_{k \ga}^2  g_{ij} + \eta^2 (f(x,u))_{\ga \ga}
  + 2 \eta^2 u_{\ga} (f(x,u))_{\ga} + (n-1) \mu \eta^2 (f(x, u))_n \\
  & & - C \eta^2 \sum_i |F^{in} u_{nn}|
   - C  \sum_i F^{ii} (1 + \eta \sum_{\ga, \gb} |u_{\ga \gb}|+ \eta \sum_{\ga} |u_{n
   \ga}|+ \eta^{ \frac{3}{2}}(\sum_{\ga} u_{\ga
   \ga})^{\frac{3}{2}}).
  \end{eqnarray*} Therefore,
  \begin{align}
   0 &\geq \eta^2 \sum_i F^{ii} u_{k \ga}^2 - C \sum_i F^{ii}(1 + \eta \sum_{\ga, \gb}
    |u_{\ga \gb}|+ \eta \sum_{\ga} |u_{n
   \ga}|+ \eta^{ \frac{3}{2}}(\sum_{\ga} u_{\ga
   \ga})^{\frac{3}{2}})\notag\\
   &  - C \eta^2 \sum_i |F^{in} u_{nn}|, \label{i:a1}
  \end{align}
  where  we use Lemma~\ref{l:sym} (b).

 The term $|F^{in}u_{nn}|$ can be estimated as follows. Note
 that
 $$|F^{in}u_{nn}| \leq |F^{in} W_{nn}| + C \sum_i F^{ii} (1 + |\gra
 u|^2).$$
 Since $W \in \Gamma^+_2$, a basic algebraic fact says that
 $- \frac{n-2}{n} \gs_1 \leq \gl_i \leq \gs_1,$ where $\gl_i$'s are
 the eigenvalues of $W.$ Therefore,
 $|W_{nn}| \leq C \sum_i W_{ii}$. Recall $F^{ij} = \frac{1}{2 F}  T_{ij}.$
  Hence, we have
  $$
  |F^{i n} W_{nn}| \leq C |F^{i n} \sum_j W_{jj}| \leq C | T_1(W)_{n
  i}|\sum_j  F^{jj}.  $$
  Consequently,
  $$
  |F^{\ga n}u_{nn}| \leq C |T_1(W)_{n \ga}|\sum_j  F^{jj}+ C \sum_i F^{ii} (1 + |\gra
 u|^2) \leq C \sum F^{ii} (1 + |\gra u|^2 + \sum_{\gb} |u_{n
 \gb}|), $$ and
  $$
   |F^{n n}u_{nn}| \leq C |T_1(W)_{n n}|\sum_j  F^{jj}+ C \sum_i F^{ii} (1 + |\gra
 u|^2) \leq C \sum F^{ii} (1 + \sum_{\gb} |u_{\ga
 \gb}|).
   $$

 Returning to (\ref{i:a1}),  we obtain
 \begin{eqnarray*}
  0    & \geq&  \sum_i F^{ii} (\eta^2 \sum_{\ga, \gb}u_{\ga \gb}^2 - C (1 + \eta \sum_{\ga, \gb}
    |u_{\ga \gb}|+ \eta^{ \frac{3}{2}}(\sum_{\ga} u_{\ga
   \ga})^{\frac{3}{2}}))
  \end{eqnarray*}
 This gives $(\eta |u_{\ga \gb}|)(x_0) \leq C.$ Thus, for $x \in
\overline{B}^+_{\frac{r}{2}},$ we have that $G = ( u_{\ga \ga} +
u_{\ga} u_{\ga} + (n-1) \mu \, u_n) e^{a\,x_n}$ is bounded. As a
result, $\sum_{\ga} u_{\ga \ga}- u_n^2$ is upper bounded. On the
other hand, since $T_1(W)_{nn}$ is positive,
 $\sum_{\ga} u_{\ga \ga}- u_n^2 > \frac{n-3}{2} |\gra u|^2 - C.$
 Hence, $|\gra u|$ is  bounded. Consequently, $\sum_{\ga} u_{\ga \ga}$
 is also bounded.

(b)
 Let $\hat A^t = \hat A + \frac{1-t}{2} (tr_g \hat A) g =
 \hess u + du \otimes du - \frac{1}{2} |\gra u|^2 g + \frac{1-t}{2}
 (\Delta u - \frac{n-2}{2} |\gra u|^2)g + A^t,$ where $ - \Theta \leq t \leq 1.$
 Let $W= \hat A^t + S.$
 The condition $W \in \Gamma^+_1$ gives
 $$0 < tr_g W = (3-2t) tr_g \hat A + tr_g S = (3-2t) (\Delta u - \frac{n-2}{2} |\gra u|^2 + A_g)+ tr S.$$
 Therefore, we have
 \beq \label{i:grad3}
 |\gra u|^2 < C (\Delta u + 1).
 \eeq
 In the following proof, we adopt the notation $F^{ij} = \frac{\de F(W)}{\de W_{ij}},$
 where $F = \gs_2^{\frac{1}{2}}.$

 (1) We show that on the boundary $u_{nnn}$ can be  controlled from below
  by $\Delta u.$ More specifically, we have $u_{nnn} \geq -L \Delta u
   - C$ for some number $L$ independent of points on the boundary.

  At a boundary point, note that $T_1(W)_{\ga n} = - W_{\ga n} =
  - \hat A_{\ga n}- S_{\ga n} = 0$ by (\ref{e:nga}), Lemma~\ref{l:bdy} (a) and the assumption on $S$.
   Therefore,
   $F^{\ga n}= \frac{T_1(W)_{\ga n}}{2 F} = 0.$
 Differentiating the equation on both sides in the normal direction
 at a boundary point, we get
 \begin{eqnarray*}
  (f(x, u))_n &=&  F^{\ga \gb} W_{\ga \gb,n} + F^{nn}
   W_{nn,n}\\
  &=& F^{\ga \gb} (W_{\ga \gb, n} - 2 \mu W_{\ga \gb})
   + 2 \mu f(x, u) + F^{nn} (W_{nn, n} - 2 \mu W_{nn})\\
  &=& F^{\ga \gb} (\hat A_{\ga \gb, n} - 2 \mu \hat A_{\ga \gb}
  + S_{\ga \gb, n}- 2 \mu S_{\ga \gb})
  + \frac{1-t}{2} \sum_i F^{ii} (g^{jk} \hat A_{jk,n}- 2 \mu g^{jk} \hat
   A_{ik})\\
   & & +F^{nn} (\hat A_{nn, n} - 2 \mu \hat
   A_{nn}+ S_{nn, n} - 2 \mu S_{nn})
   + 2 \mu f(x, u),
 \end{eqnarray*} where in the second equality we use Lemma~\ref{l:sym}
  (a). Using Lemma~\ref{l:bdy1} and the assumption on $S$,
  we have $g^{\ga \gb}(\hat{A}_{\ga \gb, n} -2 \mu \hat{A}_{\ga \gb})=0$ and
  $g^{\ga \gb} (S_{\ga \gb, n} - 2 \mu S_{\ga \gb}) \leq 0$. Therefore,
   \begin{eqnarray*}
  - C &\leq& (f(x, u))_n - 2 \mu f(x, u)
  = - \frac{W_{\ga \gb}}{2 F} (\hat A_{\ga \gb, n} - 2 \mu \hat A_{\ga \gb}
    + S_{\ga \gb, n} - 2 \mu S_{\ga \gb})\\
  & & + \frac{1-t}{2} \sum_i F^{ii} (\hat A_{nn,n}- 2 \mu \hat
   A_{nn})
   + F^{nn} (\hat A_{nn, n} - 2 \mu \hat
   A_{nn} + S_{nn,n}- 2 \mu S_{nn}).
 \end{eqnarray*}
  By (\ref{e:nga}) and (\ref{e:ngagb}), we can compute directly
  that
   $ \hat A_{\ga \gb, n} - 2 \mu \hat A_{\ga \gb}= -2 \mu A_{\ga \gb}
   + \mu_{\tilde{\ga} \tilde{\gb}} -\mu R_{n \gb \ga n} + A_{\ga \gb,
   n}.$
 Hence, $\hat A_{\ga \gb, n} - 2 \mu \hat A_{\ga \gb} + S_{\ga \gb, n} - 2 \mu S_{\ga \gb}$
 is bounded. Thus,
 \beq \label{i:b1}
 - C \leq  \sum_{\ga, \gb} \frac{C}{ F} |W_{\ga \gb}|
    + F^{nn} (\hat A_{nn, n} - 2 \mu \hat
   A_{nn}+ C) + \frac{1-t}{2} \sum_i F^{ii} (\hat A_{nn,n}- 2 \mu \hat
   A_{nn}).
 \eeq

 On the other hand,
 $$
 0< f(x, u)^2 =  T_1(W)^{\ga \gb} W_{\ga \gb} +
 T_1(W)^{nn} W_{nn}
   = - \sum_{\ga, \gb}(W_{\ga \gb})^2 + T_1(W)_{nn}( tr_g W +
   W_{nn}).
 $$ Using the above formula, (\ref{i:b1}) becomes
  $$
   - C  \leq F^{nn}( tr_g W + W_{nn}+ \hat A_{nn, n} - 2 \mu \hat
   A_{nn}+ C) + \frac{1-t}{2} \sum_i F^{ii} (\hat A_{nn,n}- 2 \mu \hat
   A_{nn}).$$
  Hence,
  \beq  \label{i:a2}
    - C \leq F^{nn}(\hat A_{nn, n} + (1- 2 \mu) \hat A_{nn}
    + \frac{7-5t}{2} tr_g \hat A + C) + \frac{1-t}{2} \sum_i F^{ii} (\hat A_{nn,n}- 2 \mu \hat
   A_{nn}).
  \eeq
  Since $W \in \Gamma^+_2,$ we have $|W_{ij}|< C
  tr_g W.$ This gives $|\hat A_{ij}| < C tr_g \hat A + C,$ and
  $|u_{ij}| < C \Delta u + C$ by (\ref{i:grad3}). We also
  get  that at a boundary point,
  $$ \hat A_{nn,n} = u_{nnn} - \mu u_{nn} + \mu_{\ga} u_{\ga}- \mu u_{\ga} u_{\ga}+ A_{nn,n}$$
  by (\ref{e:nga}).
  Hence, returning to (\ref{i:a2}) we obtain
  $$- C \leq  (F^{nn}+ \frac{1-t}{2} \sum_i F^{ii})(\hat A_{nn, n} + C tr_g \hat A + C)
  \leq (F^{nn}+ \frac{1-t}{2} \sum_i F^{ii})(u_{nnn} + C \Delta u + C).$$
  Finally, since $F= \gs_2^{\frac{1}{2}}$ satisfies (S3),
  we have that
 $ F^{ij} \geq C \frac{F}{tr_g W}g^{ij}= C \frac{F}{(3-2t) tr_g \hat A+ tr_g S} g^{ij}
 \geq \frac{C}{\Delta u + C}g^{ij}.$
 Thus, there is a positive number $L$ such that
 \beq \label{i:third1}
 u_{nnn} \geq -L \Delta u  - C
 \eeq  for every point on the boundary, where $L$ and $C$
 depend on $n, \|\mu\|_{C^2},$ $c_{\sup}$ and $c_{\inf}.$

(2) We will show that $\Delta u$ is bounded. Let $H = \eta (
\Delta u + |\gra u|^2) e^{a x_n} = \eta K e^{a x_n},$ where $a$ is
some number chosen later. Let $\eta(r)$ be a cutoff function as in
(a). Without lost of generality, we may assume $r=1$ and $K=
\Delta u + |\gra u|^2 \gg 1.$
 As a consequence, by (\ref{i:grad3}) we get $\Delta u \gg 1$.

 At a boundary point,   differentiating $H$ in the normal direction produces
 \begin{eqnarray*}
  H_n&=& \eta (K_n + a K) e^{a x_n}  
     = \eta ( u_{nnn} + u_{\ga \ga n} + 2 u_n u_{nn} + 2 u_{\ga} u_{\ga n} + a K) e^{a
     x_n}.
  \end{eqnarray*}
 Using (\ref{e:nga}) and (\ref{e:ngagb}) gives
 \begin{eqnarray*}
 H_n &=& \eta ( u_{nnn} + 2 \mu u_{\ga \ga} - (n+1)\mu u_{nn} -(n-1) \mu^3 +
 2 \mu u_{\ga} u_{\ga } - \widetilde{\Delta} \mu + (n-3)
 u_{\ga}\mu_{\ga}\\
 & &+ \mu R_{nn}+ a K) e^{a x_n}\\
 &\geq& \eta (  u_{nnn} + 2 \mu  K   + a
 K -(n+3) \mu u_{nn} +(n-3) u_{\ga} \mu_{\ga}- C)e^{a x_n}.
\end{eqnarray*}
 Note that $|u_{ij}| < C (\Delta u + 1).$
 Then by (\ref{i:grad3}) and  (\ref{i:third1}), we obtain
  \begin{eqnarray*}
 H_n &\geq& \eta ( - (L+ C_0) \Delta u + (2 \mu + a) K  -C)e^{a
 x_n}> 0
\end{eqnarray*}
for $a > L - 2 \mu + C_0 +1.$ Thus, the maximum of $H$ must happen  in
the interior.

 The rest of proof is similar to that of Theorem~\ref{t:bdy};
 to be precise, formula (\ref{e:star'}) and below. Since the proof
 is almost the same, we just sketch here.

 At the maximal point $x_0,$ we have
\begin{equation}\label{e:star''}
H_i = \eta_i (K e^{a x_n}) + \eta e^{a x_n}(K_i + a K \gd_{in})=0,
\end{equation}and
 \begin{eqnarray*}
 H_{ij}   &=& (\eta_{ij}- 2 \eta^{-1} \eta_i \eta_j) K e^{a x_n} +
    \eta e^{a x_n}(K_{ij} + a K_i \gd_{jn} + a K_j \gd_{in} + a^2 K
    \gd_{in} \gd_{jn})
 \end{eqnarray*}
  is negative semi-definite.

Using the positivity of $F^{ij},$ and (\ref{e:star''}) to replace
$K_i$ and $K_j$, we get
 \begin{align}
 0 \geq F^{ij} H_{ij} e^{-a x_n} 
     &\geq  \eta F^{ij} K_{ij} -C \sum_i F^{ii} K. \label{i:b'}
 \end{align}
 By direct computations, we have
  $$
  F^{ij} K_{ij} = F^{ij}(u_{llij} +  2 u_{li}u_{lj} + 2 u_l
  u_{lij})
   \geq F^{ij}u_{ijll} + F^{ij} (2 u_{li}u_{lj} + 2 u_l
   u_{ijl}) - C \sum_i F^{ii} (1 + |\hess u|).
  $$
Denote $I= F^{ij}u_{ijll}$ and $II = F^{ij} (2 u_{li}u_{lj} + 2
u_l u_{ijl}).$ For I, using the formula of
 $W_{ij,ll},$
  $$
  I \geq  F^{ij} W_{ij,ll}+ F^{ij}( - 2 u_{li}u_{lj}- 2 u_{lli} u_j +
  (u_{lk} ^2 + u_k u_{llk}) g_{ij})- C \sum_i F^{ii} (1 + |\hess
  u|).
  $$
 Now replacing $u_{lli}$ and $u_{llk}$ by (\ref{e:star''})
 produces
  \begin{eqnarray*}
   I &\geq& F^{ij}W_{ij,ll} + F^{ij}( - 2 u_{li}u_{lj} -2 u_j (-2 u_l u_{li}
    - \frac{\eta_i}{\eta} K - a K \gd_{in})\\
    & &  + (|\hess u|^2 + u_k (-2 u_l u_{lk} - \frac{\eta_k}{\eta} K - a K \gd_{kn}))
   g_{ij}) - C \sum_i F^{ii} (1 + |\hess
  u|).
  \end{eqnarray*}
 By (\ref{i:grad3}) and the conditions on $\eta,$ we have
   $$
   I \geq F^{ij}W_{ij,ll} + F^{ij}( - 2 u_{li}u_{lj} + 4 u_j u_l u_{li}
     +(|\hess u|^2 - 2 u_k u_l u_{lk}) g_{ij})
    - C \sum_i F^{ii}\eta^{- \frac{1}{2}}( 1 + |\hess u|^{\frac{3}{2}}).
   $$
 For II, we use the formula of
 $W_{ij,l}$  to obtain
  \begin{eqnarray*}
  II &\geq& F^{ij} (2 u_{li}u_{lj}+ 2 u_l W_{ij,l} - 4 u_i
   u_{jl} u_l + 2 u_k u_{kl} u_l g_{ij})
   - C \sum_i F^{ii} (1+ |\hess u|^{\frac{1}{2}}).
  \end{eqnarray*}

 Combining I and II together and after canceling out  six terms,
    \begin{eqnarray*}
  F^{ij} K_{ij} &\geq& F^{ij}W_{ij,ll} + F^{ij}|\hess u|^2  g_{ij}
    + 2 F^{ij}  u_l W_{ij,l}
   - C \sum_i F^{ii} \eta^{-\frac{1}{2}}( 1 + |\hess u|^{\frac{3}{2}}).
  \end{eqnarray*}
Now returning to (\ref{i:b'}),  applying $\eta$ on both sides and
by the concavity of $F,$
  \begin{eqnarray*}
 0 &\geq&  \eta^2 \sum_i F^{ii}|\hess u|^2 + \eta^2 (f(x,u))_{ll}
    + 2 \eta^2 u_l (f (x,u))_l
    - C \sum_i F^{ii} ( 1 + \eta^{\frac{3}{2}} |\hess
    u|^{\frac{3}{2}})\\
   &\geq& \sum_i F^{ii} ( \eta^2 |\hess u|^2   - C - C \eta |\hess u|- C \eta^{\frac{3}{2}} |\hess
   u|^{\frac{3}{2}}).
  \end{eqnarray*}
 This gives $(\eta |\hess u|)(x_0) \leq C.$ Hence, for $x \in
\overline{B}^+_{\frac{r}{2}}$ we have $\Delta u$ and  $|\gra u|$ are bounded.

 (3) For the Hessian bounds, it
 follows   that if $\Gamma^+_2 \subset \Gamma,$
 then $|u_{ij}| \leq C \Delta u.$
\end{proof}

  Department of Mathematics, University of California, Berkeley, CA
 \par
 Email address: \textsf{sophie@math.berkeley.edu}

 Current address:
 
 Institute for Advanced Study, Princeton, NJ
  \par
 Email address: \textsf{sophie@math.ias.edu}

\end{document}